%
%
%
\documentclass[12pt]{amsart}
\usepackage[english]{babel}
\usepackage[latin1]{inputenc}
\usepackage{hyperref}
 \usepackage{stmaryrd}
\usepackage{amssymb,amsfonts,amsmath,amsthm}
\usepackage{graphicx}
\usepackage[all]{xy}
\usepackage{enumerate}

\setlength{\oddsidemargin}{5pt}
\setlength{\evensidemargin}{5pt}
\setlength{\textwidth}{460pt}
\setlength{\textheight}{650pt}
\setlength{\topmargin}{-20pt}
\usepackage{xcolor}
\newcommand{\SortNoop}[1]{}


\newcommand{\pt}{\forall}

\newcommand{\plonge}{\hookrightarrow}
\newcommand{\then}{\Rightarrow}

\newcommand{\laction}{\curvearrowright}
\newcommand{\ol}[1]{\overline{#1}}

\newcommand{\mc}[1]{\mathcal{#1}}
\newcommand{\wh}[1]{\widehat{#1}}

\newcommand{\lie}[1]{\mathfrak{#1}}
\newcommand{\ad}{\mathrm{ad}}

\newcommand{\Out}{\mathrm{Out}}
\newcommand{\Aut}{\mathrm{Aut}}
\newcommand{\Ad}{\mathrm{Ad}}

\newcommand{\benum}{\begin{enumerate}}
\newcommand{\eenum}{\end{enumerate}}

\newcommand{\R}{\mathbb{R}}

\newcommand{\C}{\mathbb{C}}
\newcommand{\Z}{\mathbb{Z}}

\newcommand{\Gtheta}{G_{\theta}}
\newcommand{\Ttheta}{T_{\theta}}

\newcommand{\su}{\mathfrak{su}}

\newcommand{\so}{\mathfrak{so}}

\newcommand{\Hom}{\mathrm{Hom}}
\newcommand{\Ker}{\mbox{Ker}}

\newcommand{\SL}{\mathrm{SL}}
\newcommand{\SU}{\mathrm{SU}}

\newcommand{\GL}{\mathrm{GL}}
\newcommand{\SO}{\mathrm{SO}}

\newcommand{\g}{\mathfrak{g}}
\newcommand{\h}{\mathfrak{h}}

\newcommand{\lp}{\mathfrak{p}}
\newcommand{\la}{\mathfrak{a}}
\newcommand{\laC}{\mathfrak{a}}

\newcommand{\lar}{\mathfrak{a}_{reg}}

\newcommand{\z}{\mathfrak{z}}
\newcommand{\m}{\mathfrak{m}}

\newcommand{\gr}{\g_{reg}}
\newcommand{\tgr}{\widehat{\g}_{reg}}

\newcommand{\mr}{{\m}_{reg}}
\newcommand{\mrs}{{\m}_{reg,ss}}
\newcommand{\lb}{\mathfrak{b}}

\newcommand{\lt}{\mathfrak{t}}
\newcommand{\lu}{\mathfrak{u}}
\newcommand{\lc}{\mathfrak{c}}
\newcommand{\ld}{\lie{d}}

 \newcommand{\p}{\mathfrak{p}}



\newcommand{\shitc}{\left[h\right]_{L,\C}}
\newcommand{\shit}{\left[h\right]}
\newcommand{\ashit}{\left[h_{abs}\right]}
\newcommand{\lshitc}{\left[\chi\right]_\C}
\newcommand{\lshit}{\left[\chi\right]}


\newcommand{\Xet}{X_{\acute{e}t}}
\newcommand{\XET}{(Sch/X)_{\acute{e}t}}
\newcommand{\CET}{(Sch/\C)_{\acute{e}t}}
\newcommand{\Higgs}{\mathbf{Higgs}}
\newcommand{\Higgsr}{\mathbf{Higgs}_{reg}}
\newcommand{\Higgsrt}{\mathbf{Higgs}_{reg,\theta}}
\newcommand{\aHiggs}{\mc{H}iggs}

\newcommand{\stack}[1]{\left[#1\right]}

\newcommand{\Puniv}{P^{univ}}
\newcommand{\guniv}{\gamma^{univ}}
\newcommand{\buniv}{\beta^{univ}}
\newcommand{\Cov}{\mc{C}ov}

\newcommand{\NH}{N_H(\la)}
\newcommand{\Ntheta}{N_{\Gtheta}(\la)}
\newcommand{\CH}{C_H(\la)}
\newcommand{\GT}{\overline{G/T}}
\newcommand{\GN}{\overline{G/N}}

\newcommand{\HC}{\overline{H/C_H(\la)}}
\newcommand{\HN}{\overline{H/N_H(\la)}}
\newcommand{\specc}{\widehat{X}}
\newcommand{\spec}{\widehat{X}}

\newcommand{\specu}{\widehat{U}}

\newtheorem{thm}{Theorem}[section]
\newtheorem{prop}[thm]{Proposition}
\newtheorem{cor}[thm]{Corollary}
\newtheorem{lm}[thm]{Lemma}
\theoremstyle{definition}

\newtheorem{ex}[thm]{Example}
\newtheorem{defi}[thm]{Definition}
\newtheorem{fact}[thm]{Fact}

\newtheorem*{thm*}{Theorem}
\theoremstyle{remark}
\newtheorem{rk}[thm]{Remark}
\begin{document}
\title[Higgs bundles, abelian gerbes and cameral data]
{Higgs bundles, abelian gerbes\\ and cameral data}
\author[Oscar Garc{\'\i}a-Prada]{Oscar Garc{\'\i}a-Prada}
\address{Instituto de Ciencias Matem\'aticas \\ 
  Campus de Cantoblanco
  \\C/Nicol\'as Cabrera 13--15 \\ 28049 Madrid \\ Spain}
\email{oscar.garcia-prada@uam.es}
\author[Ana Pe{\'o}n-Nieto]{Ana Pe{\'o}n-Nieto}
\address{Universit\'e de Gen\`eve\\
Section de Math\'ematiques\\
Route de Drize 7\\ 1227 Carouge\\ Switzerland}
\email{ana.peon-nieto@unige.ch}
\thanks{
The work of the first  author was partially supported by the Spanish MINECO under ICMAT
Severo Ochoa project No. SEV-2015-0554, and under grant No. MTM2013-43963-P.
The second author was supported by the FPU grant from Ministerio de 
Educaci{\'o}n.
}

\date{16/02/2019}
\begin{abstract}
We study the Hitchin map for $G_{\R}$-Higgs bundles on a smooth curve, where $G_{\R}$ is a quasi-split real form of a complex reductive algebraic group $G$. By looking at the moduli stack of regular $G_{\R}$-Higgs bundles,
we prove
it induces a banded gerbe structure on a slightly larger stack, whose band is given by sheaves of tori. This characterization
yields a cocyclic description of the fibres of the corresponding Hitchin map by means of cameral data. According to this, fibres of the Hitchin map are categories of principal torus bundles on the cameral cover. The corresponding points inside the stack of $G$-Higgs bundles are contained in the substack of points fixed by an involution induced by the Cartan involution of $G_{\R}$.  We determine this substack of fixed points and prove that stable points are in correspondence with stable $G_\R$-Higgs bundles. 
\end{abstract}
\maketitle
\tableofcontents
\section{Introduction}

Higgs bundles over a compact Riemann surface for complex reductive Lie groups were introduced by Hitchin in \cite{SDE}, and have since been object of intensive study, due to the rich geometry of their moduli spaces. A particularly interesting aspect is the Hitchin integrable system. This is a fibration by complex Lagrangian tori over the so called Hitchin base \cite{Duke, SimpsonMod2, Scog, Schaub, DG, NgoLemme}. This process, called abelianization, has proven useful in the study of  the geometry of moduli spaces \cite{Duke}. It constituted moreover one of the hints that led to the link with  mirror symmetry \cite{HT,DP}. 

Higgs bundles for real Lie groups arise naturally via the non abelian Hodge correspondence, which establishes a homeomorphism of the moduli space of Higgs bundles with the moduli space of representations of the fundamental group.
Higgs bundles provide, as already ilustrated by Hitchin in  \cite{SDE} a very powerful tool  to study the geometry and topology of 
the moduli spaces of representations. In particular, this approach has been very useful in identifying and studying higher Teichm\"uller 
spaces (see \cite{OscarHigher} for a survey).  

It is interesting to note that Higgs bundles for real Lie groups generalize the complex group case (as a complex group is a real group with extra structure), but they also arise as fixed points of Higgs bundles for complex groups via involutions \cite{GRInvol}. We will consider both aspects in different sections of this article.

A Hitchin map  can be defined for Higgs bundles for real groups. The goal of this paper is to study  this map for 
 quasi-split real groups. Simple quasi-split real groups include split forms and groups whose Lie algebra is $\su(n,n)$, $\su(n,n+1)$, $\so(n,n+2)$, and $\mathfrak{e}_{6(2)}$.  It is well known that any real form is isomorphic via inner equivalence to a quasi-split group. From Cartan's point of view, the class of any outer automorphism of order two contains a quasi-split real form. It turns out that in this quasi-split situation the Hitchin fibration can be abelianized \cite{Tesis}.

The Hitchin map fibres the non compact moduli spaces of Higgs bundles onto an affine space, the Hitchin base. As a result, it allows to split the moduli space into non compact subspaces (sections isomorphic to the base \cite{Teich,HKR}) and compact dimensions (the fibres). Abelianization is certainly important from the gauge-theoretic point of view, as it transforms the objects of study into simpler ones. Here, abelianization should be understood in a broader sense. Even for quasi-split groups, in general,  the fibres will not be abelian varieties (exceptions to this are split groups and $\SU(p,p)$). This is due to the fact that they are contained inside the compactified Jacobians of non-smooth curves (the so called spectral and cameral covers) \cite{SimpsonMod2,Schaub,DG}.  However, generic points of the fibres do define abelian groups. In terms of the stack, the Hitchin fibration of the regular locus defines an abelian gerbe; in particular, the fibres are categories of tori over cameral covers.  In order to study them, we use the cameral techniques introduced by Donagi--Gaitsgory \cite{DG}, and follow Ng\^o's formulation \cite{NgoLemme}. The spectral data for some quasi-split real form has been formerly studied by different authors (see \cite{Emi, SUpp+1, SchapTesis, SchapUpp}).

Let $G_\R$ be a {real reductive Lie group}. Following Knapp  \cite[\S VII.2]{K}, by 
this we mean a tuple
$(G_\R,H_\R,\theta,\langle\,\cdot\,,\,\cdot\,\rangle)$, where $H_\R \subset G_\R$ is a maximal compact subgroup, 
$\theta\colon \g
\to \g$ is a Cartan involution and $\langle\,\cdot\,,\,\cdot\,\rangle$ is a
non-degenerate bilinear form on $\g_\R$, which is $\Ad(G_\R)$-
and $\theta$-invariant, satisfying natural compatibility conditions. We will also need the notion of a
real strongly reductive Lie group (see Definition \ref{def:strongly_red}). Let  $\theta$ be the Cartan involution, with associated  Cartan decomposition
\begin{displaymath}
\g_\R = \h_\R \oplus \m_\R
\end{displaymath}
where  $\h_\R$ is the Lie
algebra of $H_\R$. The group $H:=H_\R^\C$ acts on $\m:=\m_\R^\C$ through the  isotropy representation.

Let $X$ be a compact Riemann surface and $L$ be a holomorphic line bundle 
over $X$. An $L$-twisted $G_\R$-Higgs bundle on $X$ is a pair
$(E,\varphi)$, where $E$ is a holomorphic principal $H$-bundle
over $X$ and $\varphi$ is a holomorphic section of $E(\m)\otimes L$,  
where $E(\m)= E \times_{H}\m$ is the $\m$-bundle associated to 
$E$ via the isotropy representation.  
The section $\varphi$ is called the  Higgs field.  
When $L$ is the canonical line bundle $K$ of $X$ we obtain the  familiar
theory of $G_\R$-Higgs bundles. When $G_\R$ is compact the Higgs field is
identically zero and a $L$-twisted $G_\R$-Higgs bundle is simply a
principal $G$-bundle, where $G:=G_\R^\C$. When $G_\R$ is complex $G_\R=H$ and the isotropy
representation coincides with the adjoint representation of $G_\R$.
This is the situation originally considered by Hitchin in \cite{SDE,Duke},
for $L=K$. 

Let $\mc{M}(G_\R)$ the  moduli space of isomorphism classes of  polystable $L$-twisted  $G_\R$-Higgs bundles. By considering a basis of homogeneous polynomials  $p_1,\dots,p_a\in\C[\m]^H$ (where $a$ is the rank  of $G_\R$ and $\deg(p_i)=d_i$), one obtains the Hitchin map 
$$
h:\mc{M}(G_\R)\to \mc{A}_L(G_\R)
$$
defined by evaluating $p_1,\dots, p_a$  on the Higgs field (see Section \ref{sec:preliminaries} for a more intrinsic definition and Remark \ref{rk:usual_defi_Hitchin_map} for the relation between both). In the above
$ \mc{A}_L(G_\R)\cong\bigoplus_{i=1}^a H^0(X,L^{d_i})$ is the Hitchin base. This construction also yields a stacky version of the Hitchin map $[h]_L$ on the stack of $G_\R$-Higgs bundles  $\Higgs(G_\R)$. 

In this paper we study the morphism $[h]_L$ after imposing a regularity condition on the Higgs field. Namely, we assume that $\phi(x)$ has a maximal dimensional isotropy orbit for every $x\in X$. We find that the Hitchin map defines a gerbe structure once the automorphisms have been extended (yielding a stack $\Higgsrt(G_\R)$). This means that locally on the Hitchin base, the Hitchin fibration can be identified with the classifying stack for some group. Moreover, the existence of a section \cite{HKR} proves that this is globally the case.

We describe the fibres in terms of cameral techniques for quasi-split real groups, that is, we identify them with categories of tori over cameral covers. We also study the action of the natural algebraic involution induced by $\theta$ on the stack $\Higgsr(G)$, where $G=G_\R^\C$. We describe the fixed points, finding in Corollary \ref{cor:split}  that for split groups these are order two points (see also \cite{SchapTesis}). Moreover, we prove that stable points are precisely points of $\Higgsr(G_\R)$.

 In Section \ref{sec:preliminaries} we recall the preliminaries of Higgs bundles and the Hitchin map associated to real reductive algebraic groups. We put the theory  in the suitable context of stacks, and explain how the usual notions for complex groups arise naturally in this way (cf. Remark \ref{rk:complex_Higgs_are_Higgs}).
 
  Section \ref{section complex case} reminds abelianization as per  Donagi and Gaitsgory \cite{D93,Scog,DG}, which identify and open subset of the Hitchin system for complex groups with a gerbe over the Hitchin base, and the fibres  as categories of principal torus bundles over cameral covers (cf. Theorem \ref{thm DG cameral data}). We also explain how their language compares to Ng\^o's \cite{NgoLemme}.
  
  Section \ref{sec:local_Higgs} studies a local model for the Hitchin map for quasi-split real groups. In contrast with the complex group case, in order to obtain a gerbe (first step towards a description in terms of categories of principal bundles) we need to modify either the Hitchin base (by substituting it by the orbit space of the isotropy representation, see Section \ref{sec:GR_Higgs}) or the automorphism group (by enlarging it so the Hitchin base becomes an orbit space under the suitable extension of the isotropy action, see Section \ref{sec:NGR_Higgs}).  The result of the former is a gerbe over a non separated algebraic space (see Lemma \ref{lm gerbe 1}). Regarding the  latter, we study the  local stack  $\stack{\mr/N_G(G_\R)}$, where $\mr\subset\m$ are the regular points and $N_G(G_\R)$ is the normaliser of $G_\R$ inside its complexification $G$ (which exists by algebricity of $G_\R$). This substack of the local stack of $G$-Higgs bundles $\stack{\gr/G}$ is a minimal gerbe containing the local stack of $G_\R$-Higgs bundles $\stack{\m/H}$ (see Remark \ref{rk:essential_image}). It admits a natural involution whose fixed point substack contains $\stack{\m/H}$ (see Proposition \ref{prop local gerbe}). Once the gerby structure has been proven, we move on to characterise the band of the gerbe (that is, the automorphism group of objects in the fibre, see Remark \ref{rk:band}) in terms of schemes of tori, namely, we identify schemes of regular centralisers with torus schemes. Section \ref{sec:ss_local_band} studies the case of semisimple Higgs bundles, case in which the fibres are easily identified with categories of tori over \'etale local cameral covers. See Corollary \ref{cor:local_hitchin_fibres}. The following section extends these results to ramified  local cameral covers using the results of \cite{DG,NgoLemme}, which yields the main result of this section (Theorem \ref{thm Jtheta=Ttheta}).
  
  Section \ref{sec:global} studies the structure of the Hitchin map based on the local Hitchin map. We deduce that the stack of $G_\R$ and $N_G(G_\R)$-Higgs bundles is a gerbe over a suitable space (Theorem \ref{gerbe Higgs bundles}), which is moreover neutral when the degree of the bundle is even. The alternative description of the band from Theorem \ref{thm Jtheta=Ttheta} is used to obtain a cocyclic description of the fibres of the Hitchin map for $N_G(G_\R)$-Higgs bundles in terms of cameral data, that is, principal torus bundles over cameral covers (Theorem \ref{thm fibres hm for Gtheta}). Finally, we characterise the cameral data of fixed points under the natural involution on the stack associated with $G_\R$ in Theorem \ref{thm:cam_data_fixed_pts} and prove that stable fixed points are precisely stable $G_\R$-Higgs bundles (cf. Proposition \ref{prop:stable_fixed_points}). 
  
Two appendices gather the most relevant elements on Lie theory (Appendix \ref{sec:Lie_th}) and the geometry of regular centralisers (Appendix \ref{sec:reg_central}). Some of the results therein are original and appeared in \cite{HKR,Tesis}. 

We finish this introduction with some remarks concerning higher dimensional schemes, analytic categories, a more abstract approach \`a la Donagi-Gaitsgory, and the non abelianizable case.

The formalism developed in this article can be applied to higher dimensional 
schemes. However, as explained in \cite{DG}, for dimension higher than $2$ there may not exist any regular
Higgs fields, as the subset of a scheme $S$ on which a Higgs field is not regular is expected to have codimension $3$.

Moreover, we work with the \'etale topology. However, most of the results explained are valid in the setting of the analytic site, as long as $X$ is smooth compact and projective.

Regarding the parallelism with Donagi and Gaitsgory's work,  the notion of abstract $G_\R$-Higgs bundles also makes sense in this setting. Such an object is given by a pair $(E,\sigma)$ where $E$ is an $H$-principal bundle and $\sigma:E\to\HN$ is an $H$-equivariant section to a scheme $\HN$ parametrising the centralisers of elements in  $\mr$ (see Appendix \ref{sec:reg_central} for details).
Here $\la$ is a maximal abelian subalgebra of $\m$.
 One can work out the whole picture in this setting, and obtain that abstract $N_G(G_\R)$-Higgs bundles are a gerbe over the appropriate substack of cameral covers. Most details can be found in \cite{Tesis}; it would be interesting to give an intrinsic description of the precise substack of cameral covers,  in the spirit of Remark \ref{rk:cam_cov+inv}. 

In this article we restrict attention to quasi-split real groups. We know by \cite[Theorem 4.3.13]{Tesis}, that the Hitchin fibration  for any other real form 
can not be   abelianized. This is a consequence of regular centralisers not being abelian for any other real groups. See  \cite[\S 4.3.1.2]{Tesis} for more detailed comments on the existence of cameral data in other cases. This 
phenomenon was observed for some non-split  classical real  groups in 
\cite{Emi,HSchap}.

\section*{Acknowledments} The authors thank L. \'Alvarez-C\'onsul, F. Beck, R. Donagi, J. Heinloth, C. Pauly, C. Simpson and especially, T. Pantev, for comments and discussions on the topics of this paper.
\section{Higgs bundles and the Hitchin map}\label{sec:preliminaries}
Let $G_\R$ be a real algebraic group. We will say it is {\bf strongly reductive} if it satisfies Knapp's definition of reductivity \cite[\S VII.2]{K}. See Definition \ref{def:strongly_red}. Namely, the group comes endowed with extra data $(G_\R, H_\R,\theta, \langle \,\cdot \,,\,\cdot \,\rangle)$ where 
$H_{\R}\leq G_{\R}$ is a maximal compact subgroup, $\theta$ is the Cartan involution on the Lie algebra $\g_\R$ of $G_\R$, and $ \langle \,\cdot \,,\,\cdot \,\rangle$ is a $\theta$-invariant non degenerate bilinear form on $\g_\R$. These data satisfy conditions $i)-v)$ on \cite[page 446]{K}. Consider the corresponding Cartan decomposition of the Lie algebra 
\begin{equation}\label{eq Cartan dec}
\g_{\R}=\h_{\R}\oplus\m_{\R}.
\end{equation}
Then, $\theta$ is identically $+1$ on $\h_{\R}$ and $-1$ on $\m_{\R}$. Restriction of the adjoint action of $\Ad_{\g_\R}:G_\R\to\Aut(\g_\R)$ induces an action of $H_\R$ on $\m_\R$ whose complexification $\iota: H\to\Aut(\m)$ is called the \textbf{isotropy representation}. Here $H:=H_\R^\C$, $\m:=\m_\R^\C$ denote the complexifications and likewise for any other real group or (vector subspace of a) Lie algebra. 

Note that $\g_\R=(\g_\R\otimes \C)^\sigma$ for $\sigma$ the involution given by $(X,Y)\mapsto (-Y,X)$ on $\g_\R\otimes\C\cong_\R\g\oplus\g$. Given a $\sigma$-invariant complex vector subspace $V\subset\g$, we will denote by $V_\R:=V\cap\g_\R=V^\sigma$.

\begin{rk}\label{rk:complex_red_is_red}
Particular examples of strongly reductive real Lie groups include: complex connected reductive Lie groups $G$, or their real forms $G_\R<G$. The latter are fixed point subgroups under an antiholomorphic involution $\sigma$. One can prove the existence of a compact subgroup $U\leq G$ with defining involution 
$\tau$ such that  $\sigma\tau$ is a holomorphic involution on $G$ whose differential restricts to $\theta$ on $\g_{\R}$. By abuse of notation, we denote $\theta:=\sigma\tau$. With this, $H=G^\theta$ by  \cite[Proposition 7.21]{K}.

Note also that complex reductive groups $G$ appear as real forms of $G\times G$ with associated holomorphic involution $(g,h)\mapsto (h,g)$. 

According to this, a complex reductive Lie group can be seen as a particular case of a real strongly reductive group or of a real form.
\end{rk}

Given a smooth complex projective curve
$X$, we denote by $\Xet$ the small \'etale site of $X$, and $\XET$ the big \'etale site. Recall \cite{Vistoli} that a site is a category endowed with a notion of covering satisfying some axioms. In our case, the
category underlying $\Xet$ has as objects \'etale morphisms 
$S\to X$, and as arrows, morphisms over $X$. As for $\XET$, the underlying category are schemes over $X$, with arrows morphisms over $X$. In both cases, the coverings of the Grothendieck topology are collections of \'etale morphismos $\{U_i\to S\}_{i\in I}$ for $S\in\Xet/\XET$ which are jointly surjective, that is, such that $\bigsqcup_{i\in I}U_i\to S$ is surjective. 

Now, let $L\to X$ be an \'etale line bundle, that we will assume to be of degree at least $2g-2$, and if $\deg L=2g-2$ then $L$ is assumed to be the canonical bundle $K$ of $X$.

Note that $L$ is also locally trivial in the analytic topology, 
by Oka's Lemma and 
\cite[Theorem 1.9]{Steinberg}. 
\begin{defi}
	An $L$-twisted $G_{\R}$-Higgs bundle on $X$ is a pair $(E,\phi)$ where $E\to X$ is an \'etale
	principal 
	$H$-bundle and $\phi\in H^0(X,E(\m)\otimes L)$, where $E(\m)$ denotes the bundle associated to $E$ via the isotropy representation. The section $\phi$
	is called the \textbf{Higgs field}.
\end{defi}
We can reformulate the above as follows: on the sites $\Xet$ or $\XET$, we can consider the transformation stack $\Higgs(G_{\R}):=\stack{\m\otimes L/H}$. Recall that for a scheme $Y$ endowed with an action of an algebraic group $F$ the quotient stack $\stack{Y/F}$ parametrises principal $F$-bundles $P$ together with $F$-equivariant maps $P\to Y$. Thus $\Higgs(G_{\R})$
is the stack that to each scheme $f:S\to X$ associates the category of $f^*L$-twisted $G_{\R}$-Higgs bundles over $S$.
We have then the equivalent
\begin{defi}
	An $L$-twisted $G_{\R}$-Higgs bundle on $X$ is a section 
	$$\stack{(P,\phi)}: X\to\Higgs(G_{\R}).$$
\end{defi}
\begin{rk}\label{rk:moduli_stack}
	Strictly speaking, if we are interested in a moduli problem over $X$, we should instead consider
	the pushforward $s_*\Higgs(G_\R)$ by the structural morphism $s:X\to\mathrm{Spec}(\C)$; the sections of
	this stack over a complex scheme $S$ are Higgs bundles 
	over $X\times_\C S$. 
\end{rk}
Let $\la\subset \m$ be a maximal abelian subalgebra, and $W(\la)$ the restricted Weyl group. This can be defined as $W(\la)=N_H(\la)/Z_H(\la)$ where $N_H(\la)$ denotes the normalizer of $\la$ in $H$ and $Z_H(\la)$ its centraliser. The well known Chevalley restriction theorem proves that
$\C[\m]^H\cong\C[\la]^{W(\la)}$. In other words, the GIT quotient $\m\sslash H$ is isomorphic to the quotient $\la/W(\la)$. Thus, there exists a morphism, called the Chevalley morphism
\begin{equation}\label{eq Chevalley r}
\chi:\m\to\la/W(\la). 
\end{equation}

The map (\ref{eq Chevalley r})  is $\C^\times$-equivariant for the $\C^\times$ action on $\m$ as a vector space and the one on $\la/W(\la)$ induced 
by the action on the graded ring $\C[\la]^{W(\la)}$. Recall that the latter is a ring generated by homogeneous polynomials $p_i$ $i=1,\dots,a:=\dim\la$ of fixed degrees $d_i=\deg p_i$ which are invariants of the group. The action of $\C^\times$ on $\la/W(\la)$ is totally determined by $t\cdot p_i=t^{d_i}p_i$ where the RHS is the usual multiplication. Hence, it induces a morphism 
\begin{equation}\label{eq:chev_mL}
\m\otimes L\to(\la\otimes L)/W(\la).
\end{equation}
Moreover, \eqref{eq:chev_mL} is $H$-invariant, so we obtain 
$$
\shit_L:\stack{\m\otimes L/H}\to(\la\otimes L)/W(\la).
$$
\begin{defi} Let $\mc{A}(G_{\R})=tot(\la\otimes L/W(\la))$ be the total space of the bundle $\la\otimes L/W(\la)$. 
	The map 
	\begin{equation}\label{eq hitchin real}
	\shit_L: \stack{\m\otimes L/H}\to\la\otimes L/W(\la)  
	\end{equation}
	is called the \textbf{$L$-twisted Hitchin map} associated to $G_{\R}$. The scheme
	$\mc{A}(G_{\R})$ is called the \textbf{Hitchin base scheme} 
	(associated to $G_{\R}$).
\end{defi}
\begin{rk}\label{rk:complex_Higgs_are_Higgs}
	From Remark \ref{rk:complex_red_is_red} we may consider $\Higgs(G)$ for a complex reductive Lie group $G$. In these terms we recover the usual definition, as $\h=\m=\g$. In that case we have that $\la$ is a Cartan subalgebra of $\g$, $W(\la)$ is the associated Weyl group, and hence $\shit_L$ is the usual Hitchin map. 
\end{rk}
\begin{rk}\label{rk:usual_defi_Hitchin_map}
	Note that the choice of a basis of homogeneous generators $p_1,\dots, p_a$, $a=\mathrm{rk}\, G_\R$ of $\C[\m]^H$ induces an isomorphism $H^0(X, \la\otimes L/W(\la))\cong \bigoplus_{i=1}^aH^0(X,L^{d_i})$, where $d_i=\deg(p_i)$.
\end{rk}
\begin{rk} In relation to Remark \ref{rk:moduli_stack}, the condition on the degree of $L$ ensures representability of the Hitchin base scheme  $s_*\laC\otimes L/W$ by $H^0(X,\laC\otimes L/W(\la))$. Namely, for any complex scheme $S$, $S$-points of $s_*\laC\otimes L/W$ are simply $H^0(X,\laC\otimes L/W(\la))$.
\end{rk}
Note that the Hitchin map \eqref{eq hitchin real} can be defined in more generality without fixing the line bundle $L$. Following Ng\^o  \cite[\S2]{NgoLemme} we can consider the stack 
$
\stack{\m/H\times \C^\times}
$, which parametrizes pairs $(E\times P,\psi)$ where $E$ is a principal $H$ bundle, $P$ is a line bundle and $\psi:E\times P\to \m$ is an $H\times\C^\times$ equivariant morphism. The latter is equivalent to having an $H$-equivariant morphism  $\psi:E\to \m\otimes P$, so that by considering $\stack{\m/H\times \C^\times}$ we are parametrising all twists at once.

The same arguments as before imply that the Chevalley morphism induces 
\begin{equation}
\label{eq chevalley all line bundles}	
\stack{\tilde{\chi}}:\stack{\m/H\times \C^\times}\to\stack{\la/W(\la)/\C^\times}.
\end{equation}
Furthermore, by mapping each of the above stacks to $B\C^\times$ via the respective forgetful morphisms, 
we obtain
a commutative diagram:
$$
\xymatrix{
	\stack{\m/H\times \C^\times}\ar[rr]\ar[dr]&&\stack{\la/W(\la)/\C^\times}\ar[dl]\\
	&B\C^\times\times\la/W(\la)&
}
$$
where all the stacks above are seen as sheaves over the (small) \'etale site of $\la/W(\la)$.

Fixing a line bundle on $X$ is equivalent to considering a map $\stack{L}:X\to B\C^\times$; so one recovers $\shit$ by 
looking at the restriction of $\stack{\tilde{\chi}}$ to $\stack{L}:X\to B\C^\times$. This implies in particular that in order to study the Hitchin map (\ref{eq hitchin real}), we can restrict attention to the easier map (\ref{eq chevalley all line bundles}).
The advantage to this approach is that geometric questions reduce to Lie theoretic questions in this context, which are 
much easier to handle. Moreover, local triviality of bundles implies that at a local scale, the stack is essentially
the quotient $\stack{\m/H}$. So we will first study the local case before coming back to global Higgs bundles.
 
\section{The complex group case}\label{section complex case}
Before we turn to the general case, we recall in this section case of complex groups, which was studied by Donagi and Gaitsgory \cite{DG}. We also explain the reformulation by Ng\^o \cite{ NgoLemme}, and how both relate.

Let $G$ be a complex reductive algebraic group, and let $\g$ be its Lie algebra. Let $X$, $\Xet$, $\XET$ and $L$ be as in Section \ref{sec:preliminaries}.

 As explained there, the classifying stack of $L$-twisted $G$-Higgs bundles over $X$ is 
the quotient stack $\Higgs(G)=\left[\g\otimes L/G\right]$ over $\Xet/\XET$. 

As explained in Remark \ref{rk:complex_Higgs_are_Higgs},  the Chevalley morphism \eqref{eq Chevalley r} induces the Hitchin map
\begin{equation}\label{eq sHitchin}
 \shitc:\Higgs(G)\to\lt\otimes L/W(\lt)
\end{equation}
where $\lt\subset \g$ is a  Cartan subalgebra  with associated Weyl group $W(\lt)$.

Let $\gr\subset\g$ be the subset of regular elements (i.e., elements with maximal dimensional adjoint orbits). We can consider
the open substack 
$$\Higgsr(G)\subset\Higgs(G),$$
consisting of Higgs bundles whose Higgs field is everywhere regular, that is, its pointwise centraliser is minimal dimensional (of dimension $\mathrm{rk}\,G$). 

A more general object, whose relation to $\Higgsr(G)$ we explore in what follows, was studied by Donagi and Gaitsgory \cite{DG}. They consider the stack $\aHiggs(G)$ of \textbf{abstract Higgs bundles}. To explain this,  consider 
$\GN\subset Gr(r,\g)$ the smooth
scheme parameterising regular centralisers, namely, its points are centralisers of regular elements of $\g$, cf. \cite[Proposition 1.3]{DG}.  In the notation, $N:=N_{G}(T)$ is the normaliser of the maximal torus  $T$  
corresponding to $\lt$. The scheme $\GN$ is a partial compactification of the quotient $G/N$ (which parameterises Cartan
subalgebras). Then $\aHiggs(G)$
parameterises pairs $(E,\sigma)$ of a 
principal $G$-bundle $E\to X$ and a $G$-equivariant map $\sigma: E\to \GN$. With this, $\aHiggs(G)\cong\left[\GN/G\right]$.

 Note that there is a morphism of stacks
\begin{equation}\label{eq:morph_pairs_abstract}
\mathcal{C}: \Higgsr(G)\to\aHiggs(G)
\end{equation}
sending a pair $(E,\phi)\mapsto (E,\sigma_\phi)$, where $\sigma_\phi: E\to\GN$ associates to $p\mapsto \lc_{\g}(\phi(p))$.  Here,
we consider the Higgs field as a $G$-equivariant map $\phi:E\to \g\otimes L$. The map $\sigma_\phi$ is well defined, as for any $\lambda\in\C^\times$ $\lc_{\g}(\lambda\phi(p))=\lc_{\g}(\phi(p))$.

\begin{rk}In fact, the morphism $\mathcal{C}$ is well defined for Higgs bundles in $\Higgs(G)$ with generically regular Higgs field (cf. \cite[\S17.4]{DG}), but for the purposes of this paper we will stick to everywhere regular objects.
\end{rk}
A \textbf{cameral cover} of $X$ is a $W$-covering $\spec\to X$ which is \'etale locally a pullback of $\lt\to\lt/W$. Note that to
any $L$-twisted Higgs bundle $(E,\phi)$, it corresponds a section $b(E,\phi)\in H^0(X,\lt\otimes L/W)$ obtained by composing
$$
b(E,\phi)=\shitc\circ [(E,\phi)]:X\to\lt\otimes L/ W.
$$
where $(E,\phi)$ is seen as a morphism
$$
[(E,\phi)]:X\to\Higgsr(G).
$$

In this way, we assign to
$(E,\phi)$ the cameral cover defined by the Cartesian diagram
\begin{equation}\label{eq complex cameral}
 \xymatrix{
 \widehat{X}_b\ar[r]\ar[d]&{\lt}\otimes L\ar[d]_{\pi}\\\
 X\ar[r]_-{b(E,\phi)}&{\lt}\otimes L/W.
 }
\end{equation}
An abstract Higgs bundle $[(E,\sigma)]:X\to\aHiggs(G)$ also induces a W-Galois ramified cover. 
To see this, one defines a  ramified 
$W$-Galois cover $\GT\to\GN$, where $\GT$ is the incidence variety inside $\GN\times G/B$ for a given Borel subgroup $B$ containing $T$ (see \cite[Proposition 1.5]{DG}). 
An {\bf abstract cameral cover} is a $W$-Galois ramified cover locally isomorphic to $\GT\to\GN$.  These are classified by the stack $\Cov$  of { abstract cameral covers}. 

We may define an  \textbf{abstract Hitchin map}
\begin{equation}\label{eq abst shit}
 \ashit:\aHiggs(G)\to \Cov
\end{equation}
as follows. Any abstract Higgs bundle $(E,\sigma)$ induces the abstract cameral cover 
$\hat{E}=E\times_{\GN}\GT\to E$. A descent argument allows to prove that it comes from a unique $W$-cover $\specc\to X$, the {abstract cameral cover} associated to $(E,\sigma)$. 

Note  here that there is a morphism
\begin{equation}\label{eq:cam_cov_are_abs_cam_cov}
	\mc{C}':\mc{A}_L(G)\longrightarrow\mc{C}ov
	\end{equation}
	due to the following fact (cf. \cite[Proposition 1.5]{DG}).
\begin{fact}\label{fact:springer_res}
Consider the Grothendieck--Springer resolution $\tgr$ of $\gr$ (note that the Grothendieck--Springer resolution restricts to the regular locus), i.e.,  the $W$-cover obtained by
pullback 
\begin{equation}\label{eq GS resolution}
 \xymatrix{
 \tgr\ar[r]\ar[d]&{\lt}\ar[d]\\
 \gr\ar[r]&{\lt}/W.
 }
\end{equation}
Let $c:\gr\to\GN$ be the map sending  each element to its centraliser. Then  \cite[Proposition 10.3]{DG}
$$
\gr\times_{\GN}\GT\cong \tgr
$$
over $\gr$.
\end{fact}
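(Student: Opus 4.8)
The plan is to compare both covers through the standard incidence description of the Grothendieck--Springer resolution. Using the classical identification recorded in the statement, I regard $\tgr=\gr\times_{\lt/W}\lt$ as the variety of pairs
$$\tgr\cong\{(x,\lb)\in\gr\times G/B:\ x\in\lb\},$$
i.e. a regular element together with a Borel subalgebra through it, an open subscheme of the smooth resolution $\wt\g=G\times_B\lb$. The whole problem then reduces to matching this with the fibre product $\gr\times_{\GN}\GT$, all maps being taken over $\gr$.

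First I would unwind the fibre product. Since $c\colon\gr\to\GN$ sends $x\mapsto\lc_\g(x)$ and the projection $\GT\to\GN$ has fibre over a regular centraliser $\lc$ the set of Borels containing $\lc$ (the incidence condition cutting out $\GT\subseteq\GN\times G/B$, cf. \cite[Proposition 1.5]{DG}), one obtains
$$\gr\times_{\GN}\GT\cong\{(x,\lb)\in\gr\times G/B:\ \lc_\g(x)\subseteq\lb\}.$$
As $\GT\hookrightarrow\GN\times G/B$ is closed, this is a closed subscheme of $\gr\times G/B$; and since $x\in\lc_\g(x)$, the condition $\lc_\g(x)\subseteq\lb$ implies $x\in\lb$, so there is a canonical closed immersion $\gr\times_{\GN}\GT\hookrightarrow\tgr$ over $\gr$.

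The heart of the matter is the reverse containment on points: for $x\in\gr$ and a Borel $\lb$,
$$x\in\lb\ \Longleftrightarrow\ \lc_\g(x)\subseteq\lb.$$
For the nontrivial implication I would use the Jordan decomposition $x=s+n$, noting $s,n\in\lb$ because $\lb$ is algebraic, together with $\lc_\g(x)=\lc_{\lie l}(n)$ for $\lie l=\lc_\g(s)$ (both $\ad s$ and $\ad n$ are polynomials in $\ad x$). Regularity of $x$ forces $n$ to be a regular nilpotent of the reductive algebra $\lie l$; such an element lies in a \emph{unique} Borel of $\lie l$, which is therefore $\lb\cap\lie l$, and the centraliser of a regular nilpotent is contained in that Borel. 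Hence $\lc_\g(x)=\lc_{\lie l}(n)\subseteq\lb\cap\lie l\subseteq\lb$. This step, where the regularity hypothesis and the fine structure of centralisers really enter, is the one I expect to be the main obstacle.

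It remains to upgrade the pointwise bijection to an isomorphism of schemes. The canonical map is a closed immersion $\gr\times_{\GN}\GT\hookrightarrow\tgr$ that is bijective on points; since $\tgr$ is reduced (indeed smooth, being open in $\wt\g$), a bijective closed immersion into it is an isomorphism. This yields $\gr\times_{\GN}\GT\cong\tgr$ over $\gr$, as claimed. The only remaining verifications are the flatness and incidence properties of $\GT\to\GN$, which are exactly those established in \cite[Propositions 1.3, 1.5]{DG}.
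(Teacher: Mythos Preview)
The paper does not give its own proof of this fact: it is stated as a citation of \cite[Proposition 10.3]{DG} and used as a black box. Your argument is correct and is essentially the standard one---identify both sides as incidence varieties inside $\gr\times G/B$ and reduce to the Lie-theoretic statement that for regular $x$ one has $x\in\lb\iff\lc_\g(x)\subseteq\lb$, which you handle via Jordan decomposition and the uniqueness of the Borel through a regular nilpotent; this is in the spirit of the proof in \cite{DG}.
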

Moreover, $\stack{h_{abs}}\circ\mc{C}=\mc{C}'\circ\shit_L$. So we will henceforth refer to both kinds of covers (namely, locally isomorphic to $\GT\to\GN$ or to $\lt\to\lt/W$) as cameral covers.
\begin{rk}\label{rk:cam_cover_im_C}
	Note that over the image of $\mc{C}$, the construction of a cameral cover is easier. Indeed, let $(E,\sigma)=\mc{C}(E,\phi)$. Then, the datum of $\sigma$ is equivalent to a section $\lc(\phi)$ of $E(\GN)$. The cameral cover $\hat{X}$ is the fibreed product  $X\times_{E(\GN)}{E}(\GT)$.
\end{rk}
Over $\mc{C}ov$ one may consider the following sheaf of groups:
\begin{equation}\label{eq:mT}
\mc{T}({S})=\left\{s:\widehat{S}\to T\ :\ \begin{array}{l}
	s(u\cdot w)=s(u)^w\  \pt w\in W,\\
	\alpha(s(x))\neq -1\ \pt u\in\wh{S}: s_\alpha(u)=u
\end{array}
\right\}
\end{equation}
where $\hat{S}$ is the cameral cover associated to $S\to \mc{C}ov$ and 
$s_\alpha\in W$ denotes the reflection with respect to the root $\alpha$.
 This sheaf of groups is representable by a group scheme $\mathbf{T}$, namely, $\mc{T}({S})=\Hom(S,\mathbf{T})$ are the $S$-points of $\mathbf{T}$. Note that its restriction to $\mc{A}_L(G)$ has $S$-points for $b:S\to\lt\otimes L/W$ given by
\begin{equation}\label{eq:mTL}
	\mc{T}_L(S):=\mc{C'}^*\mc{T}=\left\{s:\widehat{S}_b\to T\ :\ \begin{array}{l}
		s(u\cdot w)=s(u)^w\  \pt w\in W,\\
		\alpha(s(x))\neq -1\ \pt u\in\wh{S}: s_\alpha(u)=u
	\end{array}
	\right\}.
\end{equation}
Before we can state the first main result concerning the structure of the Hitchin map, let us include some preliminary definition.
\begin{defi}\label{def:gerbe}
Let $A\to Y$ be an abelian group scheme. An $A$-banded gerbe over $Y$ is a stack $\mathcal{X}$ which is locally (in a chosen  Grothendieck topology) isomorphic to the classifying stack $BA$. The group scheme $A$ is called the band of the gerbe.
\end{defi}
\begin{rk}\label{rk:band} We warn in here that our notion of the band differs in principle from that of Giraud's. Indeed, given a $\mc{G}$-gerbe $\mc{X}$, the band as defined by Giraud \cite{Giraud} is an $\Out(\mc{G})$-torsor $\mc{K}$. When $\mc{G}$ is abelian, however, $\mc{K}$
	can be lifted to an $\Aut(\mc{G})$-torsor $\ol{\mc{K}}$. In Definition \ref{def:gerbe},  the ``band'' rather means  $\ol{\mc{K}}\times_{\Aut(\mc{G})}\mc{G}$, which is locally isomorphic to $\mc{G}$. 
\end{rk}
\begin{thm}[Donagi--Gaitsgory, Ng\^o]\label{thm:T_gerbes_DG}
 The Hitchin map (\ref{eq sHitchin}) and the abstract Hitchin map (\ref{eq abst shit}) induce an
 abelian banded gerbe structure on $\Higgsr(G)$ and $\aHiggs(G)$ respectively. 
 The respective bands are isomorphic to the  group schemes  $\mc{T}_L$ and $\mc{T}$.
\end{thm}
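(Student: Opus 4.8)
The plan is to establish the result first for the abstract Hitchin map (\ref{eq abst shit}), where the geometry is governed directly by the universal cover $\GT\to\GN$, and then to transport it to $\Higgsr(G)$ through the morphism $\mathcal{C}$ of (\ref{eq:morph_pairs_abstract}) together with Fact \ref{fact:springer_res}. In both cases the statement splits into two independent assertions: that the fibres of the Hitchin map are gerbes in the sense of Definition \ref{def:gerbe} (objects exist and are locally unique, with abelian automorphism sheaves), and that the resulting band is the regular centraliser group scheme, which must then be identified with $\mc{T}$, respectively $\mc{T}_L$.

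For the gerbe property over $\Cov$, fix an abstract cameral cover $\specc\to X$. Local nonemptiness holds because $\GT\to\GN$ is the universal abstract cameral cover: \'etale locally on $X$ the cover $\specc$ is a pullback of $\GT\to\GN$, and such a trivialisation produces a local $G$-bundle $E$ together with a $G$-equivariant map $\sigma\colon E\to\GN$ inducing $\specc$, hence a local object of $\aHiggs(G)$ over the given point. Local connectedness---that any two such objects are \'etale-locally isomorphic---follows since the difference of two objects over the same cover is a torsor under the automorphism sheaf. Finally, because $\sigma$ takes values in $\GN$, an automorphism of $(E,\sigma)$ is a section of $E\times_{\GN}J$, where $J\to\GN$ is the regular centraliser group scheme and the fibre over $p\in X$ is the group centraliser corresponding to the regular centraliser $\sigma(p)$; regular centralisers are abelian (Appendix \ref{sec:reg_central}), so this sheaf is abelian and each fibre of (\ref{eq abst shit}) is an abelian banded gerbe.

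It remains to identify this automorphism sheaf, namely the sheaf of $W$-equivariant sections of $J$ over the cameral cover, with $\mc{T}$; this is the core of the Donagi--Gaitsgory and Ng\^o analysis \cite{DG,NgoLemme}. Over the regular locus $\ltr$ of the Cartan the universal centraliser is canonically the constant torus $T$, so descending along the ramified $W$-cover $\specc\to X$ yields $W$-equivariant maps $s\colon\specc\to T$ with $s(u\cdot w)=s(u)^w$. The delicate point is the behaviour along the ramification divisor, where $\specc$ is branched over a root hyperplane: one compares the centraliser at the corresponding subregular element with the limit of Cartan centralisers, and a rank-one ($\sl_2$) computation shows that such a section extends to a section of $J$ precisely when $\alpha(s(u))\neq -1$ for every $u$ fixed by the reflection $s_\alpha$. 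These are exactly the two defining conditions of $\mc{T}(S)$ in (\ref{eq:mT}), whence $J\cong\mathbf{T}$ and the band is $\mc{T}$.

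To descend to the Hitchin map (\ref{eq sHitchin}), observe that $\mathcal{C}$ sends a regular Higgs bundle $(E,\phi)$ to an abstract Higgs bundle whose associated cameral cover is the honest cover $\widehat{X}_b$ of (\ref{eq complex cameral}); by Fact \ref{fact:springer_res} this is the pullback of $\GT\to\GN$ along $\mc{C}'$, so $\mathcal{C}$ exhibits $\Higgsr(G)$ as the part of $\aHiggs(G)$ lying over the image of $\mc{C}'$. Pulling back the band therefore produces $\mc{C'}^{*}\mc{T}=\mc{T}_L$ as in (\ref{eq:mTL}), giving the abelian banded gerbe structure on $\Higgsr(G)$ with band $\mc{T}_L$. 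The main obstacle is the ramification analysis of the previous paragraph: proving that the inequality $\alpha(s(u))\neq -1$ at reflection-fixed points is precisely the condition for a Weyl-equivariant torus section to come from $J$ requires the explicit local model of the regular centraliser scheme at subregular points, and it is exactly this computation that forces the nontrivial second clause in the definition of $\mc{T}$.
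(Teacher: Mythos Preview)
The paper does not supply its own proof of this theorem: it is stated as a result of Donagi--Gaitsgory \cite{DG} and Ng\^o \cite{NgoLemme} and used as input for what follows. So there is no in-text argument to compare against; your outline is, in its broad strokes, the strategy one finds in those references, and your identification of the ramification analysis (the condition $\alpha(s(u))\neq -1$ at reflection-fixed points) as the crux of the band computation is accurate.

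Two steps, however, are not arguments as written. First, your justification of local connectedness---``the difference of two objects over the same cover is a torsor under the automorphism sheaf''---is circular: the sheaf $\mathrm{Isom}((E,\sigma),(E',\sigma'))$ is a priori only a pseudo-torsor, and asserting it is a torsor (i.e.\ locally non-empty) is exactly the statement of local connectedness. In \cite{DG} this is established directly from the transitivity of the $G$-action on regular centralisers with a prescribed cameral type. Second, the passage to $\Higgsr(G)$ via ``$\mathcal{C}$ exhibits $\Higgsr(G)$ as the part of $\aHiggs(G)$ lying over the image of $\mc{C}'$'' is too fast: the morphism $\mathcal{C}$ forgets the Higgs field and retains only its centraliser, so it is not an equivalence onto its image. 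One needs the additional observation (cf.\ \cite[Corollary 17.6]{DG} and the remark after Theorem \ref{thm DG cameral data} in the paper) that fixing $b\in\mc{A}_L(G)$ supplies exactly the missing datum, namely the $W$-equivariant embedding $\widehat{X}_b\hookrightarrow\lt\otimes L$, so that over a fixed $b$ the fibre of $\shitc$ and the fibre of $\ashit$ over $\mc{C}'(b)$ agree.
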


 A major result in \cite{DG} is to give a cocyclic interpretation of the stack $\Higgsr(G)$ in terms of principal bundles over cameral covers, the so called \textbf{cameral data}.  
 
 \begin{defi}\label{def:mcCD}
 Denote by $\mc{C}am$ (for cameral data) the stack over $\mc{A}_L(G)$ that assigns to each 
 $b\in H^0(S,\lt\otimes f^*L/W)$  (for $f:S\to X$) 
 the category of $R$-twisted, $N$-shifted $W$-equivariant principal $T$-bundles on $\widehat{S}_b$, the cameral cover associated with $b$ (see \cite[\S 6.2]{DG}). 
 This category has as objects triples $(P,\gamma,\beta)$ where
 
 1. $P$ is a principal $T$-bundle on $\widehat{S}_b$.
 
 2. A map $\gamma$ fitting in the map of short exact sequences
 $$
 \xymatrix{
 	0\ar[r]&T\ar[r]\ar[d]&N\ar[r]\ar[d]_\gamma&W\ar[r]\ar[d]_{id}&1\\
 	0\ar[r]&\Hom(\widehat{S}_b,{T})\ar[r]&\Aut_W(P)\ar[r]&W\ar[r]&1.
 }
 $$
 In the above,  $\gamma$ assigns to each $n_w\in N$ inducing $w\in W$ an isomorphism
 $$\gamma(n):P\cong w(P)\otimes \mc{R}_w$$  (that is, an object of $\Aut_W(P)$). Here,
 $$w(P)=w^*P\times_w P
 $$
  and $\mc{R}_w$ is the principal $T$-bundle associated to the ramification locus of $w$. When $w=s_\alpha$ is the reflection associated with $\alpha$, $\mc{R}_w=\check{\alpha}(\mc{O}(D_\alpha))$, where $D_\alpha\subset\specu_b$ are the fixed points under $s_\alpha$. This defines a cocycle in $Z^1(W,BT)$, so that the $\mc{R}_{s_\alpha}$ for simple roots determine $\mc{R}_w$ for all $w\in W$. See \cite[\S 5]{DG} for details.
 
 3. $\beta=(\beta_i)_{i=1}^r$ is a family of isomorphisms 
 $$\beta_i: \alpha_i(P)|_{D_{\alpha_i}}\cong \mc{O}(D_{\alpha_i})|_{D_{\alpha_i}},$$
 for each simple root ${\alpha_i}$, with $D_{\alpha_i}$ the corresponding ramification divisor.  
 
 The above data are subject to compatibility conditions for which we refer the reader to  \cite[\S 6.2]{DG}. 
 \end{defi}

\begin{thm}\label{thm DG cameral data}
The stacks $\mc{C}am$ and $\Higgsr(G)$ are isomorphic. In particular, the fibre of the Hitchin map over $b\in \mc{A}_L(G)$ is the category of $R$-twisted, $N$-shifted $W$-equivariant principal $T$-bundles on $\widehat{X}_b$.
\end{thm}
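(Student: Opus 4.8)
The plan is to recognise both stacks as gerbes over the Hitchin base $\mc{A}_L(G)$ banded by the same group scheme $\mc{T}_L$, and then to produce an explicit comparison functor which I upgrade to an equivalence by rigidity. By Theorem \ref{thm:T_gerbes_DG} the stack $\Higgsr(G)$ is already an $\mc{T}_L$-banded gerbe over $\mc{A}_L(G)$. I would first check that $\mc{C}am$ is likewise a gerbe over $\mc{A}_L(G)$ banded by $\mc{T}_L$: over each point $b$ cameral data exist (local nonemptiness) and any two are \'etale-locally isomorphic (local connectedness). Granting this, the statement reduces to exhibiting a single morphism $\Higgsr(G)\to\mc{C}am$ over the identity of $\mc{A}_L(G)$ inducing the identity on bands, since any band-preserving morphism between gerbes with the same band over a common base is automatically an isomorphism.

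To build the functor, I start from a regular Higgs bundle $(E,\phi)$ with Hitchin image $b=\shitc\circ[(E,\phi)]$ and associated cameral cover $\pi\colon\widehat{X}_b\to X$ as in \eqref{eq complex cameral}. Over $\widehat{X}_b$ the Grothendieck--Springer resolution $\tgr$ of Fact \ref{fact:springer_res} provides, away from ramification, a tautological reduction of $\pi^*\phi$ to a Borel $B$; pushing this reduction out along $B\twoheadrightarrow T$ produces a principal $T$-bundle $P$ on $\widehat{X}_b$. Concretely, the identification $\gr\times_{\GN}\GT\cong\tgr$ lets me translate the centraliser datum $\sigma_\phi\colon E\to\GN$, pulled back to the cameral cover, into this $T$-structure, so that $P$ is canonical. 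The $W$-action on $\widehat{X}_b$ together with the action of $N=N_G(T)$ on Borel reductions then yields the shifting isomorphisms $\gamma(n)\colon P\cong w(P)\otimes\mc{R}_w$ of Definition \ref{def:mcCD}, the twist $\mc{R}_w$ recording how the $T$-reduction jumps across the ramification divisor of $w$; and the behaviour of $\phi$ along each $D_{\alpha_i}$, where the field sits on a wall but remains regular, supplies the trivialisations $\beta_i\colon\alpha_i(P)|_{D_{\alpha_i}}\cong\mc{O}(D_{\alpha_i})|_{D_{\alpha_i}}$. The assignment $(E,\phi)\mapsto(P,\gamma,\beta)$ is the desired functor.

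I would then verify that this is a morphism of $\mc{T}_L$-gerbes. The essential computation is that the automorphism sheaf of a cameral datum $(P,\gamma,\beta)$ is exactly $\mc{T}_L$: an automorphism is a section of $T$ over $\widehat{X}_b$ that is $W$-equivariant in the sense shifted by $\gamma$ and compatible with $\beta$, and unwinding the conditions of \cite[\S6.2]{DG} identifies these with the sections satisfying $s(u\cdot w)=s(u)^w$ together with the reflection condition $\alpha(s(u))\neq -1$ at the fixed points of $s_\alpha$, i.e. the defining conditions of \eqref{eq:mTL}. Since the functor covers the identity on $\mc{A}_L(G)$ and the induced map on automorphism sheaves agrees with this identification on both sides, rigidity of gerbes yields $\Higgsr(G)\cong\mc{C}am$. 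The fibrewise statement is then the restriction over a single $b\in\mc{A}_L(G)$.

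The main obstacle is the ramification analysis underlying the second and third paragraphs: showing that the regular centraliser, equivalently the band $\mc{T}_L$, is after pullback along the cameral cover controlled by $T$ with exactly the wall-crossing modification encoded by the $\mc{R}_w$ and the condition $\alpha(s(u))\neq -1$. This is the heart of Donagi--Gaitsgory's argument. Away from the $D_{\alpha_i}$ everything is an \'etale-local triviality statement, so the whole difficulty concentrates at the ramification divisors, where one must check that the tautological $T$-reduction extends across $D_{\alpha_i}$ with the prescribed twist $\check\alpha(\mc{O}(D_{\alpha_i}))$ and that the cocycle $\{\mc{R}_w\}\in Z^1(W,BT)$ together with the gluing data $\beta$ satisfies all the compatibility conditions of \cite[\S6.2]{DG}; the precise interplay between the order-two reflection $s_\alpha$, the coroot $\check\alpha$, and the condition $\alpha(s(u))\neq -1$ is where the argument is genuinely delicate.
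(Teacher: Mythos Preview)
Your proposal is correct and follows essentially the same strategy as the paper: both stacks are identified as $\mc{T}_L$-banded gerbes over $\mc{A}_L(G)$, an explicit functor $\Higgsr(G)\to\mc{C}am$ is constructed, and one concludes by the rigidity principle that any morphism of gerbes with the same band is an isomorphism. The only cosmetic difference is that the paper packages the construction of the functor via a \emph{universal} cameral datum $(\Puniv,\guniv,\buniv)$ on $\GT$ (with $\Puniv=\GT\times_{G/B}G/U$) and then pulls back along $\sigma_\phi\colon E\to\GN$, whereas you describe the same object pointwise in terms of Borel reductions coming from the Grothendieck--Springer resolution; since $\tgr\cong\gr\times_{\GN}\GT$ these are the same construction.
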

\begin{proof}[Sketch of proof.]
Since we need to refer to the proof in the sequel, we will recall the main elements. For details, we refer the reader to \cite[Theorem 6.4]{DG}.

Firstly, we have a universal cameral datum 
\begin{equation}\label{eq:universal_cameral_datum}
(\Puniv,\guniv,\buniv)\to \GT,
\end{equation}
given by:

a)  The bundle $\Puniv:=\GT\times_{G/B}G/U$ is
given by the pullback of the principal $T$ bundle $G/U\to G/B$ onto the incidence variety $\GT\subset\GN\times G/B$. 

b) The datum $\guniv$ is obtained by observing that for reflections under simple roots $s_\alpha$, the isomorphism  $\Puniv|_{G/T}\cong s_\alpha^*\Puniv|_{G/T}$ extends meromorphically with associated divisor $-\check{\alpha}(D_{\alpha})$.

c) The datum $\buniv$ is checked from the above by reducing the statement to suitable Levi subgroups.

Given a Higgs bundle $(E,\phi)$, let $\mc{C}(E,\phi)=(E,\sigma_\phi)$ where $\mc{C}$ is as in \eqref{eq:morph_pairs_abstract}. Since $\sigma_\phi$ is $G$-equivariant
$$
E\stackrel{\sigma_\phi}{\longrightarrow}{\GN},
$$
then of $\hat{E}=E\times_{\GN}\GT\longrightarrow E$ is also $G$-equivariant, and so descends to a unique
$$
\hat{X}\longrightarrow	X.
$$
The same applies to $\guniv,\buniv$, which yields a cameral datum $(P_E,\gamma_E,\beta_E)$  on $\hat{X}$. Since the assignment $(E,\phi)\mapsto (P_E,\gamma_E,\beta_E)$ is functorial, it defines a morphism 
$$
\Higgsr(G)\longrightarrow\mc{C}am.
$$
By Theorem \ref{thm:T_gerbes_DG} and the discussion on page  38 of \cite{DG}), this is a morphism of of $\mc{T}_L$-gerbes, which must therefore be an isomorphism.
\end{proof}
\begin{rk}
Note that according to \cite[Corollary 17.6]{DG}, an element of $\Higgsr(G)$ is equivalent to an element of $\mc{H}iggs$ together with a $W$-equivariant embedding  $\hat{X}\to \lt\otimes L$ of schemes over $X$. The way we have defined cameral covers for $L$-valued Higgs bundles directly determines this embedding. 
\end{rk}

\section{The local situation: untwisted $G_\R$-Higgs bundles} \label{sec:local_Higgs}
	\subsection{Quasi-split real groups and the local Hitchin map}
	
The purpose of this section is to analyse the local Hitchin map 
\begin{equation}\label{eq lshit}
\lshit:\stack{\m/H}\to\la/W(\la), 
\end{equation}
constructed by $H$-equivariance of the Chevalley morphism \eqref{eq Chevalley r}. 

From now on we will assume that $G_{\R}$ is a quasi-split strongly  reductive algebraic group. Amongst simple groups, quasi-split forms include split groups and those whose Lie algebra is $\su(n,n)$, $\su(n,n+1)$, $\so(n,n+2)$ and $\mathfrak{e}_{6(2)}$.

We note in here that algebraic groups admit a complexification. We let $G:=G_\R$, and assume it to be connected.

 Recall that a real group is quasi-split if any of the following equivalent conditions hold:

(QS1) The centraliser $\lc_\g(\la)$ of $\la$ inside $\h$ is abelian. In this case $\lc_\g(\la)$ is a Cartan subalgebra.

(QS2) There exists a $\theta$ invariant Borel subgroup $B<G$ such that $B^\theta=B^{op}$ is the opposed Borel subgroup. Borel subgroups satisfying this  condition are  called $\theta$-anisotropic Borel subgroups. 


We fix once and for all an anisotropic Borel subgroup $B<G$, a $\theta$ invariant maximal torus $T<B$ which we assume to be the complexification of $T_\R=T\cap G_\R$. Assume that the Lie algebra of $T$  satisfies $\lt=\ld\oplus\la$ with  $\la=\lt\cap\m$ maximal and $\ld=\lt\cap\h$. We let $S\subset \Delta=\Delta(\g,\lt)$ be the associated sets of (simple) roots, $W$ the Weyl group. Similarly, we can define $\Sigma(\la)$ the set of restricted roots associated to $\la$ (which is the image of the restriction map $res|_{\la}:\Delta\to\la^*$). This is a root system (possibly non reduced). Reflection with respect to simple such roots generates a group $W(\la)$ called the restricted Weyl group. This group is also isomorphic to
$N_H(\la)/C_H(\la)$ and $N_{H_\R}(\la_{\R})/C_{H_\R}(\la_{\R})$, where $\la_\R=\la\cap\g_\R$. Moreover, any root $\alpha$ can be expressed as $\alpha=\lambda+i\beta$ with $\lambda\in\la_\R^*, \beta\in \ld_\R^*$. A Borel subalgebra is $\theta$-anisotropic if and only if it induces an ordering like this. Moreover, quasi-split real forms have no purely imaginary roots (a necessary condition to admit a $\theta$-anisotropic Borel subgroup).

The following lemma tells us that up to isogeny, strongly reductive real group are real forms.

\begin{lm}\label{lm:quasisplit real forms}
	Let $G_\R$ be a strongly reductive real algebraic group and $G$ its complexification. Then there exists a real form  $G^\sigma$ of $G$ such that:
	\begin{enumerate}
		\item\label{it:isogeny_real_gps} $\g_\R=\g^\sigma$, namely, $G_\R$ and $G^\sigma$ are isogenous.
		\item\label{it:same_normal} $N_G(G_\R)=N_G(G^\sigma)$.
	\end{enumerate}
\end{lm}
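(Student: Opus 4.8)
The plan is to manufacture the antiholomorphic involution $\sigma$ out of the Cartan involution $\theta$ together with a compatible compact real form, and then to check that passing from $G_\R$ to the resulting real form $G^\sigma$ changes neither the Lie algebra nor the normaliser in $G$.

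First I would produce $\sigma$. Extend the Cartan involution $\C$-linearly to a holomorphic involution $\theta\colon\g\to\g$ with $\g^\theta=\h$ and $(-1)$-eigenspace $\m$; this integrates to a holomorphic involution of the connected group $G$ with $G^\theta=H$ exactly as in Remark \ref{rk:complex_red_is_red}. By Knapp's positivity axiom the real subspace $\u:=\h_\R\oplus i\m_\R$ is a compact real form of $\g$, it is visibly $\theta$-stable, and since $G$ is connected reductive its conjugation integrates to the antiholomorphic involution $\tau$ of $G$ fixing the compact form $G^\tau$, with $\tau\theta=\theta\tau$. Set $\sigma:=\theta\tau=\tau\theta$. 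Then $\sigma$ is antiholomorphic, $\sigma^2=\theta^2\tau^2=\mathrm{id}$, and a direct eigenspace computation (on $\h_\R,\m_\R,i\h_\R,i\m_\R$) gives $\g^\sigma=\h_\R\oplus\m_\R=\g_\R$. Defining $G^\sigma:=\{g\in G:\sigma(g)=g\}$ therefore yields a real form of $G$ with $\Lie(G^\sigma)=\g_\R$. This is precisely assertion \ref{it:isogeny_real_gps}: $G_\R$ and $G^\sigma$ are two closed subgroups of $G$ sharing the Lie algebra $\g_\R$, hence isogenous (their common identity component is the connected subgroup integrating $\g_\R$).

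For assertion \ref{it:same_normal} I would prove that both normalisers coincide with $N_G(\g_\R):=\{g\in G:\Ad(g)\g_\R=\g_\R\}$. The inclusions $N_G(G_\R)\subseteq N_G(\g_\R)$ and $N_G(G^\sigma)\subseteq N_G(\g_\R)$ come from differentiating the conjugation action. For the reverse inclusions the key point is that an antiholomorphic involution is pinned down by its fixed algebra once $G$ is connected: the differential of any such involution with fixed algebra $\g_\R$ must be the conjugation of $\g$ relative to $\g_\R$. Hence, writing $c_g(x)=gxg^{-1}$, for $g\in N_G(\g_\R)$ the involution $\sigma':=c_g\,\sigma\,c_g^{-1}$ satisfies $d\sigma'=d\sigma$, so $\sigma\sigma'$ is a holomorphic automorphism of the connected $G$ with trivial differential, forcing $\sigma'=\sigma$, i.e. $g^{-1}\sigma(g)\in Z(G)$. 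This says at once that $g$ preserves $G^\sigma=\{x:\sigma(x)=x\}$, giving $N_G(\g_\R)\subseteq N_G(G^\sigma)$ and therefore $N_G(G^\sigma)=N_G(\g_\R)$. Since $G$ (hence $G_\R$) is assumed connected, $G_\R$ equals the identity component $(G^\sigma)^0$, which is characteristic in $G^\sigma$ and so also preserved by such $g$; this yields $N_G(\g_\R)\subseteq N_G(G_\R)$, and combining the inclusions gives $N_G(G_\R)=N_G(\g_\R)=N_G(G^\sigma)$.

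I expect the bookkeeping with component groups to be the main obstacle. The construction of $\sigma$ and the matching of Lie algebras are essentially formal, and the identification $N_G(G^\sigma)=N_G(\g_\R)$ uses only connectedness of $G$. The delicate point is ensuring that normalising $\g_\R$ is strong enough to preserve the \emph{precise} subgroup $G_\R$, rather than merely its identity component or the full real locus $G^\sigma$: conjugation by $g\in N_G(\g_\R)$ induces an automorphism of $G^\sigma$ that need not be inner, so for a disconnected $G_\R$ one must know that $G_\R$ is characteristic in $G^\sigma$. Under the standing connectedness assumption this is automatic because $G_\R=(G^\sigma)^0$, and I would reduce the general case to this by using that $g$ maps $G_\R$ to a $\sigma$-stable subgroup of $G^\sigma$ with the same identity component $(G_\R)^0$, invoking the strong reductivity of $G_\R$ in Knapp's sense to control the finite quotient $\pi_0(G_\R)$.
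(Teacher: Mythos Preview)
Your overall strategy matches the paper's: construct $\sigma=\theta\tau$ from the Cartan involution and a compatible compact form, integrate it to $G$ using the global polar decomposition, and then prove $N_G(G_\R)=N_G(\g_\R)=N_G(G^\sigma)$. For part~(1) your argument is essentially identical to the paper's (you build $\tau$ from $\theta$ rather than starting from the conjugation $\sigma$ and finding a compatible $\tau$, but the output is the same). For the equality $N_G(G^\sigma)=N_G(\g_\R)$ your route is actually more self-contained than the paper's: you argue that $c_g\sigma c_g^{-1}$ and $\sigma$ have the same differential, hence agree on the connected $G$, forcing $g^{-1}\sigma(g)\in Z(G)$; the paper instead invokes \cite[Proposition~3.6]{HKR} to say that $G^\sigma$ is itself strongly reductive and then appeals to the same ``normaliser-of-group equals normaliser-of-algebra'' principle.

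There is one genuine slip. You write ``Since $G$ (hence $G_\R$) is assumed connected'', but connectedness of the complexification $G$ does \emph{not} imply connectedness of $G_\R$: already $\GL(n,\R)\subset\GL(n,\C)$ is a counterexample, and nothing in the paper's Definition~\ref{def:strongly_red} forces $G_\R$ to be connected. So the step ``$G_\R=(G^\sigma)^0$, which is characteristic in $G^\sigma$'' is not available in general. You correctly flag this as the delicate point and sketch a reduction via strong reductivity, but that sketch is not yet a proof: you would need to show $G_\R\subseteq G^\sigma$ (e.g.\ via the polar decompositions $G_\R=H_\R\exp(\m_\R)$ and $G\cong U\times i\lu$, checking that $H_\R\subset U^\theta\subset G^\sigma$) and then argue that $\Ad(g)$ preserves the specific union of components comprising $G_\R$. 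The paper's own proof is also terse at exactly this spot (the phrase ``so equality holds'' is doing a lot of work), so your identification of this as the crux is apt, but the inference you draw from connectedness of $G$ should be removed.
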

\begin{proof}
	To see	\eqref{it:isogeny_real_gps}, note that $\g_\R$ is a real form of $\g$. Let $\sigma$ be the involution defining it, and let $\theta=\sigma\tau$ be the Cartan involution compatible with a compact form $\lu=\g^\tau$ of $\g$. Let $U=\exp(\lu)<G$ be the corresponding compact subgroup. Then by \cite[\S V.II]{K} $G\cong U\times i\lu$, and the involution on $G$ defined by
	$$
	U\times i\lu\ni (e^Y, X)\stackrel{\sigma}{\mapsto} (e^{\sigma Y}, \sigma X)
	$$ 
	defines a real form $G^\sigma$. By construction, its Lie algebra is that of $G_\R$.
	
	To prove \eqref{it:same_normal}, note that strong reductivity of $G$ implies thats $N_G(G_\R)\subset N_G(\g_\R)$ (as $G$ acts by inner automorphisms on $\g$ and so does $N_G(G_\R)$ on $\g_\R$), so equality holds. On the other hand, real forms of strongly reductive complex groups are strongly reductive by \cite[Proposition 3.6]{HKR}, so also $N_G(G^\sigma)=N_G(\g^\sigma)$. We may conclude from \eqref{it:isogeny_real_gps}.
\end{proof}

Returning to the study of \eqref{eq lshit}, one cannot expect to have any ``nice'' (gerby) structure of the local Hitchin map (\ref{eq lshit}) as a whole. 
The reason is that
the 
inertia stack, which is the stack that classifies automorphisms of objects, is far from being flat. This is a necessary condition in order to have a gerbe structure \cite[Appedix A]{AOV}.

Indeed, consider  the following group scheme on $\m$:
\begin{defi}\label{defi Ctheta}
	We let $C^\theta\to\m$ be the group scheme over $\m$ defined by
	$$
	C^\theta=\{(m,h)\in \m\times H\ |\ h\cdot m=m\}.
	$$ 
\end{defi}
\begin{rk}\label{rk:notation}
	The superscrip $^\theta$ in the notation is related with the fact that $H=G^\theta$ for a holomorphic involution $\theta$ on $G$ lifting the extension  of the Cartan involution by complex linearity (cf. Remark \ref{rk:complex_red_is_red}). Thus, if $C\subset \m\times G$ is the centraliser group scheme, $C^\theta$ is the fixed point set of $C$ by $(x,g)\mapsto (x,g^\theta)$.
\end{rk}
Note that there is an action of $H$ on $C^\theta$ (namely, the adjoint action) lifting the isotropy action on $\m$.
This means that the inertia stack of $\stack{\m/H}$ is induced from the latter sheaf. Indeed:
\begin{lm}\label{inertia}
	The sheaf represented by the group $C^\theta\to\m$ descends to the inertia stack on $\stack{\m/H}$.
\end{lm}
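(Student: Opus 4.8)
The plan is to recall the standard description of the inertia stack of a quotient stack and to recognise $C^\theta$ as the associated inertia scheme, so that the lemma becomes the assertion that descent of the $H$-equivariant group scheme $C^\theta$ reproduces the inertia. Recall that for an algebraic group $F$ acting on a scheme $Y$, an object of $\stack{Y/F}$ over a test scheme $T$ is a pair $(P,\alpha)$ with $P\to T$ a principal $F$-bundle and $\alpha\colon P\to Y$ an $F$-equivariant map, and that the inertia $I_{\stack{Y/F}}$ is the sheaf of groups over $\stack{Y/F}$ assigning to each such object its automorphism group $\Aut(P,\alpha)$. First I would write down the inertia scheme $I_Y=\{(y,f)\in Y\times F:\ f\cdot y=y\}$, equipped with the $F$-action $g\cdot(y,f)=(g\cdot y,\,gfg^{-1})$, and invoke the general identification $I_{\stack{Y/F}}\cong\stack{I_Y/F}$ as stacks over $\stack{Y/F}$ (see \cite{Vistoli}).

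Applying this to $F=H$ acting on $Y=\m$ through the isotropy representation $\iota$, the inertia scheme $I_\m$ is \emph{literally} the group scheme $C^\theta$ of Definition \ref{defi Ctheta}, and the $H$-action $h\cdot(m,g)=(\iota(h)m,\,hgh^{-1})$ appearing in the formula is precisely the adjoint action lifting the isotropy action noted just before the statement. Thus the general fact already gives $\stack{C^\theta/H}\cong I_{\stack{\m/H}}$. To phrase this as a descent statement, I would note that, being $H$-equivariant, $C^\theta\to\m$ descends to a sheaf of groups over $\stack{\m/H}$: to an object $(P,\alpha)$ over $T$ one assigns the descent along $P\to T$ of the $H$-equivariant group scheme $\alpha^*C^\theta\to P$. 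The task is then to check this sheaf coincides with $\Aut(P,\alpha)$. An automorphism of $(P,\alpha)$ is a gauge transformation, i.e.\ an $H$-equivariant map $\psi\colon P\to H$ acting by $p\mapsto p\cdot\psi(p)$, subject to $\alpha\circ\psi=\alpha$; by $H$-equivariance of $\alpha$ this last condition says exactly that $(\alpha(p),\psi(p))\in C^\theta$ for every $p$, that is, that $\psi$ is an $H$-equivariant section of $\alpha^*C^\theta$. Passing to $H$-quotients identifies such sections with sections of the descended group scheme over $T$, yielding the desired isomorphism of sheaves of groups, functorial in $T$.

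The only delicate point is the coherent choice of conventions: one must fix compatible side and conjugation conventions for gauge transformations, for the $F$-action on itself, and for the equivariance of $\alpha$, so that the stabiliser condition and the fibrewise group law of $C^\theta\subset\m\times H$ match composition of automorphisms on the nose, rather than up to an inversion. Granting these compatibilities --- routine but to be selected coherently --- the identification is canonical, and the lemma follows. I expect this convention bookkeeping, rather than any substantive geometric input, to be the main (and minor) obstacle.
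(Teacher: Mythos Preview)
Your proposal is correct and follows essentially the same approach as the paper: both identify automorphisms of an object $(P,\phi)$ of $\stack{\m/H}$ with $H$-equivariant sections of $\alpha^*C^\theta$ (equivalently, sections of the adjoint bundle centralising the Higgs field), and then observe that this is the descent of $C^\theta$ along the atlas $\m\to\stack{\m/H}$. The paper's proof is a two-line ``this is standard'' sketch; you have simply unpacked the same standard argument in greater detail, including the general identification $I_{\stack{Y/F}}\cong\stack{I_Y/F}$ and the gauge-transformation computation.
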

\begin{proof}
	This is standard: sections of $\stack{\m/H}$ are, up to covering, trivial bundles together with 
	equivariant maps to $\m$. Automorphisms of a pair $(P,\phi)$ over $S\in\CET$ are sections of the adjoint 
	bundle $\Ad(P)$ centralising $\phi$,  which locally are just equivariant maps to $C^\theta$.
\end{proof}
Consider  the
regular locus:
\begin{defi}\label{def:mr}
	We denote by 
	$$
	\mr=\{x\in\m\ :\ \dim C_{H}(x)=\dim\la\}.$$
	This is the set of regular elements, and it
	can be proved it corresponds to elements of $\m$ with minimal dimensional centraliser, or maximal dimensional 
	(isotropy) orbits \cite[Proposition 7]{KR71}.\end{defi}
\begin{rk}\label{rk mr c gr}
	An equivalent definition of quasi-splitness is to assume that  $\mr=\m\cap\gr$ where $\gr\subset\g$ is the subset of (adjoint) 
	regular elements. This implies in particular that $\lc_{\m}(x)$ has dimension equal to the dimension of $\la$, so in particular, if $x\in\la$ is regular, then $\lc_{\m}(x)=\la$.
\end{rk}
\begin{defi}
	We will call the stack $\stack{\mr/H}$ the stack of everywhere regular local $G_{\R}$-Higgs bundles. 
\end{defi}
\begin{lm}\label{C is smooth}
	$C^\theta\to \mr$ is smooth.
\end{lm}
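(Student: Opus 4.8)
The plan is to realise $C^\theta \to \mr$ as the fixed-point locus of an involution on the regular centraliser group scheme of $\g$ (which is known to be smooth), and then to promote smoothness of the fibres to smoothness of the morphism via a flatness argument. First I would exploit the quasi-split hypothesis through Remark \ref{rk mr c gr}, which gives $\mr = \m \cap \gr$; thus every $x \in \mr$ is a regular element of $\g$, and I may compare the isotropy centralisers with the ordinary adjoint ones.

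Let $C \subset \m \times G$ be the centraliser group scheme of Remark \ref{rk:notation} and let $C_\g \to \gr$ be the regular centraliser group scheme for the adjoint action of $G$ on $\g$. Over $\mr$ the two agree, since both are $\{(x,g)\in\mr\times G : \Ad(g)x = x\}$, so $C|_{\mr} = C_\g \times_{\gr}\mr$. The morphism $C_\g \to \gr$ is smooth (this is the classical smoothness of regular centralisers, cf. \cite{DG} and Appendix \ref{sec:reg_central}), and smoothness is stable under the base change along $\mr \to \gr$; hence $C|_{\mr}\to\mr$ is smooth, and as $\mr$ is open in the vector space $\m$ the total space $C|_{\mr}$ is itself smooth over $\C$.

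Next I would introduce the involution $\tilde\theta\colon (x,g)\mapsto (x,\theta(g))$ on $C|_{\mr}$, covering $\mathrm{id}_{\mr}$. It is well defined, for if $\Ad(g)x=x$ with $x\in\m$ then $\Ad(\theta(g))x = \theta\,\Ad(g)\,\theta(x) = \theta\,\Ad(g)(-x) = -\theta\,\Ad(g)x = -\theta(x) = x$, using $\theta|_\m=-1$ and $\theta^2=\mathrm{id}$. By Remark \ref{rk:notation} its fixed-point subscheme is exactly $C^\theta$. Since $\langle\theta\rangle\cong\Z/2$ is linearly reductive in characteristic zero and $C|_{\mr}$ is smooth over $\C$, the fixed locus $C^\theta=(C|_{\mr})^{\tilde\theta}$ is again smooth over $\C$.

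Finally, I would pass from smoothness of the total space to smoothness of $C^\theta\to\mr$. The fibre over $x\in\mr$ is $C_H(x)=C_G(x)^\theta$, a closed subgroup scheme of the reductive group $H$, hence smooth over $\C$ by Cartier's theorem, and of constant dimension $\dim\la$ by Definition \ref{def:mr}; together with the identity section this forces $\dim C^\theta = \dim\mr + \dim\la$. Thus over the regular base $\mr$ the morphism has smooth equidimensional fibres of the expected dimension and Cohen--Macaulay total space, so miracle flatness yields flatness, and flat with smooth fibres gives smoothness. The step I expect to be the main obstacle is precisely this passage from fibrewise to global smoothness: I must check that taking $\theta$-fixed points preserves smoothness of the morphism (handled via linear reductivity of $\Z/2$ in characteristic zero) and then assemble flatness from the dimension count, whereas the input smoothness of $C_\g\to\gr$ is classical and may simply be cited.
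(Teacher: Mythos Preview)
Your strategy is genuinely different from the paper's. The paper proves smoothness by a direct tangent-space computation: it writes $C^\theta$ as the zero locus of $f(x,h)=\Ad(h)x-x$ inside $\mr\times H$ and verifies that the projection $T_{(x,h)}C^\theta\to T_x\mr$ is surjective at every point, which amounts to showing that $(\Ad(h^{-1})-1)(\m)\subset[x,\h]$ for every $h\in C_H(x)$. You instead pass through the ambient smooth group scheme $C|_{\mr}\to\mr$, take $\theta$-fixed points, and then try to assemble smoothness of the map from smoothness of source, target and fibres via miracle flatness.

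The gap is in your dimension step. You assert that the identity section together with constant fibre dimension forces $\dim C^\theta=\dim\mr+\dim\la$, but the identity section only constrains the connected component of $C^\theta$ that contains it. Since $C_H(x)$ is typically disconnected, $C^\theta$ may have other components, and nothing in your argument prevents such a component from failing to dominate $\mr$ and hence having strictly smaller dimension; miracle flatness would then not apply there. To rule this out you must show that $T_{(x,h)}C^\theta\to T_x\mr$ is onto for \emph{every} $(x,h)$, not only for $h$ in the identity component, and this is precisely the content of the paper's computation. The key input---which you never invoke---is that $C_G(x)$ is abelian for $x\in\mr\subset\gr$ (this is where quasi-splitness enters), so any $h\in C_H(x)\subset C_G(x)$ acts trivially on $\lc_\m(x)$; combined with $\Ad(h)[x,\h]=[x,\h]$, this gives $(\Ad(h^{-1})-1)(\m)\subset[x,\h]$ and hence the needed surjectivity. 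Once you add this observation your argument goes through, but at that point you have supplied exactly the Lie-theoretic step that the paper's direct proof isolates, so the detour through fixed loci and miracle flatness does not save work.
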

\begin{proof}
	This proof is similar to the one of  \cite[Proposition 11.2]{DG}.
	
	Given a complex point 
	$(x,h)\in C^\theta(\C)$, we have that the tangent space $T_{(x,h)}C^\theta(\C)$ is defined inside 
	$T_{(x,h)}\mr\times  H=\m\times \h$ by the equation
	$d_{(x,h)} f(y,\xi)=0$ where
	$$f(x,h)=\Ad(h)(x)-x$$
	Now: 
	$$
	\frac{\partial}{\partial h}|_{(h,x)} f(y,\xi)=\frac{\partial}{\partial h}|_{(h,x)} \Ad(h)\circ ev_x(y,\xi)
	=h\cdot[\xi,x]
	$$
	Hence $d_{(x,h)}f(y,\xi)=\Ad(h)([\xi,x])+h\cdot y-y$. Clearly, the differential of the map 
	$C^\theta\to\mr$
	sends $(y,\xi)\mapsto y$. So all we need to check is that
	$$
	\{y\in\m\ |\ y-h^{-1}(y)\in\ad(x)(\h)\}=\m
	$$
	One inclusion is clear, so lets see that any $z\in\m$ satisfies the condition. First note that 
	$$
	\g\cong[x,\g]\oplus\lc_{\g}(x)\cong[x,\h]\oplus\lc_\m(x)\oplus[x,\m]\oplus\lc_\h(x)
	$$
	so that
	$$
	\m\cong[x,\h]\oplus\lc_\m(x)
	$$
	Since the action of any $h\in C_H(x)$ respects the direct sum, it is enough to check that 
	$$
	\lc_\m(x)\subseteq \{y\in\m\ |\ y-\Ad(h^{-1})(y)\in\ad(x)(\h)\}
	$$
	First, suppose that $C_H(x)$ is connected. Then, by quasi splitness, 
	$\lc_\h(x)=\lc_\h(\lc_\m(x))$ (as the centraliser of a regular element is abelian),
	so that for any $z\in \lc_\m(x)$, $\Ad h(z)-z=0\in\ad(x)(\h)$. 
	
	Now, for the non connected case, since fibres are algebraic groups in characteristic zero
	they are smooth. Thus independently of the component $h$ is in, the dimension of the tangent 
	bundle will not vary.
\end{proof}
In order to obtain a gerbe, we need to ``quotient'' the stack $\stack{\mr/H}$ by the inertia stack parameterizing
automorphisms of objects, thus obtaining a gerbe over the sheaf of isomorphism classes of objects. This process is called rigidification \cite{AOV}. In the complex 
group case, the sheaf of isomorphism classes is the GIT quotient $\g\sslash G$. 
The situation in the real group case is somewhat different, as $\left[\mr/H\right]$ fails to be locally connected over the
GIT quotient $\la/W(\la)\cong\m\sslash H$. This is due to the fact that $\la/W(\la)$ does not always parameterise $H$-orbits, but rather orbits of the larger group $N_G(G_\R)$. So in the forthcoming sections we analyse both possible gerby structures, first by rigidifying (Section \ref{sec:GR_Higgs}) and secondly by increasing the automorphism group (Section \ref{sec:NGR_Higgs}). While the first approach is valid for any strongly reductive group by Lemma \ref{C is smooth} and \cite[Theorem A.1]{AOV}, the second is totally determined by real forms (by Lemmas \ref{lm:quasisplit real forms} and \ref{lm:mGtheta_are_NGR}).

\subsection{The case of real forms}\label{sec:GR_Higgs}

When $G_\R$ is a real form,  the following example shows that we cannot expect for \eqref{eq lshit} to induce a gerbe structure. 
\begin{ex}\label{ex orbits sl2}
 Let $G_\R=\SL(2,\R)$. By suitably choosing the involutions, we may identify
 $\SO(2,\C)$ with diagonal matrices, and $\lie{sym}_0(2,\C)$ (the subspace of zero traced symmetric matrices) with off diagonal matrices. 
 
 The Chevalley morphism is $\det: \lie{sym}_0(2,\C)\to \C$. Nilpotent elements are precisely $\det^{-1}(0)$. Now, all elements
 of $\lie{sym}_0(2,\C)\setminus 0$ are regular. The action of $\SO(2,\C)$ on $\lie{sym}_0(2,\C)$ is given by:
 \begin{equation}\label{eq action SO2}
\lambda\cdot  \left(\begin{array}{cc}
        0&\beta\\
       \gamma&0
       \end{array}
\right)=\left(\begin{array}{cc}
        0&\lambda^2\beta\\
       \lambda^{-2}\gamma&0
       \end{array}
\right).  
 \end{equation}
So we see that for nilpotent elements there are two maximal open orbits (with $0$ in the closure of both): upper and lower triangular matrices. These however become 
one via conjugation by
$\left(\begin{array}{cc}
        0&i\\
       i&0
       \end{array}
\right)$, which together with $\SL(2,\R)$ generates the group $N:=N_{\SL(2,\C)}(\SL(2,\R))$,
 which thus fits into an exact sequence
$$
1\to \SL(2,\R)\to N\to\Z_2\to 1.
$$
Now, semisimple elements in $\lie{sym}_0(2,\C)$ are elements with both off diagonal entries different to
zero; hence by suitably choosing $\lambda\in \C^\times$ in (\ref{eq action SO2}), 
we see that for any such element $x$, $N\cdot x=\SO(2,\C)\cdot x$.
\end{ex}
Hence,  the Hitchin map does not  induce a gerbe structure, but we may replace the Hitchin base by the space of orbits as follows. 

Consider the sheaf
$\mathbf{A}$ associated to the following presheaf:
\begin{equation}\label{eq pi0m}
\mathbf{A}':Sch\longrightarrow Sets\qquad S\mapsto\{\textrm{isomorphism classes in }\stack{\mr/H}(S)\}.
\end{equation}
There is a surjective morphism $\stack{\mr/H}\to \mathbf{A}$. Moreover:
\begin{lm}\label{lm gerbe 1}
 $\stack{\mr/H}\to \mathbf{A}$ is a gerbe banded by the space fibreed in abelian groups $J^\theta\longrightarrow S$ obtained by faithfully flat descent from $C^\theta$.
\end{lm}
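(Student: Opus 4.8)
The plan is to realize $\stack{\mr/H}\to\mathbf{A}$ as the rigidification of $\stack{\mr/H}$ along its inertia, and then to read off the band by descending $C^\theta$. The essential structural input is that the inertia is \emph{fibrewise abelian}. Indeed, by Lemma \ref{inertia} the inertia stack of $\stack{\mr/H}$ is the group stack represented by $C^\theta\to\mr$ together with its $H$-equivariant structure, and by Lemma \ref{C is smooth} the morphism $C^\theta\to\mr$ is smooth, hence flat and of finite presentation. Moreover, since $G_\R$ is quasi-split we have $\mr=\m\cap\gr$ (Remark \ref{rk mr c gr}), so every $x\in\mr$ is regular in $\g$; its centraliser $C_G(x)$ is then commutative (regular centralisers are abelian, cf. Appendix \ref{sec:reg_central}), and therefore so is its subgroup $C^\theta_x=C_G(x)^\theta$. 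Thus the inertia of $\stack{\mr/H}$ is a flat, finitely presented, fibrewise abelian group stack.

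First I would invoke the rigidification theorem \cite[Theorem A.1]{AOV} for $\stack{\mr/H}$ with respect to its full inertia $C^\theta$, which is admissible precisely because $C^\theta$ is flat, finitely presented and, being fibrewise abelian, normal (indeed central) in the inertia. This yields a gerbe $\stack{\mr/H}\to Y$ onto an algebraic space $Y$. Rigidifying by the entire inertia annihilates every automorphism, so $Y$ has trivial inertia and is the fppf-sheafification of the presheaf of isomorphism classes; by the definition \eqref{eq pi0m} of $\mathbf{A}$ this $Y$ is exactly $\mathbf{A}$, and the rigidification morphism is identified with the canonical surjection $\stack{\mr/H}\to\mathbf{A}$. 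This establishes the gerbe assertion.

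It remains to identify the band. By the universal property of rigidification, the band of $\stack{\mr/H}\to\mathbf{A}$ is the group on $\mathbf{A}$ whose pullback to $\stack{\mr/H}$ is the inertia $C^\theta$. Since the atlas $\mr\to\stack{\mr/H}$ is an $H$-torsor and $\stack{\mr/H}\to\mathbf{A}$ is a gerbe, the composite $\mr\to\mathbf{A}$ is faithfully flat, and along it $C^\theta$ is the pullback of the band; hence the band is obtained from $C^\theta\to\mr$ by faithfully flat descent, which is by definition $J^\theta\to\mathbf{A}$. Its fibres are the descents of the abelian fibres $C^\theta_x$, so $J^\theta$ is a sheaf of abelian groups, as required.

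I expect the genuine difficulty to lie in the descent step rather than in the formal rigidification. The space $\mathbf{A}$ is non-separated --- the two maximal nilpotent orbits of Example \ref{ex orbits sl2} give rise to inseparable points --- and $C^\theta\to\mr$ is flat but not proper, so one must check that the descent datum on $C^\theta$ furnished by its $H$-equivariant structure (the adjoint action lifting the isotropy action) is effective, and that the descended object is an honest group scheme rather than merely a sheaf of groups over the glued base. The care is thus in producing the faithfully flat descent cocycle along $\mr\to\mathbf{A}$ and verifying its effectivity despite the non-separatedness of the target.
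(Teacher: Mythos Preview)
Your proposal is correct and follows exactly the paper's approach: the paper's proof is simply ``This is just \cite[Theorem A.1]{AOV}, where the group stack by which one rigidifies is the full inertia stack, which applies by Lemma \ref{C is smooth}.'' You have merely unpacked this one-line invocation, supplying the abelianness of the fibres via quasi-splitness and spelling out the identification of the rigidification with $\mathbf{A}$; your final paragraph of caution about effectivity of descent is unnecessary here, since the AOV rigidification machinery already produces the band as a sheaf over $\mathbf{A}$, which is all the statement claims.
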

\begin{proof}
 This is just \cite[Theorem A.1]{AOV}, where the group stack by which one rigidifies it the full inertia stack, which applies by Lemma \ref{C is smooth}. Note this applies to any strongly reductive quasi-split real group.
\end{proof}
\begin{rk}\label{rk pi0}
 Note that $\mathbf{A}$ is the orbit space of the isotropy representation. In the particular case of $\SL(2,\R)$ this is
 the non separated scheme consisting of a line with a double origin. In general, it will be a non separated algebraic
 space.
\end{rk}
\subsection{The associated gerbe of $N_{G}(G_{\R})$-Higgs bundles}\label{sec:NGR_Higgs}
The Hitchin map \eqref{eq lshit} fails to be a gerbe even when restricted to the regular locus, but by enlarging the automorphisms
slightly we obtain a nicer structure.
\begin{defi}\label{defi Gtheta}
 Let $\theta$ be the holomorphic involution on $G$ associated to the real form $G_\R$, in such a way that $G^\theta=H$.  Consider the algebraic group
 $$
G_\theta=\{g\in G\ :\ g^{-1}g^\theta\in Z(G)\}.
 $$
 \end{defi}
 We will see that the stack $\stack{\mr/\Gtheta}$ is a gerbe under the local Hitchin map
 \begin{equation}\label{eq lshittheta}
 \lshit:\stack{\m/\Gtheta}\to\la/W(\la), 
 \end{equation}
 obtained by $\Gtheta$ equivariance of \eqref{eq Chevalley r}.
 \begin{lm}\label{lm:mGtheta_are_NGR}
 	Let $N_{G}(G_\R)$ be the normaliser inside $G$ of the real form $G_\R$. Then
 	$\stack{\mr/\Gtheta}$ is the stack of local $N_G(G_\R)$-Higgs bundles.
 \end{lm}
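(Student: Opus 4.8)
The plan is to prove the equality $\stack{\mr/\Gtheta}\cong\stack{\mr/N_G(G_\R)}$ at the level of the acting groups, by showing $\Gtheta=N_G(G_\R)$, after which the identification of the stacks (together with the identification of the local Hitchin map) is immediate since the isotropy action on $\mr$ restricts from the same ambient $G$-action. By Lemma \ref{lm:quasisplit real forms} we may freely replace $G_\R$ by an isogenous real form $G^\sigma$ without changing the normaliser, so it suffices to argue for a genuine real form, where $\sigma$ is an (anti-holomorphic) involution and $\theta=\sigma\tau$ is the associated holomorphic involution with $G^\theta=H$.

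First I would unwind the definition $\Gtheta=\{g\in G:g^{-1}g^\theta\in Z(G)\}$ at the level of Lie algebras and adjoint actions. The key observation is that $\theta$ and $\sigma$ induce the \emph{same} involution on the adjoint group $G/Z(G)$, since they differ by the inner-acting compact form involution $\tau$ whose relevant content on $\g_\R$ is captured by $\theta$; more precisely $\g_\R=\g^\sigma$ and $\theta|_{\g_\R}$ is the Cartan involution, so both $\sigma$ and $\theta$ preserve the real form $\g_\R\subset\g$ and act on it identically up to the compact-form data. The condition $g^{-1}g^\theta\in Z(G)$ says exactly that $\Ad(g)$ commutes with $\Ad\circ\theta$, i.e.\ that conjugation by $g$ preserves $\theta$ as an automorphism of the adjoint group. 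I would then show that preserving $\theta$ (equivalently $\sigma$) is equivalent to normalising the fixed-point real form $G_\R=G^\theta$ inside $G$. The forward inclusion $\Gtheta\subset N_G(G_\R)$ follows because $g^{-1}g^\theta\in Z(G)$ forces $\Ad(g)$ to conjugate the $+1$-eigenspace data of $\theta$ to itself, hence $gG_\R g^{-1}$ has the same Lie algebra $\g_\R$; invoking that $N_G(G_\R)=N_G(\g_\R)$ (established in Lemma \ref{lm:quasisplit real forms}\eqref{it:same_normal}) closes this direction.

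For the reverse inclusion $N_G(G_\R)\subset\Gtheta$, I would take $g\in N_G(\g_\R)$ and show $g^{-1}g^\theta\in Z(G)$. Since $g$ normalises $\g_\R=\g^\theta$, conjugation by $g$ sends $\theta$ to another involution $\theta'=\Ad(g)\circ\theta\circ\Ad(g)^{-1}$ with the same fixed Lie subalgebra $\g_\R$ and the same $-1$-eigenspace $\m$; as $\theta$ and $\theta'$ agree on $\g=\h\oplus\m$ (both are $+1$ on $\h$ and $-1$ on $\m$ by the Cartan decomposition of equation \eqref{eq Cartan dec}), they induce the same automorphism of $\g$, hence the same inner automorphism, which forces $g^{-1}g^\theta$ to act trivially by the adjoint representation, i.e.\ to lie in $Z(G)$. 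Once $\Gtheta=N_G(G_\R)$ is established, I would note that the isotropy action of $H$ on $\m$ extends to $\Gtheta=N_G(G_\R)$ precisely because this normaliser preserves $\m=\g^{-\theta}_{\phantom{}}$, so the quotient stack $\stack{\mr/\Gtheta}$ literally is the stack of local $N_G(G_\R)$-Higgs bundles, matching the ``abstract $N_G(G_\R)$-Higgs bundle'' description $(E,\sigma)$ alluded to in the introduction.

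The main obstacle I anticipate is the careful bookkeeping with the centre $Z(G)$ and the isogeny reduction: the defining condition uses $Z(G)$ rather than demanding $g^\theta=g$ outright, so I must verify that every element normalising $\g_\R$ genuinely lands in $\Gtheta$ and not merely in some larger coset, and conversely that the central ambiguity does not enlarge $\Gtheta$ beyond $N_G(G_\R)$. This is exactly where Lemma \ref{lm:quasisplit real forms} does the heavy lifting, allowing me to pass between $G_\R$ and the honest real form $G^\sigma$ where $\theta$ is literally an involution with $G^\theta=H$; the remaining work is to confirm that the equivalence ``$\Ad(g)$ commutes with the adjoint action of $\theta$'' $\Leftrightarrow$ ``$\Ad(g)$ preserves $\g_\R$'' holds with the centre absorbed correctly, which reduces to the faithfulness of the adjoint representation modulo $Z(G)$.
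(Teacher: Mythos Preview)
Your approach rests on the identity $\Gtheta = N_G(G_\R)$ as subgroups of $G$, but this is false, and the stack of local $N_G(G_\R)$-Higgs bundles is \emph{not} $\stack{\mr/N_G(G_\R)}$ in the first place. Recall from Section~\ref{sec:preliminaries} that for a real strongly reductive group $G'_\R$, a local $G'_\R$-Higgs bundle lives in $\stack{\m'/H'}$, where $H'$ is the \emph{complexification of the maximal compact subgroup} of $G'_\R$ and $\m'$ is the $(-1)$-eigenspace of its Cartan involution. So the statement to be proved is that $\Gtheta$ is the complexified maximal compact of $N_G(G_\R)$ (and that the isotropy piece is still $\m$), which is exactly what the paper does: it identifies $N_U(G_\R)$ as the maximal compact and checks $N_U(G_\R)^\C=\Gtheta$ by reducing to the equality $Z_G(G_\R)=Z(G)$.

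To see concretely that $\Gtheta\neq N_G(G_\R)$, take $G=\SL(2,\C)$, $G_\R=\SL(2,\R)$ as in Example~\ref{ex orbits sl2}. Then $\Gtheta=\{g:g^Tg=\pm I\}$ is a complex group of complex dimension~$1$ (real dimension~$2$), whereas $N_G(G_\R)$ has real Lie algebra $\sl(2,\R)$ of real dimension~$3$; in fact $\SO(2,\C)\subset\Gtheta$ does not even normalise $\SL(2,\R)$, so the inclusion $\Gtheta\subset N_G(G_\R)$ already fails. The source of the error in your argument is a systematic conflation of the holomorphic involution $\theta$ with the antiholomorphic involution $\sigma$: the $(+1)$-eigenspace of $\theta$ on $\g$ is $\h$, not $\g_\R=\g^\sigma$, so ``$\Ad(g)$ commutes with $\theta$'' gives $\Ad(g)(\h)=\h$, not $\Ad(g)(\g_\R)=\g_\R$. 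Likewise, in your reverse inclusion you write ``$\g_\R=\g^\theta$'', which is incorrect. The passage between $\sigma$ and $\theta$ only works after restricting to the compact group $U$ (where $\tau$ is trivial), which is precisely why the paper argues at the level of the maximal compact $N_U(G_\R)$ rather than $N_G(G_\R)$ itself.
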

 \begin{proof}
 	Since 
 	$\mathfrak{n}_\g(\g_\R)=\g_\R+\z(\g)\cap {i\g_{\R}}$, all we need to check is that the maximal compact subgroup of $N_{G}(G_\R)$ complexifies to $\Gtheta$.
 	
 	Let $G_\R=G^\sigma$, and let $U<G$ be maximal compact subgroup defined by an involution $\tau$ commuting with $\sigma$. Then $\theta=\sigma\tau$ and
 	$$
 	N_G(G_\R)=\{g\in G\,:\, g^{-1}g^\sigma\in Z_G(G_\R)\}.
 	$$ 
 	Thus its maximal compact subgroup is
 	$$
 		N_U(G_\R)=\{g\in U\,:\, g^{-1}g^\sigma\in Z_G(G_\R)\}=\{g\in U\,:\, g^{-1}g^\theta\in Z_G(G_\R)\}.
 	$$
 	Hence, if we check that $Z_G(G_\R)=Z(G)$, the result follows, as
 	$$
 	\Gtheta\cap U=\{g\in U\,:\,  g^{-1}g^\theta\in Z(G)\}.
 	$$
 	First, $G$ is strongly reductive (reductive in the sense of \cite[\S VII.2]{K}), so by definition $G$ acts on its Lie algebra by inner automorphisms, and so $Z_G(G_\R)$ centralises $\g_\R$, and so also $\g$. But then it centralises $G^0=G$.
\end{proof}
  \begin{rk}\label{rk:Gtheta_real_bundles}
 In fact, we may obtain a gerbe in different ways from the local Hitchin map \eqref{eq lshit}. Independently of whether  $G$ is strongly reductive or not, the stack of everywhere regular $N_G(G_\R)$-Higgs bundles defines a gerbe over the same space as $\stack{\mr/\Gtheta}$, since the space of orbits in $\mr$ under $\Gtheta$ and $N_U(G_\R)^\C$ is the same in virtue of Propositions \ref{prop orbits in laW} and \cite[Proposition 3.21]{HKR}.
  \end{rk}
Now, on  $\mr$, we consider the following group scheme:
\begin{equation}\label{eq Ctheta}
 C_\theta=\{(m,h)\in \mr\times G_\theta\ |\ h\cdot m=m\}. 
\end{equation}
\begin{lm}
 The group scheme $C_\theta\to\mr$ is a smooth abelian group scheme.
\end{lm}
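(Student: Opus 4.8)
The plan is to mirror the proof of Lemma~\ref{C is smooth} for the enlarged group $\Gtheta$, the only genuinely new point being commutativity, which I will extract from the abelianness of complex regular centralisers. I first record two structural facts about $\Gtheta$. Differentiating the defining condition $g^{-1}g^\theta\in Z(G)$ at the identity gives $\Lie(\Gtheta)=\h\oplus(\z(\g)\cap\m)$, so that $\Lie(\Gtheta)$ differs from $\h$ only by the central summand $\z(\g)\cap\m$. Moreover, for $g\in\Gtheta$ one has $g^\theta=gz$ with $z\in Z(G)$, whence $\theta\circ\Ad(g)=\Ad(g)\circ\theta$; thus $\Ad(g)$ preserves the decomposition $\g=\h\oplus\m$, so that $\Gtheta$ genuinely acts on $\m$ and $C_\theta$ is a closed subgroup scheme of the constant group scheme $\mr\times\Gtheta$.

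For smoothness I would run the tangent space computation of Lemma~\ref{C is smooth} with $f(x,h)=\Ad(h)(x)-x$ now defined on $\mr\times\Gtheta$. The differential of the projection $C_\theta\to\mr$ is again $(y,\xi)\mapsto y$, so smoothness reduces to the surjectivity statement
\[
\{\,y\in\m\ :\ y-\Ad(h^{-1})(y)\in\ad(x)(\Lie(\Gtheta))\,\}=\m.
\]
The key observation is that $\ad(x)$ annihilates the extra central directions $\z(\g)\cap\m$, so $\ad(x)(\Lie(\Gtheta))=\ad(x)(\h)=[x,\h]$ and the condition is \emph{identical} to the one appearing in Lemma~\ref{C is smooth}. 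As there, the decomposition $\m=[x,\h]\oplus\lc_\m(x)$ is preserved by $\Ad(h)$ (since $h$ centralises $x$ and, lying in $\Gtheta$, preserves $\h$), and on the summand $\lc_\m(x)$ one invokes quasi-splitness: because $x\in\mr\subset\gr$ is regular, $\lc_\g(x)$ is abelian, so $\lc_\h(x)$ centralises $\lc_\m(x)$; as $\Lie(Z_{\Gtheta}(x)^0)=\lc_\h(x)\oplus(\z(\g)\cap\m)$ differs from $\lc_\h(x)$ only by central terms, the identity component $Z_{\Gtheta}(x)^0$ acts trivially on $\lc_\m(x)$, giving $z-\Ad(h^{-1})(z)=0\in[x,\h]$. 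The non-connected case is disposed of exactly as in Lemma~\ref{C is smooth}: the fibres are algebraic groups, hence smooth in characteristic zero, and the computation above shows their dimension is locally constant.

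For commutativity I would use that quasi-splitness forces $\mr=\m\cap\gr$ (Remark~\ref{rk mr c gr}), so every $x\in\mr$ is regular in $\g$. The centraliser $Z_G(x)$ of a regular element of a reductive Lie algebra is an abelian algebraic group---this is the basic fact underlying the theory of regular centralisers (cf.\ Appendix~\ref{sec:reg_central}). Hence the fibre $Z_{\Gtheta}(x)=Z_G(x)\cap\Gtheta$ is a subgroup of an abelian group, and $C_\theta\to\mr$ is a commutative group scheme.

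I expect the commutativity assertion to be the only real subtlety, since it is the one ingredient with no counterpart in Lemma~\ref{C is smooth}; it hinges on the inclusion $\mr\subset\gr$ imposed by quasi-splitness, which is precisely what lets us import the abelianness of the ambient complex centraliser $Z_G(x)$. The smoothness, by contrast, is a routine adaptation once one notes that passing from $H$ to $\Gtheta$ adds only central directions, which drop out of the $\ad(x)$-image.
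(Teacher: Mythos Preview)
Your proposal is correct and follows essentially the same route as the paper: commutativity is deduced from $\mr\subset\gr$ (quasi-splitness) and the abelianness of $C_G(x)$ for regular $x$, while smoothness is reduced to Lemma~\ref{C is smooth} via the observation that $\Lie(\Gtheta)=\h\oplus(\z(\g)\cap\m)$, so that $\ad(x)(\Lie(\Gtheta))=[x,\h]$. Your write-up is simply more explicit than the paper's, which compresses both points into a couple of lines.
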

\begin{proof}
 Commutativity follows from quasi-splitness, as for any $x\in\gr$ $C_{G}(x)$ is abelian (cf. Remark \ref{rk mr c gr}) and $(C_\theta)_x\subset C_{G}(x)$.
 
 For smoothness of the morphism, it is enough to observe that $\g_\theta=\h\oplus\z_{\m}(\g)$, so the proof follows from the same argumenst used in Lemma \ref{C is smooth}
 
 
 

\end{proof}
\begin{lm}\label{lm exact seq Gtheta}
There is an exact sequence 
\begin{equation}\label{eq SES Gtheta}
1\to H\to \Gtheta\to F^2\to 0,\end{equation}
 where $F=\{a\in \exp (\la)\ :\ a^2\in Z(G)\}$
and the map $\Gtheta\to F^2$ is given by $g\mapsto g^{-1}g^\theta$.
 Moreover, $\Gtheta=FH$.
\end{lm}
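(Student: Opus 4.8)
The plan is to realise $g\mapsto g^{-1}g^\theta$ as a surjective homomorphism $\phi\colon\Gtheta\to F^2$ with kernel $H$, and then read off both assertions at once. By Definition \ref{defi Gtheta} we have $\phi(g)=g^{-1}g^\theta\in Z(G)$ for every $g\in\Gtheta$, and since $\phi(g)$ is central,
$$\phi(g_1g_2)=g_2^{-1}g_1^{-1}g_1^\theta g_2^\theta=g_2^{-1}\phi(g_1)g_2^\theta=\phi(g_1)\,g_2^{-1}g_2^\theta=\phi(g_1)\phi(g_2),$$
so $\phi$ is a homomorphism into the abelian group $Z(G)$. Its kernel is $\{g:g^\theta=g\}=G^\theta=H$ (cf. Remark \ref{rk:notation}). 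Applying $\theta$ gives $\theta(\phi(g))=\phi(g)^{-1}$, so the image is constrained to $\{z\in Z(G):\theta(z)=z^{-1}\}$.

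Next I would pin down $F^2$. Write $A=\exp(\la)$; since $\la=\lt\cap\m$ is the $(-1)$-eigenspace of $\theta$ on $\lt$, one has $\theta(a)=a^{-1}$ for $a\in A$ and $A$ is a subtorus of $T$. Hence for $a\in A$, $\phi(a)=a^{-1}a^\theta=a^{-2}$, which lies in $Z(G)$ exactly when $a\in F$; thus $F=A\cap\Gtheta$ and $\phi(a)=a^{-2}$ on $F$. As $A$ is a torus it is divisible, so squaring maps $A$ onto $A$ and therefore $F^2=\{a^2:a\in F\}=Z(G)\cap A$. In particular $F\subseteq\Gtheta$, and since $F$ is stable under inversion $\phi(F)=\{a^{-2}:a\in F\}=F^2$, giving the inclusion $F^2\subseteq\Ima\phi$.

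The core of the argument is the reverse inclusion $\Ima\phi\subseteq F^2$, that is, that the central element $z=g^{-1}g^\theta$ in fact lies in $A$. Here I would invoke the Cartan (polar) decomposition $G=H\,A\,H$ for the complex symmetric pair $(G,H=G^\theta)$, with $\la$ maximal abelian in $\m$ (see Appendix \ref{sec:Lie_th}; cf. \cite[\S VII]{K}). Writing $g=h_1ah_2$ with $h_i\in H$, $a\in A$, and using $h_i^\theta=h_i$, $a^\theta=a^{-1}$,
$$\phi(g)=h_2^{-1}a^{-1}h_1^{-1}h_1^\theta a^\theta h_2^\theta=h_2^{-1}a^{-2}h_2=\Ad(h_2^{-1})(a^{-2}).$$
Since $\phi(g)=z$ is central, $z=\Ad(h_2)(z)=a^{-2}\in A$, whence $z\in Z(G)\cap A=F^2$. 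I expect the main obstacle to be precisely the justification of $G=HAH$ in the complex-algebraic setting; this should reduce to the Kostant--Rallis statement that regular semisimple elements of $\m$ are $H$-conjugate into $\la$ (equivalently $\m/\!/H\cong\la/W(\la)$, already underlying \eqref{eq Chevalley r}, cf. \cite{KR71}) together with density of the regular locus, or alternatively to the Cartan decomposition of a real form of $G$.

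Combining the two inclusions gives $\Ima\phi=F^2$, which is exactly the exactness of \eqref{eq SES Gtheta}. Finally, for the ``moreover'' part, take $g\in\Gtheta$; then $\phi(g)\in F^2=\phi(F)$ equals $\phi(a)$ for some $a\in F$, so $\phi(a^{-1}g)=\phi(a)^{-1}\phi(g)=1$ and hence $a^{-1}g\in\ker\phi=H$, giving $g=a(a^{-1}g)\in FH$. Since $F\subseteq\Gtheta$ and $H\subseteq\Gtheta$, the containment $FH\subseteq\Gtheta$ is immediate, and therefore $\Gtheta=FH$.
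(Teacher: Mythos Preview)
Your overall framework is exactly right and matches the paper's: realise $\phi(g)=g^{-1}g^\theta$ as a homomorphism with kernel $H=G^\theta$, then identify its image as $F^2$. Your computation $F^2=Z(G)\cap A$ is correct, and your derivation of $\Gtheta=FH$ directly from surjectivity of $\phi$ onto $\phi(F)=F^2$ is in fact cleaner than the paper's hands-on verification of that last point.

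The genuine gap is in the step $\Ima\phi\subseteq F^2$. The decomposition $G=HAH$ you invoke is \emph{false} for complex symmetric pairs. Already for $G=\SL(2,\C)$, $\theta(g)=(g^T)^{-1}$, $H=\SO(2,\C)$ and $A$ the diagonal torus: if $g=h_1ah_2\in HAH$ then $g^Tg=h_2^{-1}a^2h_2$ is semisimple, yet there exist $g\in\SL(2,\C)$ with $g^Tg=\left(\begin{smallmatrix}2&i\\i&0\end{smallmatrix}\right)$, a non-semisimple symmetric matrix of determinant~$1$. Hence $HAH\subsetneq G$, and your first proposed fix (Kostant--Rallis conjugacy into $\la$ plus density of the regular locus) cannot rescue a statement that simply fails. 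Note also that your preliminary observation $\theta(\phi(g))=\phi(g)^{-1}$ only forces $\phi(g)\in T$ to lie in $\{da:d\in D,\,d^2=1,\,a\in A\}$, which need not be contained in $A$.

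Your second suggestion, to pass through the Cartan decomposition of a real form of $G$, is precisely what the paper does. It views $G$ as a real Lie group with maximal compact $U$, uses the polar decomposition $g=ue^{iV}$ with $u\in U$, $V\in\lu$, and then applies a genuine $KAK$ decomposition to the \emph{compact} group $U$ via Lemma~\ref{lm KA_uK}, writing $u=h_1ah_2$ with $a\in A_u=\exp(i\la_\R)$. The condition $g^{-1}g^\theta\in Z(G)$ is then analysed separately on the $u$-factor (yielding $a^{-2}\in Z(G)$, exactly your computation) and on the $e^{iV}$-factor (yielding $e^{i(V-\theta V)}\in \exp(\la)$). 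The moral is that one needs compactness somewhere to obtain an honest $KAK$-type decomposition; the shortcut through $G=HAH$ does not exist.
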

\begin{proof}
	By Lemma \ref{lm KA_uK} and connectivity of $G$, its maximal compact subgroup $U$ has the form $HA_uH$. Now, let $g\in \Gtheta$, and let $g=ue^{iV}$ be its polar decomposition, with $u\in U$, $V\in \lie{u}$. By uniqueness of the latter and reductivity of
	$Z(G)$ (see \cite[Corollary 7.26]{K}), 
	$$
	g^{-1}g^\theta\in Z(G)\iff u^{-1}u^\theta\in Z_U(G), e^{-iV}e^{i\theta V}=e^{-i(V-\theta V)}\in e^{i\lie{z(u)}}.
	$$   
	But now 
	$u=h_1ah_2$ for $h_1$, $h_2\in H$, $a\in A_u$, so 
	$$u^{-1}u^\theta=\Ad(h_2^{-1})(a^{-2})\in Z_U(G)\iff 
	\Ad(h_2^{-1})(a^{-2})=a^2\in Z_U(G).
	$$ 
	On the other hand, given that $i\lu=i\h\oplus \m$, we have ${i(V-\theta V)}\in  \m$, so  ${i(V-\theta V)}= \Ad(h)(iX)$ for some $h\in H$, $X\in i\la$ by \cite[Lemma 7.29]{K}. So clearly, 
	$$e^{i(V-\theta V)}\in e^{i\lie{z(\lu)}}\iff e^{i(V-\theta V)}=e^{iX}\in ie^{\lie{z(\lu)\cap\m}}\subset e^{\la}
	$$
	where the last inclusion follows by maximality of
	$\la$. Since clearly $F\subset\Gtheta$, exactness of the sequence follows. 
	
	To see that $\Gtheta=FH$,  first note that $g=h_1ah_2 e^{iV}$ for $h_i\in H$, $a\in F$. So all we need to check is $e^{iV}\in FH$. Write $V=V_h+V_m$, $V_h\in \h$, $V_m\in i\m$. Since $V-\theta V=2 V_m\in i\la\cap\z(\lu)$, it follows that $e^{iV}=e^{iV_j}e^{iV_m}$, with $e^{iV_j}\in H,\ e^{iV_m}\in F$.
\end{proof}
\begin{cor}\label{lm exact seq Ctheta}
	The scheme $C_\theta$ fits into an exact sequence 
	$$
	1\to C^\theta\to C_\theta\stackrel{p}{\to} \mc{F}^2\to 0.
	$$
	where $\mc{F}^2\subset \mr\times F^2$ is the group scheme defined as follows:
\begin{enumerate}
	\item\label{mcFrs} $\mc{F}^2|_{\mrs}=\mrs\times F^2$.
	\item\label{mcFrn} $\mc{F}^2_x=\{y^2\,:\,y\in Z(C_{\Gtheta}(\Ad(h)(x_s)))\cap F\}$, where $x=x_s+x_n$ is the Jordan decomposition and $h\in H$ is such that $\Ad(h)(x_s)\in \la$. 
\end{enumerate}
\end{cor}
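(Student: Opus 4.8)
The plan is to transport the exact sequence of Lemma \ref{lm exact seq Gtheta} to the stabiliser group schemes over $\mr$. Write $q\colon\Gtheta\to F^2$ for the homomorphism $q(g)=g^{-1}g^\theta$ of that lemma, and set
$$
p\colon C_\theta\longrightarrow \mr\times F^2,\qquad p(x,g)=(x,q(g)).
$$
First I would check that $p$ is a morphism of group schemes over $\mr$. Since $q(g)\in F^2\subset Z(G)$ is central, one has fibrewise $q(g_1g_2)=g_2^{-1}q(g_1)g_2^\theta=q(g_1)\,g_2^{-1}g_2^\theta=q(g_1)q(g_2)$, so on the fibre over $x$ the map $p$ is the group homomorphism $q|_{C_{\Gtheta}(x)}$. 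Its kernel is $C_{\Gtheta}(x)\cap\ker q=C_{\Gtheta}(x)\cap H=C_H(x)$, i.e.\ the fibre of $C^\theta$; hence $\ker p=C^\theta$ and the sequence is left exact. Defining $\mc{F}^2:=\Ima(p)$ as the scheme-theoretic image, a subgroup scheme of $\mr\times F^2$ since $F^2$ is central, makes $p$ surjective onto $\mc{F}^2$ by construction. It then only remains to identify the fibres $\mc{F}^2_x=q(C_{\Gtheta}(x))$ with the two descriptions in the statement.

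For that computation, recall from the proof of Lemma \ref{lm exact seq Gtheta} that any $g=fh$ with $f\in F$, $h\in H$ has $q(g)=f^{-1}f^\theta=f^{-2}$: indeed $f\in\exp(\la)$ and $\theta|_\m=-1$ give $f^\theta=f^{-1}$, while $f^2\in Z(G)$ kills the conjugation by $h$. Thus $\mc{F}^2_x$ is the set of classes $f^{-2}$ with $f\in F$ such that $fh\in C_{\Gtheta}(x)$ for some $h\in H$. As $q$ is invariant under $H$-conjugation, I may conjugate $x_s$ into $\la$ and assume $x_s\in\la$; this also shows $\mc{F}^2_x$ is independent of the chosen $h$. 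When $x=x_s\in\mrs$, Remark \ref{rk mr c gr} gives $\lc_\m(x)=\la$, so every $f\in F\subset\exp(\la)$ centralises $x$, whence $F\subseteq C_{\Gtheta}(x)$ and $\mc{F}^2_x\supseteq q(F)=F^2$; the reverse inclusion is automatic, proving \ref{mcFrs}.

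For \ref{mcFrn} I would reduce to the reductive $\theta$-stable subgroup $M:=C_{\Gtheta}(x_s)\supseteq F$, whose symmetric pair has $\m$-part $\lc_\m(x_s)$. Since $\Ad(g)$ preserves the Jordan decomposition, $C_{\Gtheta}(x)=C_M(x_n)$, and regularity of $x$ forces $x_n$ to be a regular nilpotent element of $\lc_\m(x_s)$. The claim \ref{mcFrn} then amounts to the identity $q(C_M(x_n))=\{y^{-2}:y\in Z(M)\cap F\}$, equal to $\{y^2:y\in Z(M)\cap F\}$ as $Z(M)\cap F$ is a group. I would prove the two inclusions separately.

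The hard part is establishing this identity, and it is where the content of the corollary lies. For $\supseteq$ one must show that a central element $y\in Z(M)\cap F$ actually fixes the nilpotent $x_n$ (so that $y\in C_M(x_n)$ and $q(y)=y^{-2}$ is realised); this is not automatic, since $x_n\notin\Lie M$ in general, and I expect it to follow from the restricted roots occurring in $\lc_\m(x_s)$ vanishing on $\log y$ whenever $y$ is central in $M$. For $\subseteq$ one takes $g=fh\in C_M(x_n)$ and must show $f$ lies in $Z(M)\cdot(F\cap H)$; unwinding $fh\in C_M(x_n)$ turns the condition into the requirement that $\Ad(f^{-1})(x_n)$ and $x_n$ be regular nilpotents of $\lc_\m(x_s)$ in the same $C_H(x_s)$-orbit, and then the facts that $f\in F$ is semisimple with $\Ad(f)$ an involution (because $f^2\in Z(G)$) and that the semisimple part of the centraliser of a regular nilpotent lies in the centre force $f$ into $Z(M)$ modulo $F\cap H=\ker(q|_F)$. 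The genuine obstacle is the orbit-theoretic input — the Kostant--Rallis description of regular nilpotent orbits in the symmetric pair of $M$ — needed to control precisely which classes $f^{-2}$ survive once the auxiliary factor $h$ is allowed; the rest is formal bookkeeping with the exact sequence of Lemma \ref{lm exact seq Gtheta}.
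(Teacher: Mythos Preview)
Your approach coincides with the paper's: both transport the exact sequence of Lemma~\ref{lm exact seq Gtheta} fibrewise to $\mr$, identify $\ker p=C^\theta$, handle the semisimple locus by noting that $F\subset\exp(\la)$ already centralises any regular $x\in\la$ (the paper phrases this as $C_\theta|_{\lar}=\lar\times T_\theta$ and then uses $H$-conjugacy into $\la$), and treat the general fibre via the Jordan decomposition together with the structural fact that the centraliser of a regular nilpotent is centre times unipotent. The paper's argument for \eqref{mcFrn} is in fact terser than yours: it simply asserts that $C_{\Gtheta}(x_s)\cap C_{\Gtheta}(x_n)$ is ``the center of $C_{\Gtheta}(x_s)$ (since centralisers of nilpotent elements are unipotent)'' and does not separate out the two inclusions you isolate; in particular your worry about the $\supseteq$ direction---whether $y\in Z(C_{\Gtheta}(x_s))\cap F$ genuinely fixes $x_n$---is not explicitly addressed in the paper's proof either, though the remark following the corollary (describing $Z(C_{\Gtheta}(x_s))\cap F$ via the subgroups $F\cap\Ker\alpha$ for roots $\alpha$ with $\alpha(x_s)=0$) is meant to supply the missing root-theoretic input you anticipated.
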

\begin{proof}
	From Lemma \ref{lm exact seq Gtheta}, we have an exact sequence
	$$
	0\to\mr\times H\to\mr\times \Gtheta\to \mr\times F^2.
	$$
	So we need to characterise the image.
	
	Statement \eqref{mcFrs} is clear when restricted to $\lar:=\la\cap\mr$. Indeed,  $C_\theta|_{\lar}=\lar\times T_\theta$, where $T_\theta=C_{G_\theta}(\la)$, so that $T_\theta\supset A\cap T_\theta$. Then, from Lemma \ref{lm exact seq Gtheta} we may conclude.
	
	Suppose that $x\in\mrs$. Then, by \cite[Theorem 1]{KR71}, there exists some $h\in H$ such that $x'=\Ad(h)(x)\in\laC$. Thus, given that $p$ is $H$-equivariant, and that $F^2\subset Z(G)$, we may conclude that \eqref{mcFrs} holds.
	
To see \eqref{mcFrn}, for any $x\in\m$, if $x=x_s+x_n$ is its Jordan decomposition, it follows that $x_s, x_n\in \m$ by \cite[Proposition 3]{KR71}. Also, $C_{\Gtheta}(x)=C_{\Gtheta}(x_s)\cap C_{\Gtheta}(x_n)$. Since $\g_\R$ is quasi-split,  $C_{\Gtheta}(x_n)\cap C_{\Gtheta}(x_s)$ 
is unipotent within $C_{\Gtheta}(x_s)$, so the intersection is the center of $C_{\Gtheta}(x_s)$ (since centralisers of nilpotent elements are unipotent). By the same arguments as used in the semisimple case, the image of this into $F^2$ is independent of the conjugacy class. 
	\end{proof}
	\begin{rk}
		Given $x=x_s+x_n\in\mr$ such that $x_s\in\la$, it follows that for all roots $\alpha$ such that $\alpha(x_s)=0$ $F^\alpha=F\cap \Ker(\alpha)\subset Z(C_\theta(x_s))$. The latter is in fact generated by
		$Z(G)$ and $F^\alpha$ for roots vanishing on $x_s$.
		
		Note also that the semisimple part of $F$ acts on $\mr$ by permuting different $H$ orbits within the 
		same $\Gtheta$ orbit. This happens only for non semisimple elements by \cite[Proposition 3]{KR71}.
	\end{rk}
\begin{prop}\label{prop Ctheta descends}
The scheme $C_\theta$ descends to an affine abelian group scheme $J_\theta$ over $\la/W(\la)$. 
 \end{prop}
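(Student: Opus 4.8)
The plan is to obtain $J_\theta$ by faithfully flat descent of $C_\theta$ along the restricted Chevalley map $\chi\colon\mr\to\la/W(\la)$, imitating Ng\^o's descent of the regular centraliser \cite[\S2]{NgoLemme} and the analogous argument of Donagi--Gaitsgory \cite{DG}. The first task is to check that $\chi|_{\mr}$ is faithfully flat. By Proposition \ref{prop orbits in laW} the fibres of $\chi|_{\mr}$ are exactly the $\Gtheta$-orbits in $\mr$; this gives surjectivity (each fibre is nonempty, e.g. it meets the section of \cite{HKR}) and shows that all fibres are equidimensional, of dimension $\dim\m-\dim\la$. Since $\mr$ is open in the vector space $\m$ and $\la/W(\la)\cong\C^{\dim\la}$ are both smooth, miracle flatness then forces $\chi|_{\mr}$ to be flat, hence faithfully flat, and in particular an fppf cover.

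The heart of the argument is the construction of a canonical descent datum, i.e. an isomorphism $\varphi\colon\mathrm{pr}_1^*C_\theta\xrightarrow{\sim}\mathrm{pr}_2^*C_\theta$ of group schemes over $Z:=\mr\times_{\la/W(\la)}\mr$ satisfying the cocycle condition. Consider the morphism
$$
a\colon \Gtheta\times\mr\longrightarrow \mr\times\mr,\qquad (g,x)\longmapsto (x,\Ad(g)x).
$$
By the transitivity just recalled, the image of $a$ is precisely $Z$, and over $Z$ the map $a$ is an fppf cover whose fibres are torsors under $C_\theta$. Over $\Gtheta\times\mr$ conjugation defines an isomorphism $\mathrm{conj}_g\colon (C_\theta)_x\xrightarrow{\sim}(C_\theta)_{\Ad(g)x}$, i.e. an isomorphism $a^*\mathrm{pr}_1^*C_\theta\to a^*\mathrm{pr}_2^*C_\theta$ of group schemes over $\Gtheta\times\mr$.

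The key point is that this isomorphism descends along $a$. Two points $(g,x)$ and $(g',x)$ lie in the same fibre of $a$ precisely when $g^{-1}g'\in(C_\theta)_x$; since $C_\theta$ is abelian (by the preceding lemma), $\mathrm{conj}_{g^{-1}g'}$ is the identity on $(C_\theta)_x$, so $\mathrm{conj}_g=\mathrm{conj}_{g'}$ there. Thus the two pullbacks of the conjugation isomorphism to $(\Gtheta\times\mr)\times_Z(\Gtheta\times\mr)$ agree, and fppf descent along $a$ produces the desired $\varphi$ over $Z$. The cocycle identity for $\varphi$ over $\mr\times_{\la/W(\la)}\mr\times_{\la/W(\la)}\mr$ follows from $\mathrm{conj}_{gg'}=\mathrm{conj}_g\circ\mathrm{conj}_{g'}$ together with the same commutativity argument.

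Finally, since $C_\theta\to\mr$ is affine (it is a closed subscheme of $\mr\times\Gtheta$) and $\chi|_{\mr}$ is faithfully flat, fppf descent for affine morphisms yields an affine scheme $J_\theta\to\la/W(\la)$ with $\chi^*J_\theta\cong C_\theta$ compatibly with $\varphi$. The fibrewise group law and its commutativity descend as well, because $\varphi$ is assembled from the group isomorphisms $\mathrm{conj}_g$, which are compatible with multiplication and inversion; hence $J_\theta$ is an affine abelian group scheme, as claimed. I expect the only genuinely nontrivial input to be the geometric one feeding the first paragraph, namely that the regular fibres of $\chi$ are single $\Gtheta$-orbits (Proposition \ref{prop orbits in laW}) --- this is exactly where quasi-splitness and the structure theory of the symmetric pair enter; once flatness and transitivity are in hand, the descent formalism is essentially forced by the commutativity of $C_\theta$.
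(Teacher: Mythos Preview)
Your proof is correct and follows essentially the same route as the paper's: both descend $C_\theta$ along the flat Chevalley map $\chi\colon\mr\to\la/W(\la)$, build the descent datum by pulling back along the action map $\Gtheta\times\mr\to\mr\times_{\la/W(\la)}\mr$ (your $a$, the paper's $f$), define the isomorphism by conjugation $((m,h),g)\mapsto((m,h),\Ad_h g)$, and invoke commutativity of $(C_\theta)_m$ to show it is independent of the choice of $h$ and hence descends. The only differences are in emphasis: you supply an explicit miracle-flatness argument for $\chi|_{\mr}$ and note affineness of $C_\theta\to\mr$ to guarantee effectivity of descent (points the paper leaves implicit), while the paper writes out the commutative square over $(\Gtheta\times\mr)\times_Z(\Gtheta\times\mr)\cong\Gtheta\times C_1$ in coordinates rather than phrasing it as ``$\mathrm{conj}_g=\mathrm{conj}_{g'}$ when $g^{-1}g'\in(C_\theta)_x$''. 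One small caution: your citation of Proposition~\ref{prop orbits in laW} gives $(\Ad G)^\theta$-orbits rather than $\Gtheta$-orbits literally, but the paper elsewhere (e.g.\ Proposition~\ref{prop KR} and Lemma~\ref{lm compare orbits}) confirms that the fibres of $\chi|_{\mr}$ are indeed single $\Gtheta$-orbits, so the transitivity you need is available.
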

\begin{proof}
	First of all, note that $\Gtheta$ normalises $\m$, which induces an action on $C_\theta$ making $C_\theta\to\mr$ $\Gtheta$-equivariant. From this point on we can adapt the proof of Lemma 2.1.1 in \cite{NgoLemme} to our context. 
	
	Since  for $x\in\mr$,  $(C_\theta)_x$ is abelian, hence,  we can define the fibre over $\chi(x)\in\la/W(\la)$ to be $(C_\theta)_x$ itself. Any other choice 
	will be canonically isomorphic over $\la/W(\la)$ by \cite[Theorem 11]{KR71} and commutativity of the centraliser.
	As for the sheaf itself, it can be defined by descent of $C_\theta$ along the flat morphism
	$\mr\to\la/W(\la)$.
	
	For $C_\theta$ to descend,  both pullbacks to $\mr\times_{\la/W(\la)}\mr$  must 
	be isomorphic. Consider both
	projections 
	$$
	p_1, p_2:\mr\times_{\la/W(\la)}\mr\to\mr,
	$$
	and let $C_i=p_i^*C_\theta$. Consider $f: \Gtheta\times\mr\to\mr\times_{\la/W(\la)}\mr$ given by 
	$(h,x)\mapsto (x,h\cdot x)$. We will proceed by proving that there exists an isomorphism
	$f^*C_1\cong f^*C_2$ over $\Gtheta\times \mr$, and then check it descends to an isomorphism over 
	$\mr\times_{\la/W(\la)}\mr$ (since the above map is smooth and therefore flat).
	
	Consider
	$$
	\xymatrix{
		F:f^*C_1\ar[r]&f^*C_2\\
		\left((m,h),g\right)\ar@{|->}[r]&\left((m,h),\Ad_hg\right).
	}
	$$
	It defines an isomorphism over $\Gtheta\times \mr$. To see whether $F$ descends to $\mr\times_{\la/W(\la)}\mr$, 
	we need to check that $F(m,h,g)$ depends only on $(m,h\cdot m, g)$ and not on the particular element
	$h\in\Gtheta$. To see that, it must happen that the pulbacks of $F$ to 
	$$
	S:=\left(\Gtheta\times \mr\right)\times_{\mr\times_{{\la/W(\la)}}\mr}\left({\Gtheta}\times \mr\right)
	$$
	by each of the projections $\pi_1,\ \pi_2:S\to \Gtheta\times\mr$  
	fit into the commutative square 
	$$
	\xymatrix{
		\pi_1^*f^*C_1\ar[r]\ar[d]&\pi_1^*f^*C_2\ar[d]\\
		\pi_2^*f^*C_1\ar[r]&\pi_2^*f^*C_2.
	}
	$$
	To do this, note that $S\cong {\Gtheta}\times C_1$ by the map
	$$
	\left((m,h),(m,h')\right)\mapsto (h,(m,hm), h^{-1}h').
	$$
	Then, in these terms,
	$$
	\pi_1: (h,(m,hm), z)\mapsto (m,h)\qquad
	\pi_2:  (h,(m,hm), z)\mapsto (m,hz)
	$$
	$$
	F_{21}:[(h,(m,hm), z),g]\mapsto [(h,(m,hm), z),\Ad_hg],
	$$
	so that the above square reads
	$$
	\xymatrix{
		[(h,(m,hm), z),g]\ar@{|->}[r]\ar@{|->}[dd]&[(h,(m,hm), z),\Ad_hg]\ar@{|->}[d]\\
		&[(h,(m,hm), z),\Ad_hg]\ar@{=}[d]^{(*)}\\
		[(hz,(m,hm), z),g]\ar@{|->}[r]&[(h,(m,hm), z),\Ad_{hz}g],
	}
	$$
	where  $(*)$ follows from commutativity of $(C_{\theta})_m$. 
	
	\end{proof}
	\begin{cor}\label{cor:local_gerbe}
	The Hitchin map $\stack{\mr/\Gtheta}\to\la/W(\la)$ is a neutral gerbe banded by $J_\theta$.
	\end{cor}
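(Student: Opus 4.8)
The plan is to verify the three assertions in the statement separately: that $\stack{\mr/\Gtheta}\to\la/W(\la)$ is a gerbe, that its band is the abelian group scheme $J_\theta$, and that it is \emph{neutral}. The band computation is essentially already in hand. Exactly as in Lemma \ref{inertia}, the inertia stack of $\stack{\mr/\Gtheta}$ is represented by the centraliser group scheme $C_\theta\to\mr$ of \eqref{eq Ctheta}, and by Proposition \ref{prop Ctheta descends} this descends to the affine abelian group scheme $J_\theta$ over $\la/W(\la)$. Hence, once the gerbe property is established, the band in the sense of Remark \ref{rk:band} is automatically $J_\theta$, and abelianness of $J_\theta$ realises it as an abelian banded gerbe in the sense of Definition \ref{def:gerbe}.

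For the gerbe property I would invoke the criterion of \cite[Appendix A]{AOV} (equivalently, adapt \cite[Lemme 2.1.1]{NgoLemme}, which was already the template for Proposition \ref{prop Ctheta descends}): a morphism of algebraic stacks is a gerbe as soon as it is flat, locally of finite presentation, and has geometric fibres isomorphic to classifying stacks of flat group schemes. Flatness and local finite presentation follow from smoothness of $\Gtheta$ together with flatness of the Chevalley map $\chi\colon\mr\to\la/W(\la)$ on the regular locus; the flatness of the inertia needed for the criterion is precisely the smoothness of $C_\theta\to\mr$ established above. The essential point is the identification of the geometric fibres. Over a geometric point $c\in\la/W(\la)$ the fibre of the stack is $\stack{(\chi^{-1}(c)\cap\mr)/\Gtheta}$, and by Proposition \ref{prop orbits in laW} the set $\chi^{-1}(c)\cap\mr$ is a single $\Gtheta$-orbit $\Gtheta\cdot x$; therefore the fibre is $\stack{(\Gtheta/(C_\theta)_x)/\Gtheta}\cong B(C_\theta)_x\cong B(J_\theta)_c$, a classifying stack as required. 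This transitivity is exactly where the enlargement from $H$ to $\Gtheta$ is used: under $H$ alone the regular fibres may split into several orbits (Example \ref{ex orbits sl2}), which is what obstructed the naive gerbe structure for \eqref{eq lshit}.

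For neutrality it suffices to exhibit a global section of $\stack{\mr/\Gtheta}$ over $\la/W(\la)$. I would use a Kostant--Rallis section $\epsilon\colon\la/W(\la)\to\mr$ of the Chevalley map, whose existence in the quasi-split case follows from \cite{KR71} (cf. the section statement of \cite{HKR}); concretely one takes the trivial $\Gtheta$-bundle $\Gtheta\times(\la/W(\la))$ together with the $\Gtheta$-equivariant map $(g,c)\mapsto \Ad(g)\,\epsilon(c)$ into $\mr$, which lifts the identity on the base. Such a section trivialises the gerbe, giving $\stack{\mr/\Gtheta}\cong BJ_\theta$ over $\la/W(\la)$ and hence neutrality.

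I expect the main obstacle to be the fibrewise transitivity of the $\Gtheta$-action, i.e. the input from Proposition \ref{prop orbits in laW}: showing that every regular fibre of $\chi$ becomes a single orbit after passing from $H$ to $\Gtheta$ (equivalently to $N_G(G_\R)$ by Lemma \ref{lm:mGtheta_are_NGR}) is the geometric heart of the statement and rests on the Kostant--Rallis orbit theory of \cite{KR71}. A secondary subtlety is ensuring that the section $\epsilon$ can be taken with image in the regular locus $\mr$ over the non-regular part of $\la/W(\la)$ as well, so that it defines a section of $\stack{\mr/\Gtheta}$ rather than merely of $\stack{\m/\Gtheta}$.
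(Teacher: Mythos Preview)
Your proposal is correct and follows essentially the same route as the paper: identify the band via descent of $C_\theta$ (Proposition~\ref{prop Ctheta descends}), verify the gerbe property, and use the Kostant--Rallis section for neutrality. The paper is terser---it packages local connectedness as an immediate consequence of the descent of inertia to $J_\theta$ already established in Proposition~\ref{prop Ctheta descends} (whose proof already encodes the fibrewise transitivity you spell out), and then invokes the Kostant--Rallis section (\cite[Theorem~11]{KR71}, \cite[Theorem~4.6]{HKR}) to obtain local non-emptiness and neutrality simultaneously---whereas you verify the gerbe axioms more explicitly via the geometric-fibre criterion.

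One small refinement: for the transitivity of $\Gtheta$ on regular Chevalley fibres you cite Proposition~\ref{prop orbits in laW}, but that statement is literally about $(\Ad G)^\theta$-orbits rather than $\Gtheta$-orbits; the cleaner reference is Proposition~\ref{prop KR} (or the combination of Proposition~\ref{prop orbits in laW} with \cite[Proposition~3.21]{HKR} as in Remark~\ref{rk:Gtheta_real_bundles}), which directly gives both the section with regular image and the single-orbit property for $\Gtheta$. This also dispels your secondary concern: the Kostant--Rallis section is constructed with image in $\mr$ over all of $\la/W(\la)$, not merely over the regular semisimple locus.
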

	\begin{proof}
		Since the inertia stack descends to $J_\theta\to\la/W(\la)$, it follows that the stack is locally connected. We need to prove that it admits a section, which yields local non emptiness and neutrality at once.  Let  $s_{KR}$ be the Kostant--Rallis section, that is, the section of the Chevalley morphism \eqref{eq Chevalley r} constructed in  \cite[Theorem 11]{KR71} for groups of the adjoint type and adapted in \cite[Theorem 4.6]{HKR} to strongly reductive real groups.  This induces a universal object of our gerbe by assigning to each scheme $f: S\to \laC/W(\la)$ the trivial $G_\theta$ bundle together with the section $\phi: S\times G_\theta\to\mr$ sending $(x,g)\mapsto g\cdot s_{KR}(f(x))$
	\end{proof}
	\begin{rk}\label{rk:HKR_factors_through_real}
		The Kostant--Rallis section factors through the atlas $\mr$ and thus factors through the image of $\stack{\mr/H}\longrightarrow\stack{\mr/\Gtheta}$.
	\end{rk}
	\subsection{Involutions on the local stack}\label{sec:local_invols}
	A way to retrieve local $G_\R$-Higgs bundles from $\stack{\mr/\Gtheta}$ is by studying fixed points by involutions. Indeed, we note that the substack of $G_\R$-Higgs bundles is contained in 
	$\stack{\mr/\Gtheta}^\Theta$, where $\Theta$ is the involution sending
	$$
	\Theta:(E,\phi)\mapsto (E\times_\theta \Gtheta,\phi).
	$$
	
\begin{prop}\label{prop local gerbe}
	The involution on $C_\theta$ given by $\Theta(x,g)=(x, g^\theta)$ descends to an involution $\Theta$ on $J_\theta$ making the isomorphism 
	$$
	\stack{\mr/\Gtheta}\to BJ_\theta
	$$ 
	$\Theta$-equivariant. 	In particular, the image of $\stack{\mr/H}\to\stack{\mr/\Gtheta}$ is contained inside  
	$$
	(BJ_\theta)^\Theta=\{P\in BJ_\theta\,:\, P\times_\Theta J_\theta\cong P\}.
	$$
\end{prop}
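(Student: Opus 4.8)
The plan is to establish three things in turn: first, that the fibrewise map $(x,g)\mapsto(x,g^\theta)$ is a well-defined involution of the group scheme $C_\theta\to\mr$ compatible with the descent of Proposition \ref{prop Ctheta descends}, hence descending to an involution of $J_\theta$; second, that under the gerbe isomorphism of Corollary \ref{cor:local_gerbe} this involution of the band corresponds to the stacky involution $\Theta$; and third, as an immediate consequence, that objects coming from $\stack{\mr/H}$ are $\Theta$-fixed.

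For the first step I would begin by checking that $\theta$ restricts to an automorphism of $\Gtheta$: if $g^{-1}g^\theta=z\in Z(G)$, then applying $\theta$ gives $(g^\theta)^{-1}g=z^\theta\in Z(G)$ (as $Z(G)$ is $\theta$-stable), so $g^\theta\in\Gtheta$. Moreover $g^\theta=gz$ with $z$ central, whence $\Ad(g^\theta)=\Ad(g)$; in particular $g\cdot x=x$ forces $g^\theta\cdot x=x$, so $\Theta$ preserves $C_\theta$, and it is visibly an involution there since $\theta^2=\mathrm{id}$. To descend it along $\mr\to\la/W(\la)$ I would reuse the gluing isomorphism $F\colon f^*C_1\to f^*C_2$, $((m,h),g)\mapsto((m,h),\Ad_h g)$ from the proof of Proposition \ref{prop Ctheta descends}: the identity $(\Ad_h g)^\theta=\Ad_{h^\theta}(g^\theta)=\Ad_h(g^\theta)$ (again because $h^\theta$ and $h$ differ by a central element) shows that $\Theta$ commutes with $F$, so it is compatible with the descent datum and descends to an involution $\Theta$ of $J_\theta$.

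The second step is the main point. Here I would recall that the band of the gerbe $\stack{\mr/\Gtheta}\to\la/W(\la)$ is the descended inertia, i.e.\ $J_\theta$, and that the isomorphism with $BJ_\theta$ furnished by Corollary \ref{cor:local_gerbe} identifies the automorphism group of an object $(E,\phi)$ with sections of $C_\theta$ pulled back along $\phi$. The involution $\Theta$ sends $(E,\phi)$ to the $\theta$-twist $(E\times_\theta\Gtheta,\phi)$; this is well defined because $(\theta|_{\Gtheta},\theta|_\m)$ is an equivariant pair, as $\theta|_\m=-\mathrm{id}$ and $\Ad(g^\theta)=\Ad(g)$ give $\theta(\Ad(g)X)=\Ad(g^\theta)(\theta X)$, while $\mr$ being conical ensures $\theta|_\m$ preserves it, so that $\phi$ transports. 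Functoriality of the twisting construction sends an automorphism $s$ of $(E,\phi)$, viewed locally as a $\Gtheta$-valued function, to $\theta(s)$; hence on the band $\Theta$ acts exactly by $g\mapsto g^\theta$, which is the involution descended in the first step. This is precisely the assertion that $\stack{\mr/\Gtheta}\to BJ_\theta$ is $\Theta$-equivariant. I expect the bookkeeping of this identification---matching the abstract stacky involution with the concrete band involution through the section of Corollary \ref{cor:local_gerbe}---to be the most delicate part, since one must check naturality across local trivialisations rather than for a single object.

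Finally, for the ``in particular'' clause I would argue that the image of an object of $\stack{\mr/H}$ is $\Theta$-fixed. Such an image is the extension $E_H\times_H\Gtheta$ of an $H$-bundle $E_H$ (note $H=G^\theta\subset\Gtheta$); its transition functions take values in $H=G^\theta$, on which $\theta$ acts as the identity, so applying $\theta$ to them changes nothing and $(E_H\times_H\Gtheta)\times_\theta\Gtheta\cong E_H\times_H\Gtheta$ canonically, compatibly with $\phi$. Thus the image lies in the $\Theta$-fixed locus of $\stack{\mr/\Gtheta}$, and by the $\Theta$-equivariance established above it lands in $(BJ_\theta)^\Theta=\{P:P\times_\Theta J_\theta\cong P\}$.
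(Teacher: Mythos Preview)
Your proposal is correct and follows essentially the same approach as the paper's proof, which is considerably terser: the paper simply invokes ``$\Gtheta$-equivariance'' for the descent of $\Theta$ and ``definition of the inertia stack'' for the second claim, whereas you spell out the compatibility with the gluing isomorphism $F$ from Proposition~\ref{prop Ctheta descends} and the transition-function argument for the image of $\stack{\mr/H}$. Your explicit use of the identity $\Ad(g^\theta)=\Ad(g)$ for $g\in\Gtheta$ (since $g^\theta=gz$ with $z$ central) is exactly the mechanism underlying the paper's one-line justification, so there is no substantive difference in strategy.
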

\begin{proof}
Descent of the involution follows from $\Gtheta$-equivariance; the induced involution on
 $J_\theta$ (that we will also denote by $\Theta$) is given  by $s^{\Theta}(x)=\theta(s(x))$
for all $s\in J_\theta(S)$. 

The second statement follows by definition of the inertia stack, as sending  $(E,\phi)$ to $(E\times_\theta H,\phi)$ translates into changing the action of an automorphism by $\theta$. Since the image of $\stack{\mr/H}$ is fixed by the involution, it follows that the image will be contained in $(BJ_\theta)^\Theta$.
\end{proof}

	\begin{rk}\label{rk:essential_image}
		Note that the essential image of $\stack{\mr/H}$, i.e., the minimal stack containing all objects of $\stack{\mr/H}$ and those isomorphic to them, is the whole $\stack{\mr/G_\theta}$. This is immediate from commutativity of the local Hitchin map with extension of the structure group,  and the facts that $\stack{\mr/G_\theta}$ is a gerbe over the Hitchin base and that the latter classifies isomorphism classes.
	\end{rk}


Note that the scheme $ J_\theta$ is very twisted and hard to work with in concrete examples. We next give an alternative description of it, also keeping an eye on $BJ^\Theta_\theta$ . Since the right objects are much more clearly understood when looking at the semisimple locus, we will restrict to semisimple Higgs bundles first, then go back to the general case.

\subsection{An alternative description of the band: the semisimple locus}\label{sec:ss_local_band}
In this section we focus on bundles $(E,\phi)$ such that $\phi$ takes only semisimple values. Namely, sections of the stack $\stack{\mrs/\Gtheta}$, where $\mrs\subset\mr$ denotes semisimple elements of $\mr$. This will allow us to find alternative descriptions of $J^\Theta_\theta,\ J_\theta$ over a suitable sublocus of $\la/W(\la)$. 

Let $\lar=\la\cap\mrs$. We have a commutative diagram
\begin{equation}\label{eq triangle ss locus}
	\xymatrix{
		\lar\ar[r]^i\ar[dr]_{\pi_\la}&\mrs\ar[d]^{\pi_\m}\\
		&\lar/W(\la)}.
\end{equation}
Note that
\begin{equation}\label{eq Ttheta}
	i^*C_\theta=\lar\times T_\theta
\end{equation}
where $T_\theta=T\cap \Gtheta$.
\begin{prop}\label{prop local ss gerbe}
	Let $N_\theta(\la)$ be the normaliser of $\la$ inside $\Gtheta$.
	The embedding $\lar\plonge\mrs$ induces an isomorphism 
	$$\stack{\lar/N_\theta(\la)}\cong\stack{\mrs/\Gtheta}$$ of stacks over $\lar/W(\la)$.
	\end{prop}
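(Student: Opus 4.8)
The plan is to realise the comparison morphism as an equivalence by exhibiting the source as an explicit quotient of the target. The inclusions $\lar\plonge\mrs$ and $N_\theta(\la)\hookrightarrow\Gtheta$ induce the morphism $\stack{\lar/N_\theta(\la)}\to\stack{\mrs/\Gtheta}$, and by the standard criterion for induced maps of quotient stacks this is an isomorphism if and only if the $\Gtheta$-equivariant action map
\begin{equation*}
a:\Gtheta\times^{N_\theta(\la)}\lar\longrightarrow\mrs,\qquad [(g,x)]\mapsto \Ad(g)(x),
\end{equation*}
is an isomorphism of schemes. Both stacks carry structure morphisms to $\lar/W(\la)$ given by the Chevalley map $\chi$, which is $\Gtheta$-invariant on the semisimple locus (there $\la/W(\la)$ parametrises $\Gtheta$-orbits, cf. Proposition \ref{prop orbits in laW}); since $a$ is manifestly compatible with $\chi$, proving that $a$ is an isomorphism automatically yields an isomorphism over $\lar/W(\la)$. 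So the whole proof reduces to analysing $a$.

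First I would check that $a$ is bijective on points. Surjectivity is Kostant--Rallis \cite[Theorem 1]{KR71}: every regular semisimple element of $\m$ is $H$-conjugate, hence $\Gtheta$-conjugate, into $\lar$, so $\Gtheta\cdot\lar=\mrs$. For injectivity the key point is that a regular semisimple $x\in\lar$ recovers $\la$ canonically as $\lc_\m(x)=\la$ (Remark \ref{rk mr c gr}). Since $\Gtheta$ normalises $\m$ (as in the proof of Proposition \ref{prop Ctheta descends}), for $g\in\Gtheta$ one has $\Ad(g)\lc_\m(x)=\lc_\m(\Ad(g)x)$; thus if $x,x'\in\lar$ satisfy $\Ad(g)x=x'$, then $\Ad(g)\la=\lc_\m(x')=\la$, so $g\in N_\theta(\la)$. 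Taking $x=x'$ shows $\mathrm{Stab}_{\Gtheta}(x)=\mathrm{Stab}_{N_\theta(\la)}(x)$, and the general case shows that each fibre of $a$ is a single $N_\theta(\la)$-orbit; hence $a$ is bijective.

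Since over $\C$ a bijective morphism of smooth varieties need not be an isomorphism, the main work is to verify that $a$ is \'etale by showing that its differential at the distinguished point $[(1,x)]$, $x\in\lar$, is bijective. The tangent space of the source is $(\g_\theta/\n_\theta(\la))\oplus\la$, and $d a$ sends $(\overline{\xi},v)\mapsto[\xi,x]+v$. Using $\g_\theta=\h\oplus\z_\m(\g)$ (established in the smoothness lemma after Definition \ref{defi Gtheta}) together with the direct sum $\m=\ad(x)(\h)\oplus\la$ from the proof of Lemma \ref{C is smooth}, one gets $[\g_\theta,x]=\ad(x)(\h)$, whence $\mathrm{Im}\,d a=\ad(x)(\h)+\la=\m$ and $da$ is surjective. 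For injectivity, $[\xi,x]+v=0$ with $v\in\la$ forces $v\in\la\cap\ad(x)(\h)=0$ and $\xi\in\lc_{\g_\theta}(x)$; since $x$ is regular in $\g$ (Remark \ref{rk mr c gr}) and lies in the Cartan $\lt=\lc_\g(\la)$ by (QS1), we have $\lc_\g(x)=\lt$, which normalises $\la$, so $\xi\in\g_\theta\cap\lt\subseteq\n_\theta(\la)$ and $\overline{\xi}=0$. Thus $da$ is an isomorphism, $a$ is a bijective \'etale morphism of smooth complex varieties, hence an isomorphism, and the proposition follows. The delicate point throughout is the interplay with the non-connected group $\Gtheta$: bijectivity and the differential computation must hold uniformly across all components of $\Gtheta$, which is exactly what the canonical identification $\la=\lc_\m(x)$ guarantees.
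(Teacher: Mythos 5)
Your proof is correct, but it takes a genuinely different route from the paper. The paper argues entirely at the level of gerbes: it checks that $\stack{\lar/N_\theta(\la)}\to\lar/W(\la)$ is itself a gerbe (local non-emptiness via triviality of $P\times_{\lar/W(\la)}\lar$, local connectedness via the same conjugacy statement you use), identifies its band with that of $\stack{\mrs/\Gtheta}$ by descending $\lar\times T_\theta$ and $C_\theta|_{\mrs}$ to the same scheme $J_\theta|_{\lar/W(\la)}$, and then invokes the principle that a globally defined morphism of gerbes inducing an isomorphism of abelian bands is an equivalence. You instead reduce to the scheme-level statement that the action map $a:\Gtheta\times^{N_\theta(\la)}\lar\to\mrs$ is an isomorphism, and prove it by combining Kostant--Rallis conjugacy and the canonical recovery $\la=\lc_\m(x)$ (for bijectivity) with a tangent-space computation based on $\m=[x,\h]\oplus\la$ and $\g_\theta=\h\oplus\z_\m(\g)$ (for \'etaleness); your identification $\lc_{\g_\theta}(x)=\n_{\g_\theta}(\la)$ is exactly what makes the kernel of $da$ vanish. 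Your argument is more elementary and self-contained -- it bypasses the band comparison and the slightly terse ``local isomorphism of gerbes with abelian band is global'' step -- and it yields the stronger, concrete statement that $\mrs$ is literally the associated bundle $\Gtheta\times^{N_\theta(\la)}\lar$. What it does not deliver directly is the band identification $i^*C_\theta=\lar\times T_\theta$ descending to $J_\theta|_{\lar/W(\la)}$, which is the part of the paper's proof that is actually reused in Corollary \ref{cor Jthetas tori}; with your approach that identification still follows, but would need to be extracted separately. Two small points you should make explicit: $\mrs$ is open in $\m$ (it is $\chi^{-1}$ of the complement of the discriminant in $\la/W(\la)$), so both sides of $a$ are smooth varieties; and the final step uses that a bijective \'etale morphism of varieties over $\C$ is an isomorphism. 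Both are standard.
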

\begin{proof}
	Let $\stack{\chi}_{\la}:\stack{\lar/N_\theta(\la)}\to\lar/W(\la)$ be the restriction of $\stack{\chi}$ definde in \eqref{eq lshittheta}.  We claim that
	$\stack{\chi}_{\la}$ is a subgerbe of $\stack{\chi}$ with the same band. 
	
	To see it is a gerbe, we check local connectedness and non emptiness.
	
	Local non emptiness of $\stack{\chi}_{\la}$ follows from the fact that given a principal $N_\theta(\la)$-bundle $P\to S$, locally $P\times_{\lar/W(\la)}\lar\cong S\times \lar/W(\la)$.  Indeed, this is a consequence of $W(\la)=N_H(\la)/C_H(\la)$ and Corollary \ref{lm exact seq Ctheta} \eqref{mcFrs}.
	
	Local connectedness follows from local connectedness of $\stack{\mrs/\Gtheta}$ and the fact that two elements $x,y\in \lar$ conjugate by $\Gtheta$ must be conjugate by elements in $N_\theta(\la)$, as by regularity $\lc_\la(x)=\la$ (cf. Definition \ref{rk mr c gr}).
	
	Commutativity of diagram (\ref{eq triangle ss locus}) also implies that both gerbes are locally isomorphic over $\lar/W(\la)$. Indeed, their bands descend from $C_\theta|_{\mrs}$ and $\lar\times T_\theta$ (by Proposition \ref{prop Ctheta descends} and similar arguments for the substack $\stack{\lar/N(\la)}$), which in turn descend to the inertia stacks; since by commutativity of diagram (\ref{eq triangle ss locus}) these descended schemes are isomorphic to $J_\theta|_{\lar/W(\la)}$, it follows that both stacks are  locally isomorphic over $\lar/W(\la)$. 

 But since the band is abelian and any such gerbe is a torsor over a lift of the band, since the local isomorphism of gerbes is globally defined it must be a global isomorphism.
\end{proof}
Through Proposition \ref{prop local ss gerbe} we get a clearer picture of the sheaf $J_\theta$:
\begin{cor}\label{cor Jthetas tori}
	Let $J_\theta$ be the group scheme defined in Proposition \ref{prop Ctheta descends}. 
	Then, its restriction to $\lar/W(\la)$ has $S$ points (for $b:S\to\lar/W(\la)$)		
	$$J_\theta(S)=\Hom_{W(\la)}(\ol{S}_b,T_\theta),$$
	where 
	\begin{equation}\label{eq:cam_cov_real}
\ol{S}_b=S\times_{\la/W(\la)}\la
	\end{equation}
	
	Moreover, $J_\theta^\Theta|_{\lar/W(\la)}$ descends from  $C^\theta|_{\mrs}$ for $C^\theta$ defined in \eqref{defi Ctheta} or, equivalently, from $J^\theta|_{\mathbf{A}\times_{\la/W(\la)}\lar/W(\la)}$ (cf. Lemma \ref{lm gerbe 1}).
\end{cor}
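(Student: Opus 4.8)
The plan is to reduce everything to the regular semisimple locus, where Proposition \ref{prop local ss gerbe} replaces $\stack{\mrs/\Gtheta}$ by $\stack{\lar/N_\theta(\la)}$ and \eqref{eq Ttheta} trivialises the inertia. The first step is to record the short exact sequence
\[
1\longrightarrow T_\theta\longrightarrow N_\theta(\la)\longrightarrow W(\la)\longrightarrow 1,
\]
where $T_\theta=T\cap\Gtheta=C_{\Gtheta}(\la)$. This follows from $\Gtheta=FH$ (Lemma \ref{lm exact seq Gtheta}) together with $F\subset\exp(\la)\subset T_\theta$, the identity $T\cap H=C_H(\la)$, and the presentation $W(\la)=N_H(\la)/C_H(\la)$: since $F$ centralises $\la$, the composite $N_\theta(\la)\to\Aut(\la)$ has the same image $W(\la)$ as $N_H(\la)\to\Aut(\la)$, and kernel $C_{\Gtheta}(\la)=T_\theta$. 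In particular $W(\la)$ acts on the abelian group $T_\theta$ by conjugation, and I may always choose the representative $n_w$ of $w\in W(\la)$ inside $N_H(\la)\subset H=G^\theta$.

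Next I would note that, $\lar$ consisting of regular points, $W(\la)$ acts freely on $\lar$, so $\pi_\la\colon\lar\to\lar/W(\la)$ is an \'etale $W(\la)$-Galois cover. By Proposition \ref{prop local ss gerbe} and \eqref{eq Ttheta}, the band $J_\theta|_{\lar/W(\la)}$ is obtained by descending the constant group scheme $i^*C_\theta=\lar\times T_\theta$ along $\pi_\la$, the descent datum being the diagonal $W(\la)$-action (the Weyl action on $\lar$, conjugation on $T_\theta$). Faithfully flat (Galois) descent then identifies the sections over $b\colon S\to\lar/W(\la)$ with the $W(\la)$-invariant sections of the pullback $\ol{S}_b\times T_\theta$, namely with $W(\la)$-equivariant maps $\ol{S}_b\to T_\theta$; this gives $J_\theta(S)=\Hom_{W(\la)}(\ol{S}_b,T_\theta)$, once one observes that over $\lar/W(\la)$ the fibre products $S\times_{\la/W(\la)}\la$ and $S\times_{\lar/W(\la)}\lar$ agree.

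For the fixed-point statement I would use that $\Theta$ is induced fibrewise by $g\mapsto g^\theta$ (Proposition \ref{prop local gerbe}) and that conjugation by $n_w\in N_H(\la)\subset G^\theta$ commutes with $\theta$; hence $\Theta$ is compatible with the descent datum and taking $\Theta$-fixed points commutes with descent. Therefore $J_\theta^\Theta|_{\lar/W(\la)}$ is the descent of $(\lar\times T_\theta)^\Theta=\lar\times (T_\theta)^\theta$. Since $(T_\theta)^\theta=T_\theta\cap H=T\cap H=C_H(\la)$ and $i^*C^\theta=\lar\times C_H(\la)$, this is exactly the descent of $C^\theta|_{\lar}$. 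Finally, on the regular semisimple locus the orbit space coincides with the GIT quotient (the splitting of orbits in Example \ref{ex orbits sl2} and the remark after Corollary \ref{lm exact seq Ctheta} happens only for non-semisimple elements), so $\mathbf{A}\times_{\la/W(\la)}\lar/W(\la)\cong\lar/W(\la)\cong\mrs/H$; combined with Kostant--Rallis \cite[Theorem 1]{KR71}, which guarantees that every $H$-orbit in $\mrs$ meets $\lar$ in a single $W(\la)$-orbit, this lets me identify the descent of $C^\theta|_{\lar}$ along $\pi_\la$ with the descent of $C^\theta|_{\mrs}$, and hence with $J^\theta|_{\mathbf{A}\times_{\la/W(\la)}\lar/W(\la)}$ of Lemma \ref{lm gerbe 1}, exactly as in the proof of Proposition \ref{prop local ss gerbe}.

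The step I expect to be the main obstacle is keeping the descent data honest throughout: I must verify that the $W(\la)$-action on $T_\theta$ through $N_\theta(\la)$ is well defined, that it is simultaneously compatible with the Weyl action on $\lar$ and with the involution $\theta$, and that the $W(\la)$-cover $\ol{S}_b$ appearing here is literally the one used in Proposition \ref{prop local ss gerbe}. Granting these compatibilities, the remaining content is the formal application of Galois descent together with the commutativity of $T_\theta$.
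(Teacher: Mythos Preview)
Your argument is correct and follows essentially the same route as the paper. Both proofs invoke Proposition~\ref{prop local ss gerbe} to replace $J_\theta|_{\lar/W(\la)}$ by the descent of $\lar\times T_\theta$ along $\pi_\la$, establish the exact sequence $1\to T_\theta\to N_\theta(\la)\to W(\la)\to 1$ (the paper cites \cite[Proposition~7.49]{K} and quasi-splitness, you derive it from Lemma~\ref{lm exact seq Gtheta}), and then use abelianness of $T_\theta$ to reduce $N_\theta(\la)$-equivariance to $W(\la)$-equivariance; for the second part the paper appeals to Corollary~\ref{lm exact seq Ctheta}\eqref{mcFrs} to see that $\mc{F}^2$ is trivial over $\mrs$, while you compute $(T_\theta)^\theta=T\cap H=C_H(\la)$ directly and invoke Kostant--Rallis to identify the orbit space with $\lar/W(\la)$, which amounts to the same thing.
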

\begin{proof} 
	From Proposition \ref{prop local ss gerbe} we have that
	$$
	J_\theta(U)=\Hom_{N_\theta(\la)}(\ol{U}_b, T_\theta).
	$$
	Since $W(\la)=N_\theta(\la)/T_\theta$ (by \cite[Proposition 7.49]{K}, quasi-splitness of $G_\R$ and the fact that $T_\theta$ is the trivial extension of $T\cap H$ by a central subgroup of $G$), it follows that the action of $N_\theta(\la)$ on $\ol{U}_b$  and $\Ttheta$ factors through the quotient. 
		
	Descent of $C^\theta|_{\mrs}$ follows from Lemma \ref{lm exact seq Ctheta}, which implies that $F^2$ acts trivially on semisimple orbits. Equivalence with descent of $J^\theta|_{\mathbf{A}\times_{\la/W(\la)}\lar/W(\la)}$ is a consequence of Lemma  \ref{lm gerbe 1}. 
\end{proof}
\begin{cor}\label{cor:local_hitchin_fibres}
The fibre of the gerbe $\stack{\mrs/\Gtheta}$  over $b:S\to \lar/W(\la)$ is the category of $T_\theta$ principal bundles $P$ over $\ol{S}_b$ as in \eqref{eq:cam_cov_real} satisfying $w^*P\times_wT_\theta\cong P$ for all $w\in W(\la)$.  Likewise $\stack{\mrs/H}$ is a gerbe over $\lar/W(\la)$,  whose fibre over $b:X\to \lar/W(\la)$ is the category of principal $D$-bundles $P$ over $\ol{X}_b$ satisfying $w^*P\times_wD\cong P$ for all $w\in W(\la)$.
\end{cor}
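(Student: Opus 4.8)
The plan is to read the fibres off from the gerbe structure and band computation already established, reducing the whole statement to a single descent step along the cameral cover. I would start from the general principle that the fibre of a neutral $A$-banded gerbe $\mathcal{G}\to Y$ over an $S$-point $b\colon S\to Y$ is the category of $b^*A$-torsors on $S$: a global section of $\mathcal{G}$ restricts to one over $S$, so the fibre is a neutral $b^*A$-gerbe on $S$, and a neutral abelian gerbe is equivalent to the classifying stack of its band. By Proposition~\ref{prop local ss gerbe} we may replace $\stack{\mrs/\Gtheta}$ by $\stack{\lar/N_\theta(\la)}$ over $\lar/W(\la)$, which the restriction of Corollary~\ref{cor:local_gerbe} exhibits as a \emph{neutral} gerbe banded by $J_\theta|_{\lar/W(\la)}$ (the Kostant--Rallis section lands in $\mrs$ over regular semisimple values). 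Thus the fibre over $b$ is the category of $(b^*J_\theta)$-torsors on $S$.

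The real content is to rewrite these torsors as equivariant bundles on the cover $\ol{S}_b$ of \eqref{eq:cam_cov_real}. By Corollary~\ref{cor Jthetas tori}, $b^*J_\theta$ is the group scheme with $S$-points $\Hom_{W(\la)}(\ol{S}_b,T_\theta)$, i.e.\ the $W(\la)$-invariants $(\pi_{b,*}T_\theta)^{W(\la)}$ of the pushforward along the \'etale $W(\la)$-cover $\pi_b\colon\ol{S}_b\to S$. I would then apply faithfully flat descent along $\pi_b$, in the spirit of the descent carried out in Proposition~\ref{prop Ctheta descends} (adapting Ng\^o's Lemma~2.1.1): a torsor under $(\pi_{b,*}T_\theta)^{W(\la)}$ is the same datum as a $T_\theta$-bundle $P$ on $\ol{S}_b$ endowed with a $W(\la)$-equivariant structure, and such a structure is precisely a coherent family of isomorphisms $w^*P\times_wT_\theta\cong P$, $w\in W(\la)$. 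As $\pi_b$ is unramified on the semisimple locus there is no ramification divisor, hence no further constraint, so the fibre is exactly the asserted category.

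For $\stack{\mrs/H}$ I would run the identical chain with $H$, $D=T\cap H$ (the torus with Lie algebra $\ld=\lt\cap\h$) and $N_H(\la)$ in place of $\Gtheta$, $T_\theta$, $N_\theta(\la)$. Three points are worth isolating. First, $\stack{\mrs/H}$ is genuinely a gerbe over $\lar/W(\la)$, not merely over the non-separated space $\mathbf{A}$ of Lemma~\ref{lm gerbe 1}: by Corollary~\ref{lm exact seq Ctheta}\eqref{mcFrs} the group $F^2$ acts trivially on semisimple orbits, so over the semisimple locus $\mathbf{A}$ and $\lar/W(\la)$ coincide. Second, it is neutral there, the section being the Kostant--Rallis section, which factors through $\stack{\mr/H}$ by Remark~\ref{rk:HKR_factors_through_real}. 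Third, its band has $S$-points $\Hom_{W(\la)}(\ol{S}_b,D)$: this follows from $i^*C^\theta=\lar\times D$ (the analogue of \eqref{eq Ttheta}) together with $W(\la)=N_H(\la)/D$, just as in Corollary~\ref{cor Jthetas tori}. The same descent then produces the $D$-bundles $P$ on $\ol{X}_b$ with $w^*P\times_wD\cong P$.

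The main obstacle is the descent identification in the second paragraph: one must check, functorially and with the correct cocycle bookkeeping, that torsors under the $W(\la)$-invariant pushforward group scheme correspond to $W(\la)$-equivariant $T_\theta$-bundles on $\ol{S}_b$. Once this and the neutral gerbe structure are in place, everything else is formal.
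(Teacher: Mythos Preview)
Your proposal is correct and is precisely the natural argument the paper leaves implicit: the corollary is stated in the paper without proof, as an immediate consequence of Proposition~\ref{prop local ss gerbe} and Corollary~\ref{cor Jthetas tori}, and what you have written is the expected unpacking of that implication (neutral gerbe $\Rightarrow$ classifying stack of the band, then descent along the \'etale $W(\la)$-cover $\ol{S}_b\to S$ to rewrite $(\pi_{b,*}T_\theta)^{W(\la)}$-torsors as $W(\la)$-equivariant $T_\theta$-bundles). Your handling of the $H$-case, in particular the observation via Corollary~\ref{lm exact seq Ctheta}\eqref{mcFrs} that over the semisimple locus $\mathbf{A}$ collapses to $\lar/W(\la)$, and the identification $i^*C^\theta=\lar\times D$ with $D=T\cap H=C_H(\la)$, is exactly right. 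One small remark: the ``coherent family'' of isomorphisms you produce carries cocycle compatibilities, which is slightly more than the bare condition $w^*P\times_w T_\theta\cong P$ literally stated in the corollary; the paper is being informal here and your stronger reading is the intended one.
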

\subsection{An alternative description of the band: back to arbitrary Higgs fields}\label{sec:local_band}
In order to extend the results of Section \ref{sec:ss_local_band}, we will compare it with the complex case studied  in \cite{DG, NgoLemme}. This will allow us to characterise $J_\theta$ as a group of tori over the whole $\la/W(\la)$ (compare with Corollary \ref{cor Jthetas tori}) and obtain a cocyclic description of the Hitchin fibres.

By quasi-splitness, $\mr\subset\gr$; this embedding is $H$-equivariant, and hence it induces a morphism on the
level of stacks
\begin{equation}\label{eq kappa}
\stack{\mr/H}\stackrel{\kappa}{\longrightarrow}\stack{\gr/G}.
\end{equation}
This morphism factors through $\stack{\mr/\Gtheta}$ (and $\stack{\mr/\Gtheta}^\Theta$), which, 
as Example \ref{ex orbits sl2} illustrates (see also Remark \ref{rk:essential_image})  
a minimal subgerbe containing the image of $\stack{\mr/H}$. Due to this, the set of isomorphism classes of the image
of $\stack{\mr/H}$ inside $\stack{\mr/\Gtheta}$ embeds into the set of isomorphism classes of objects in $\stack{\gr/G}$, unlike what happens for $\stack{\mr/H}$.

\begin{lm}\label{lm compare orbits}
	Let $G_{\R}\leq G$ be a quasi-split real form. Then:
	
 1. There is an equality $N_{G}(\mr)=\Gtheta\subset N_{G}(H)$.
 
 2. We have an embedding 
 \begin{equation}\label{eq embedding GITs}
 	\iota: \la/W(\la)\plonge \lt/W.
 \end{equation}
 
 3. Given $x,y\in\mr$ , if for some $g\in G$ $\Ad_g x=y$, then there is $h\in \Gtheta$ such that
 $\Ad_h x=y$. If $x,y\in\mrs$ , then $h$ can be taken inside of $H$.
\end{lm}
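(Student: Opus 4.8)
The plan is to handle the three assertions in the order $1,2,3$, since the second feeds the third.

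\textbf{For part 1}, I would first observe that $\mr$ is a nonempty Zariski-open, hence dense, subset of the irreducible space $\m$, so that $N_G(\mr)=N_G(\m)$, and then identify $N_G(\m)$ with $\Gtheta$. If $\Ad_g$ preserves $\m$, then, since $\h=\m^{\perp}$ for the invariant form $\langle\,\cdot\,,\,\cdot\,\rangle$ and $\Ad_g$ is an isometry of it, $\Ad_g$ also preserves $\h$; hence $\Ad_g$ preserves both eigenspaces of $\theta$ and so commutes with $\theta$ on $\g$. This says $\Ad_{g^\theta}=\Ad_g$, i.e. $g^{-1}g^\theta\in\ker\Ad=Z(G)$ (using connectedness of $G$), which is exactly $g\in\Gtheta$. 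The converse inclusion is the computation that for $g\in\Gtheta$ and $X\in\m$ one has $\theta(\Ad_gX)=\Ad_{g^\theta}(-X)=-\Ad_gX$, because $g^{-1}g^\theta$ is central. Finally $\Gtheta\subset N_G(H)$ follows from $(ghg^{-1})^\theta=g^\theta h(g^\theta)^{-1}=ghg^{-1}$ for $h\in H=G^\theta$, again because $g^{-1}g^\theta\in Z(G)$.

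\textbf{For part 2}, the $\theta$-equivariant inclusion $\la\hookrightarrow\lt$ maps into $\lt/W$, and this composite is invariant under $W(\la)=N_H(\la)/C_H(\la)$ because $N_H(\la)$ normalises $\lt=\ld\oplus\la$ and thus maps to $W$; hence it factors through a morphism $\iota:\la/W(\la)\to\lt/W$. Injectivity is the assertion that two $W$-conjugate elements of $\la$ (all of which are semisimple, being contained in the Cartan subalgebra $\lt$) are already $W(\la)$-conjugate. For a \emph{regular} $x\in\la$ this is immediate: if $y=wx$ with $y\in\la$, applying $\theta$ and using $\theta x=-x$, $\theta y=-y$ gives $y=\theta(w)x$, whence $w^{-1}\theta(w)\in W_x$; regularity forces $W_x=\{1\}$, so $\theta(w)=w$, i.e. $w$ commutes with $\theta$ and hence preserves $\la$, so $w\in N_W(\la)$ and its restriction to $\la$ lies in $W(\la)$. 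The general case I would deduce from the identification of $\la/W(\la)$ with the orbit space realised inside $\lt/W$ in Proposition \ref{prop orbits in laW}.

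\textbf{For part 3}, the $\Gtheta$-statement I would obtain from the pieces already in hand: if $x,y\in\mr$ satisfy $\Ad_gx=y$, then $\chi_\g(x)=\chi_\g(y)$ in $\lt/W$, and part 2 lets me conclude $\chi(x)=\chi(y)$ in $\la/W(\la)$. By Corollary \ref{cor:local_gerbe} the map $\stack{\mr/\Gtheta}\to\la/W(\la)$ is a gerbe, so each of its fibres is a single isomorphism class; equivalently $\chi^{-1}(\chi(x))\cap\mr$ is a single $\Gtheta$-orbit, giving $h\in\Gtheta$ with $\Ad_hx=y$. For $x,y\in\mrs$ I would first move both into $\lar$ by elements of $H$ via \cite[Theorem 1]{KR71}, reducing to regular semisimple elements of $\la$; the regular computation of part 2 then produces $w\in N_W(\la)$ realising the conjugacy, and since $W(\la)=N_H(\la)/C_H(\la)$ is the image of the normaliser, $w|_\la$ is realised by some $h\in N_H(\la)\subset H$, so the whole chain of conjugations lies in $H$.

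\textbf{The main obstacle} is the general (non-regular) case of the injectivity in part 2: once $x$ is not regular the stabiliser $W_x$ is nontrivial and the relation $w^{-1}\theta(w)\in W_x$ no longer forces $w\in N_W(\la)$, the obstruction being a class in the (possibly nonvanishing) twisted cohomology of $\langle\theta\rangle$ acting on $W_x$. I do not expect a purely Weyl-theoretic manipulation to kill it; instead I would invoke the Kostant--Rallis description of semisimple orbits recorded in Proposition \ref{prop orbits in laW}. I would also verify that this input, and the gerbe structure of Corollary \ref{cor:local_gerbe} used above, were established independently of Lemma \ref{lm compare orbits}, so as to avoid circularity.
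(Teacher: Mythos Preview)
Your approach is largely sound and the logical flow is the same as the paper's (1 then 2 then 3), but Part~2 is where the substantive divergence---and the genuine gap---lies.

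\textbf{Part 1.} Your orthogonality argument ($\h=\m^\perp$, so $\Ad_g$ preserving $\m$ forces it to commute with $\theta$) is correct and is different from the paper's computation, which instead observes directly that $\Ad_g x\in\m$ for $x\in\m$ is equivalent to $g^{-1}g^\theta\in C_G(x)$, and then intersects centralisers over $\mr$ to land in $Z(G)$. One small point: ``$N_G(\mr)=N_G(\m)$ by density'' only gives $N_G(\mr)\subset N_G(\m)$ immediately. For the reverse inclusion use that in the quasi-split case $\mr=\m\cap\gr$ (Remark~\ref{rk mr c gr}), and $\gr$ is $G$-stable.

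\textbf{Part 2.} Your argument for \emph{regular} $x\in\la$ is fine: $w^{-1}\theta(w)\in W_x=\{1\}$ gives $w\in W^\theta\subset N_W(\la)$, and the restriction $N_W(\la)\to W(\la)$ is onto. The trouble is exactly what you flag as the main obstacle, and your proposed fix does not close it. Proposition~\ref{prop orbits in laW} identifies $\la/W(\la)$ with the $(\Ad G)^\theta$-orbit space in $\mr$; it says nothing about how that space sits inside $\lt/W$, so it does not by itself give injectivity of $\iota$ at non-regular points. You could complete your line by invoking \cite[Theorem~1]{KR71}: any two $G$-conjugate elements of $\la\subset\m$ (automatically semisimple) are $H$-conjugate, and then a standard conjugacy-of-maximal-abelians argument inside $\lc_\g(y)$ upgrades this to $N_H(\la)$-conjugacy. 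The paper bypasses the whole issue by realising both quotients as Kostant-type slices: a principal normal triple $\{e,f,x\}$ with $e,f\in\mr\subset\gr$ gives $\la/W(\la)\cong f+\lc_\m(e)$ (Kostant--Rallis) and $\lt/W\cong f+\lc_\g(e)$ (Kostant), so $\iota$ is visibly the inclusion of affine subspaces $f+\lc_\m(e)\hookrightarrow f+\lc_\g(e)$. This is shorter and avoids any case analysis.

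\textbf{Part 3.} Your use of Corollary~\ref{cor:local_gerbe} is legitimate and not circular: that corollary rests on Proposition~\ref{prop Ctheta descends} and the Kostant--Rallis section, neither of which invokes the present lemma. The paper's proof is the same in content but phrased as a direct citation: $\la/W(\la)$ parametrises $G_\theta$-orbits in $\mr$ (Kostant--Rallis) and $\lt/W$ parametrises $G$-orbits in $\gr$ (Kostant), so Part~2 forces $G$-conjugate regular elements of $\m$ to be $G_\theta$-conjugate. Your semisimple refinement via \cite[Theorem~1]{KR71} matches the paper exactly.
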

 \begin{proof}
\textit{1.} Note that $g\in N_{G}(\mr)$ if and only if $g^{-1}g^\theta\in C_{G}(x)$ for all $x\in\mr$. But since $\mr$ contains both semisimple and nilpotent elements, 
the intersection of all such centralisers is the center $Z(G)$. So
$N_{G}(\mr)\subset\Gtheta\subset N_G(H)$ and the statement follows.   
 
\textit{2.} By  \cite[Theorem 11]{KR71}, the choice of a principal normal triple $\{e,f,x\}$, with $e,f\in \mr$ nilpotent, establishes an isomorphism $\la/W(\la)\cong f+\lc_\m(e)$. By quasi-splitness, $e,f\in\gr$, and so
$\lt/W\cong f+\lc_\g(e)$ by \cite[Theorem 7]{Kos}, so we have the desired embedding. 

\textit{3.} Follows from \textit{2.} above and \cite[Theorem 7]{KR71}, which implies that $\la/W(\la)$ parametrizes $\Gtheta$-orbits, while $\lt/W$ parametrizes $G$-orbits, by
 \cite[Theorem 2]{Kos}. The statement about semisimple elements follows from \cite[Theorem 1]{KR71}.

 \end{proof}
With respect to the local Hitchin maps, there is a commutative diagram
\begin{equation}\label{eq lshitt}
  \xymatrix{\stack{\mr/H}\ar[r]^\kappa\ar[d]_{\lshit}&\stack{\gr/G}\ar[d]^{\lshitc}\\
  \la/W(\la)\ar@{^(->}[r]_\iota&\lt/W.}
 \end{equation}
 In the above, ${\lt}=\ld\oplus{\la}$ is a maximal
$\theta$-anisotropic Cartan subalgebra of $\g$, that is, a $\theta$-invariant Cartan subalgebra containing
$\la$, and $\ld=\lt^\theta$. The lower horizontal arrow is an embedding by Lemma \ref{lm compare orbits}.
Recall from  Lemma 2.1.1. in \cite{NgoLemme}, that the scheme of centralisers 
$C\subset\gr\times G$ defined analogously to $C^\theta$ (cf. Definition \ref{defi Ctheta})
descends to a scheme of abelian groups $J\to\lt/W$. Using adjunction, one sees that its $S$ points are given by
\begin{equation}\label{eqJ}
J(S)=\Hom_G(S\times_{\lt/W}\gr, C).
\end{equation}
The group scheme $J$ is isomorphic to another group scheme whose $S$ points are given by
\begin{equation}\label{eq mcT}
	\mc{T}(S)=\left\{f: \hat{S}\to T\ \left|
	\begin{array}{l}
	W-\mathrm{equivariant}\\
	\alpha(f(x))=1\textrm{ if }s_{\alpha}(x)=x,\alpha\in\Delta(\g,\lt)
	\end{array}\right.
	\right\}
\end{equation}
for $S\to \lt/W$ and $\hat{S}:= S\times_{\lt/W}\lt$.

On $\la/W(\la)$ we consider the following sheaf of groups: 
\begin{equation}\label{eq mcTtheta}
\mc{T}_\theta(S)=\left\{f: \ol{S}\to T_\theta\ 
\left|
\begin{array}{l}
W(\la)-\mathrm{equivariant}\\
\mathrm{satisfying }\ $\eqref{eq:cond_ram_Ttheta}$,
\end{array}\right.\right\}
\end{equation}
where
\begin{equation}\label{eq:cond_ram_Ttheta}
w(x)=x\then f(x)\in (T_\theta^w)^0\tag{$\dagger$}
\end{equation}
In the above $\ol{S}:=S\times_{\la/W(\la)}\la$ is considered as a subscheme of $\hat{S}:=S\times_{\lt/W}\lt$ and $(T_\theta)^0$ is the identity component of $\Ttheta$.
\begin{prop}\label{prop:Ttheta_representable}
	The sheaf $\mc{T}_\theta$ is the intersection of the Weil restriction of the torus $\laC\times T_\theta$ along the finite flat morphism $\laC\longrightarrow\laC/W(\la)$  and $\iota^*\mc{T}$, where $\iota$ is defined in \eqref{eq embedding GITs}.  In particular, it is representable by a scheme of tori over $\laC/W(\la)$.
\end{prop}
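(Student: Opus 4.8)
The plan is to exhibit $\mc{T}_\theta$ as a subsheaf of the ambient Weil restriction $\mathcal{W}:=\mathrm{Res}_{\laC/(\laC/W(\la))}(\laC\times T)$, whose $S$-points are \emph{all} maps $\ol{S}\to T$, and to realise both factors of the asserted intersection inside $\mathcal{W}$. The Weil restriction of $\laC\times\Ttheta$ is simply the subsheaf $\mathcal{W}_\theta\subset\mathcal{W}$ of maps landing in $\Ttheta$. For $\iota^*\mc{T}$, the closed embedding $\ol{S}\hookrightarrow\hat{S}$ induced by $\laC\hookrightarrow\lt$ (compatible with $\iota$ by \eqref{eq embedding GITs} and Lemma~\ref{lm compare orbits}) yields a restriction map $\iota^*\mc{T}(S)\to\{f\colon\ol{S}\to T\}$, $g\mapsto g|_{\ol{S}}$. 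The first point I would check is that this restriction is injective: over the regular locus, the fibre of $\hat{S}$ is a single free $W$-orbit that meets $\ol{S}$ in the corresponding $W(\la)$-orbit (here one uses $\mr=\m\cap\gr$, Remark~\ref{rk mr c gr}), so a $W$-equivariant $g$ is determined there by $g|_{\ol{S}}$, and hence everywhere since $T$ is separated. This realises $\iota^*\mc{T}$ as a subsheaf of $\mathcal{W}$, so that $\mathcal{W}_\theta\cap\iota^*\mc{T}$ is well defined.

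The core is the equality $\mc{T}_\theta=\mathcal{W}_\theta\cap\iota^*\mc{T}$, which I would prove by comparing the defining conditions pointwise on $\ol{S}$. For $x\in\ol{S}$ mapping into $\la\subset\lt$ one has $\alpha(x)=\ol\alpha(x)$ for the restricted root $\ol\alpha=\alpha|_\la$, so $s_\alpha$ fixes $x$ iff $\ol\alpha(x)=0$; since $G_\R$ is quasi-split there are no imaginary roots and $\Delta(\g,\lt)\to\Sigma(\la)$ is a surjection of root systems, so the reflections of $W(\la)$ fixing $x$ are exactly the $s_{\ol\alpha}$ with $\ol\alpha(x)=0$, governed by the roots $\alpha$ with $s_\alpha(x)=x$. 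The key lemma I would isolate asserts that, for $w\in W(\la)$ fixing $x$, the intrinsic condition $f(x)\in(\Ttheta^w)^0$ of \eqref{eq:cond_ram_Ttheta} is equivalent to $f(x)\in\Ttheta$ together with the complex ramification conditions $\alpha(f(x))=1$ of \eqref{eq mcT} for all roots $\alpha$ with $s_\alpha(x)=x$. Granting this, a $W(\la)$-equivariant $f\colon\ol{S}\to\Ttheta$ satisfying \eqref{eq:cond_ram_Ttheta} extends, using the injectivity above and $W$-equivariance, to a unique $g\in\iota^*\mc{T}(S)$, giving $\mc{T}_\theta\subseteq\mathcal{W}_\theta\cap\iota^*\mc{T}$; conversely, the restriction to $\ol{S}$ of any $g$ in the intersection is $\Ttheta$-valued, inherits $W(\la)$-equivariance from the $W$-equivariance of $g$ and the $W(\la)$-stability of $\ol{S}$, and satisfies \eqref{eq:cond_ram_Ttheta} by the lemma.

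Representability then follows formally. The factor $\mathcal{W}_\theta$ is representable and affine because $\laC\to\laC/W(\la)$ is finite flat (a quotient by the reflection group $W(\la)$, with $\laC/W(\la)$ smooth by Chevalley--Shephard--Todd) and $\laC\times\Ttheta$ is affine; moreover it is of multiplicative type, with torus fibres over the regular locus, being there a form of a product of copies of $\Ttheta$. The factor $\iota^*\mc{T}$ is representable and affine because $\mc{T}\cong J$ is Ng\^o's affine commutative group scheme from Lemma~2.1.1 of \cite{NgoLemme} (cf. \eqref{eqJ}--\eqref{eq mcT}). Hence the intersection, being the fibre product $\mathcal{W}_\theta\times_{\mathcal{W}}\iota^*\mc{T}$ of representable affine sheaves over $\laC/W(\la)$, is representable by an affine commutative group scheme whose fibres are the subtori of the $\Ttheta$-products cut out by the ramification conditions; this is the asserted scheme of tori.

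The main obstacle is the ramification lemma of the second paragraph: translating the intrinsic condition $f(x)\in(\Ttheta^w)^0$ for $w\in W(\la)$ into the system of complex root conditions $\alpha(f(x))=1$. This requires a careful analysis, valid only in the quasi-split case, of how $W(\la)$ and its isotropy subgroups act on $\Ttheta=T\cap\Gtheta$ relative to the $W$-action on $T$, and in particular of why passing to the identity component $(\Ttheta^w)^0$ exactly absorbs the finite-order discrepancy between the fixed-point group of $w$ and the intersection of the relevant root kernels. I expect this to rest on the absence of imaginary roots and on the surjectivity of $\Delta(\g,\lt)\to\Sigma(\la)$ used above.
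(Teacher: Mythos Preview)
Your overall structure mirrors the paper's: realize both factors as representable (the Weil restriction via finite flatness of $\la\to\la/W(\la)$, and $\iota^*\mc{T}$ via Ng\^o), then identify the intersection with $\mc{T}_\theta$. The injectivity of restriction $\iota^*\mc{T}\to\mathcal{W}$ that you prove is essentially dual to what the paper establishes.

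Where you diverge is in the core identification. You aim for a direct pointwise ``ramification lemma'' translating condition~\eqref{eq:cond_ram_Ttheta} into the root conditions defining $\mc{T}$. The paper takes a different route for each inclusion. For one direction, it invokes the Donagi--Gaitsgory description (Theorem~\ref{thm DG cameral data}) to argue that the ramification conditions of $\mc{T}$, a priori stated for reflections, extend to conditions indexed by arbitrary elements of $W$ (via the way the ramification divisors $\mc{R}_w$ are defined as a cocycle); this lets one read off that elements of the intersection, restricted to $\ol{S}$, satisfy \eqref{eq:cond_ram_Ttheta}. For the other inclusion, rather than verifying the root conditions directly, the paper shows that extending a section $s\in\mc{T}_\theta(S)$ by $W$-equivariance is well defined: if $w\cdot x\in\la$ for some $x\in\la$ and $w\in W$, then $w\cdot x=w'\cdot x$ for some $w'\in W(\la)$, and condition~\eqref{eq:cond_ram_Ttheta} forces $s(x)\in(\Ttheta)^{w^{-1}w'}$, whence the prescription $s(w\cdot x):=w\cdot s(x)$ is unambiguous. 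Injectivity follows at once.

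Your approach would be more self-contained if the ramification lemma can be proved directly, but you have correctly located the real content. The paper's appeal to Theorem~\ref{thm DG cameral data} as a black box sidesteps having to analyse how $(\Ttheta^w)^0$ relates to the kernels of the unrestricted roots vanishing at $x$---a comparison that is delicate precisely when $C_W(x)\not\subset W(\la)$, i.e.\ outside the cases of Remark~\ref{rk regular qsplit}. If you pursue the direct lemma, that remark tells you where the difficulty concentrates.
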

\begin{proof}
	Representability of the Weil restriction  follows by  \cite[\S 10]{BLR}, and that of $\iota^*\mc{T}$ by \cite[Proposition 2.4.7]{NgoLemme}, hence, the intersection is also representable. So representability follows by proving that  $\mc{T}_\theta$ is the intersection of both.
	
	Now, by adjunction
	$
	\iota^*\mc{T}(U)
	$
	consists of $W$-equivariant morphisms $U\times_{\la/W(\la)}\lt\to T$ satisfying $\eqref{eq:cond_ram_Ttheta}$ for reflections along roots. By Theorem \ref{thm DG cameral data}, the same must be true for any element of the Weyl group, by the way the ramification divisors are defined. Thus, the intersection with the Weil restriction of $\la\times T_\theta$ consists of morphisms $U\times_{\la/W(\la)}\lt\to T$  satisfying $\eqref{eq:cond_ram_Ttheta}$ arising from $W(\la)$-equivariant morphisms. Hence, all we need to prove is that extension of sections of $\mc{T}_\theta$ by $W$-equivariance is well defined and injective. 
	
	Given $w\in W$, let $w\cdot x\in \laC$ for some $x\in \laC$. Then, there exists some $w'\in W(\la)$ such that $w'x=wx$, namely
	$$
	w^{-1}w'\cdot x=x.
	$$
	The condition $\eqref{eq:cond_ram_Ttheta}$ ensures that $s(x)\in (T_\theta)^{w^{-1}w'}$, thus we may unambiguoulsy define 
	$$
	s(w\cdot x)=w\cdot s(x)
	$$
	for any $w\in W$ and $x\in\la$. This forces to any two distinct sections to extend differently, so injectivity follows.
\end{proof}
\begin{prop}\label{prop:Jtheta_embeds_in_J}
	We have an embedding $J_\theta\subset \iota^*J$, where $\iota$ is defined in \eqref{eq embedding GITs}.
\end{prop}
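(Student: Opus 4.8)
The plan is to realise the embedding fibrewise as the inclusion of centralisers $C_{\Gtheta}(x)\subset C_G(x)$, and then to descend it from the atlas $\mr$ down to $\la/W(\la)$ by faithfully flat descent. First I would identify the pullback of $\iota^*J$ to $\mr$. Recall that $J$ descends from the centraliser scheme $C\subset\gr\times G$ along $\chi_\g\colon\gr\to\lt/W$, i.e. $C=\gr\times_{\lt/W}J$, with $S$-points as in \eqref{eqJ}. By commutativity of the square \eqref{eq lshitt} we have $\iota\circ\pi_\m=\chi_\g|_{\mr}$, where $\pi_\m\colon\mr\to\la/W(\la)$ is the restricted Chevalley map; hence
$$
\pi_\m^*\,\iota^*J=\mr\times_{\lt/W}J=\mr\times_{\gr}C=C|_{\mr},
$$
the restriction of $C$ to $\mr$, namely $\{(m,g)\in\mr\times G:\Ad_g(m)=m\}$. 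On the other hand, $C_\theta$ descends to $J_\theta$ along the same (faithfully flat) map $\pi_\m$ by Proposition \ref{prop Ctheta descends}, so $\pi_\m^*J_\theta=C_\theta$. Since $\Gtheta\subset G$, there is a tautological closed immersion of group schemes over $\mr$,
$$
j\colon C_\theta=\{(m,h)\in\mr\times\Gtheta:\Ad_h(m)=m\}\hookrightarrow C|_{\mr},
$$
which fibrewise is the inclusion $C_{\Gtheta}(x)=C_G(x)\cap\Gtheta\hookrightarrow C_G(x)$.

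It then suffices to check that $j$ is compatible with the descent data along $\pi_\m$, for then it descends to a closed immersion $J_\theta\hookrightarrow\iota^*J$ over $\la/W(\la)$, which is the desired embedding. The descent datum of $C_\theta$ is, by the proof of Proposition \ref{prop Ctheta descends}, the conjugation isomorphism $((m,h),g)\mapsto((m,h),\Ad_h g)$ with the conjugating element taken in $\Gtheta$; the descent datum of $C|_{\mr}=\pi_\m^*\iota^*J$ is the restriction of Ng\^o's descent datum for $J$, namely conjugation by an element of $G$ relating two $G$-conjugate points of $\mr$ lying over the same point of $\la/W(\la)$. The key point is that these two data agree through $j$: by Lemma \ref{lm compare orbits}(3), any two elements of $\mr$ that are $G$-conjugate are already $\Gtheta$-conjugate, so the canonical isomorphism between their $G$-centralisers is realised by some $\Ad_h$ with $h\in\Gtheta$, and such an $\Ad_h$ preserves the subgroups $C_{\Gtheta}$. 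Hence $j$ intertwines the two descent data.

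Descending $j$ by faithfully flat descent (closed immersions being fppf-local on the base) yields the embedding $J_\theta\subset\iota^*J$, fibrewise $C_{\Gtheta}(x)\hookrightarrow C_G(x)$. The one genuinely nontrivial input is the compatibility of the descent data, and this is exactly where Lemma \ref{lm compare orbits}(3) — that $G$-conjugacy inside $\mr$ is already detected inside $\Gtheta$ — together with the commutativity of \eqref{eq lshitt} is used; once $C|_{\mr}$ is identified with $\pi_\m^*\iota^*J$ and the conjugating elements are located inside $\Gtheta$, the remainder is formal. The same map can also be written directly on $S$-points: using \eqref{eqJ} for $\iota^*J$ and the analogous adjunction description of $J_\theta$, one extends a $\Gtheta$-equivariant section on $S\times_{\la/W(\la)}\mr$ to a $G$-equivariant section on $S\times_{\lt/W}\gr$ by $\Ad$-equivariance (well defined since the value lies in the abelian group $C_G(x)$), and injectivity is then immediate by restricting back to $\mr$.
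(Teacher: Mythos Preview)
Your proof is correct and follows essentially the same route as the paper's: both identify $\pi_\m^*\iota^*J$ with $C|_{\mr}$ via the commutative square, use the tautological inclusion $C_\theta\subset C|_{\mr}$, and then pass to $\la/W(\la)$ using that $G$-conjugacy in $\mr$ is already $\Gtheta$-conjugacy (Lemma~\ref{lm compare orbits}(3)) together with commutativity of regular centralisers. The only difference is packaging: you phrase the passage to $\la/W(\la)$ as checking compatibility of descent data and invoking fppf-locality of closed immersions, whereas the paper uses adjunction and an explicit $S$-point formula $s(g\cdot x)=g\cdot s(x)$ --- a reformulation you yourself give in your final paragraph.
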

\begin{proof}
Consider the commutative diagram
$$
\xymatrix{
	\mr\ar[r]^{\tilde{\iota}}\ar[d]_\chi&\gr\ar[d]^\chi\\
		\la/W(\la)\ar[r]_{\iota}&\lt/W.
	}
$$	
Then $C|_{\mr}:=\tilde{\iota}^*C=\tilde{\iota}^*\chi^*J=\chi^*\iota^*J$. So given that $C_\theta\subset C|_{\mr}$, adjunction gives a morphism
	$J_\theta\to \chi_*\tilde \iota^*C$. Using the fact that adjunction yields isomorphisms between triple combinations, we have that on the level of $S$-points the morphism sends a section of $J_\theta(S)$ to a section of $\iota^*J(S)$ by establishing $s(g\cdot x)=g\cdot s(x)$. This is well defined, as if
	$g\cdot x\in\mr$ for $x\in\mr$ then there exists some $g'\in G_\theta$ such that
	$g'\cdot x=y$, so that $s(y)=g'\cdot s(x)$, and by commutativity of the centralisers, $s(y)=g\cdot s(x)$, so  extensions are well defined. 
	 
	 
	 Injectivity follows from left exactness of pullback, or by tracking all morphisms involved, which are injective.
\end{proof}
On ${\g}\times {G}$ we define the involution $\Theta$ by
\begin{equation}\label{eq Theta}
\Theta:(x,g)\mapsto (-\theta x,g^\theta). 
\end{equation}
The subscheme of fixed points is $\m\times H$. The restriction  of $\Theta$ to $\lt\times T$ also induces an involution whose fixed point set is $\la\times D$.
 \begin{prop}\label{thm compare sheaves} 
 	\begin{enumerate}	
 		\item\label{descent theta J} The involution $\Theta$ descends to an involution on $J\to \lt/W$  (denoted also by $\Theta$)
 		such that $\iota^*J^\Theta\cong J^\Theta_\theta$.
 		
 			\item\label{descent theta T} The restriction of $\Theta$ to $\lt\times T$ descends to an involution  (also denoted by $\Theta$) on $\mc{T}\to \lt/W$ such that $\mc{T}_\theta^\Theta\cong\mc{T}^\Theta$.
 		
 		\item\label{theta equivar J=T} The isomorphism $J\cong \mc{T}$ is $\Theta$ equivariant. In particular
 		$$
 		{J}^\Theta\cong\mc{T}^\Theta.
 		$$ 
 		\end{enumerate}. 
	\end{prop}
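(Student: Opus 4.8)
The three clauses all concern descending the involution $\Theta$ of \eqref{eq Theta} and identifying its fixed loci, so the plan is: first establish the descents, then prove the equivariance \eqref{theta equivar J=T} by inspecting Ng\^o's isomorphism, and finally pin down the two fixed-point schemes using the embeddings of Propositions \ref{prop:Jtheta_embeds_in_J} and \ref{prop:Ttheta_representable}. I would begin with the elementary descent facts. The map $\Theta$ preserves the centraliser scheme $C\subset\gr\times G$: if $\Ad_g x=x$ then $\Ad_{g^\theta}(-\theta x)=-\theta(\Ad_g x)=-\theta x$, since $\theta$ is an automorphism of $\g$ and $G$. The linear map $\nu:=-\theta$ on $\lt$ acts as $+1$ on $\la$ and $-1$ on $\ld$; being a product of $-1$ (central in $\GL(\lt)$) and $\theta$ (which permutes the roots), it normalises $W$, hence descends to an involution $\bar\nu$ on $\lt/W$ whose fixed locus contains $\iota(\la/W(\la))$ by Lemma \ref{lm compare orbits}. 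Since $\Theta$ covers $\nu$ and preserves $C$, it descends along $\chi$ to an involution of $J$, giving the first clause of \eqref{descent theta J}; the same argument applied to $\lt\times T$ (using that $\theta$ preserves $T$ and carries reflections to reflections, hence preserves both the $W$-equivariance and the ramification condition in \eqref{eq mcT}) descends $\Theta$ to $\mc{T}$, giving the first clause of \eqref{descent theta T}. Note that on $\mr\subset\m$ one has $\nu x=-\theta x=x$, so on $J_\theta$ and $\mc{T}_\theta$ the involution covers the identity of $\la/W(\la)$ and acts internally by $g\mapsto g^\theta$, matching Proposition \ref{prop local gerbe}.

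The heart is \eqref{theta equivar J=T}, namely that Ng\^o's isomorphism $J\cong\mc{T}$ of \eqref{eqJ}--\eqref{eq mcT} intertwines the two involutions. I would check this on the explicit construction: over a regular $x$ the isomorphism sends $g\in C_G(x)$ to the $W$-equivariant rule $b\mapsto(\text{image of }g\text{ in }B/U=T)$, where $b$ runs over the Borels with $x\in\Lie(b)$, i.e.\ over the fibre of the Grothendieck--Springer cover \eqref{eq GS resolution}. The point that reconciles this with the sign in $\nu=-\theta$ is that $\Lie(b)$ is a linear subspace, so the Borels through $x$ coincide with those through $-x$, while $\theta$ carries the Borels through $x$ bijectively onto those through $\theta x$; hence $\theta$ identifies the cameral fibre over $x$ with that over $-\theta x$, compatibly with the $W$-action. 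Transporting $g\mapsto g^\theta$ through $C_G(x)\subset B\to B/U=T$ against $t\mapsto t^\theta$ on $\mc{T}$ then matches on the nose, which is exactly $\Theta$-equivariance, and taking fixed points yields $J^\Theta\cong\mc{T}^\Theta$. I expect this bookkeeping---tracking the $\theta$-action on Borels, on $B/U$, and on the ramification divisors---to be the main obstacle, precisely because of the $-1$ in $\nu$.

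Finally I would deduce the fixed-point identifications by fibrewise comparison. For \eqref{descent theta J}, Proposition \ref{prop:Jtheta_embeds_in_J} gives a $\Theta$-equivariant embedding $J_\theta\hookrightarrow\iota^*J$, hence $J_\theta^\Theta\hookrightarrow\iota^*J^\Theta=(\iota^*J)^\Theta$. Over a point $a\in\la/W(\la)$ pick $x\in\mr$ with $\chi(x)=\iota(a)$; since $x\in\m$ we have $-\theta x=x$, so $\Theta$ acts on the fibres by $g\mapsto g^\theta$ and, using $H=G^\theta\subset\Gtheta$,
$$
(\iota^*J)^\Theta_a=C_G(x)^\theta=C_H(x)=C_{\Gtheta}(x)^\theta=(J_\theta)^\Theta_a.
$$
A closed immersion of smooth group schemes that is a fibrewise isomorphism over the reduced base $\la/W(\la)$ is an isomorphism, proving $\iota^*J^\Theta\cong J^\Theta_\theta$. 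For \eqref{descent theta T} I would argue analogously from Proposition \ref{prop:Ttheta_representable}, which realises $\mc{T}_\theta$ as $\iota^*\mc{T}$ intersected with the Weil restriction of $\la\times T_\theta$ (a subsheaf visibly preserved by $\Theta$, as $\Theta$ stabilises $T_\theta$): a $\Theta$-fixed section $f$ of $\iota^*\mc{T}$ obeys $f(x)=\theta(f(-\theta x))$, so on the fixed locus $\overline{S}=\{x:-\theta x=x\}$ it takes values in $T^\theta=D\subset T_\theta$. Thus every $\Theta$-fixed section of $\iota^*\mc{T}$ already lies in that Weil restriction, giving $\mc{T}_\theta^\Theta=\iota^*\mc{T}^\Theta$; combined with \eqref{theta equivar J=T} this yields the stated $\mc{T}_\theta^\Theta\cong\mc{T}^\Theta$ and, together with \eqref{descent theta J}, the consistency isomorphism $J_\theta^\Theta\cong\mc{T}_\theta^\Theta$.
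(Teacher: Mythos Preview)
Your proposal is correct and tracks the paper's proof closely. The descent of $\Theta$ to $J$ and $\mc{T}$ is argued exactly as in the paper (via $(\Theta,-\theta)$-equivariance of the structure maps), and your treatment of \eqref{theta equivar J=T} unpacks the same mechanism the paper invokes tersely: $\theta$-equivariance of $C\to\ol{B}\to\hat{\g}_{reg}\times T$, with the crucial remark that although the $\theta$-anisotropic Borel $B$ is not $\theta$-stable, the quotient $B/[B,B]\cong T$ is, which is what makes ``$g\mapsto g^\theta$ against $t\mapsto t^\theta$'' commute.

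The one genuine difference is in the fixed-point identification $\iota^*J^\Theta\cong J_\theta^\Theta$. The paper builds an explicit inverse on sections: it views $\iota^*J(S)\subset\Hom_{\Gtheta}(S\times_{\la/W(\la)}\gr,C)$ and restricts along $i_S:S\times_{\la/W(\la)}\mr\hookrightarrow S\times_{\lt/W}\gr$ to land in $J_\theta^\Theta(S)$, then checks this is inverse to the embedding of Proposition \ref{prop:Jtheta_embeds_in_J}. You instead argue fibrewise, using $C_G(x)^\theta=C_H(x)=C_{\Gtheta}(x)^\theta$ for $x\in\mr$ and the principle that a closed immersion of smooth group schemes which is a fibrewise isomorphism over a reduced base is an isomorphism. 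Both are valid; your route is shorter but tacitly uses smoothness of the fixed-point schemes (fine in characteristic zero), while the paper's section-level argument avoids that appeal. For \eqref{descent theta T} the two arguments essentially coincide: the paper extends $\Theta$-fixed sections of $\mc{T}_\theta$ to $\iota^*\mc{T}$ by $W$-equivariance, and you observe via Proposition \ref{prop:Ttheta_representable} that any $\Theta$-fixed section of $\iota^*\mc{T}$ already takes values in $D\subset T_\theta$ on $\ol{S}$, hence lies in $\mc{T}_\theta$.
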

\begin{proof}
\eqref{descent theta J}  Note that the source and target maps $s,\ t:\gr\times G\to\gr$ are $(\Theta,-\theta)$ equivariant. So $\Theta$ induces an action on $J$ (see equation (\ref{eqJ})) such that $J\to\lt/W$ is $(\Theta,-\theta)$-equivariant (the action on $\lt/W$ follows from stability of orbits by the action of $\theta$). By Proposition \ref{prop:Jtheta_embeds_in_J} and compatibility of the involutions, we have $J_\theta^\Theta\subset \iota^*J^\Theta$. For the inverse,  by seeing  
	$\iota^*J(S)\subset \Hom_{\Gtheta}(S\times_{\la/W(\la)}\g, C)$, we have that pullback by the embedding  $i_S:S\times_{\la/W(\la)}\mr\plonge S\times_{\lt/W}\gr$ (cf. Lemma \ref{lm compare orbits}) induces a morphism 
	$$
	i_S^*: i^*J(S)^\Theta\to J_\theta^\Theta(S)
	$$
	 which follows by definition of the involutions taking into account that  $\la/W(\la)=(\lt/W)^{-\theta}$. 
	 
	 Both maps are clearly inverse, and so the conclusion follows.

\eqref{descent theta T}  The same arguments as in \eqref{descent theta J} (see also Remark \ref{rk:embeding_Ttheta_T}) imply that $\Theta|_{\lt\times T}$ induces an involution $\Theta$ on $\mc{T}$. Likewise, for each  $S\to\la/W(\la)$, there is an embedding $i_S:S\times_{\la/W(\la)}\la\plonge S\times_{\lt/W}\lt$ by Lemma \ref{lm compare orbits}, and restriction by $i_S$ induces a morphism $i_S^*: \iota^*\mc{T}^\Theta(S)\to \mc{T}^\Theta_\theta(S)$. The conditions imposed on ramification points ensure that any section $f\in  \mc{T}(S)^\Theta_\theta(S)$ can be uniquely extended to a
    section of $\iota^*\mc{T}^\Theta(S)$ by setting $f(wx)=wf(x)$ for each $w\in W$. Injectivity follows because $\mc{T}^\Theta(S)$ is contained in the subset 
    $$
  \{ s\in \mc{T}(S)\,:\, w(s(x))=s(x) \,\pt w\in C_W(x), x\in \laC\}.
    $$

\eqref{theta equivar J=T} As for checking that the isomorphism $J\to \mc{T}$ respects the involutions, it follows from $\theta$-equivariance of the morphisms $C\to \ol{B}$ and $\ol{B}\to \hat{\g}_{reg}\times T$ where  $\ol{B}\to \hat{\g}_{reg}$ is the sheaf of Borel subgroups determined by the choice of a $\theta$ anisotropic  Borel subgroup $B$ containing $T$. Note that although $B$ is not $\theta$ invariant, $B/[B,B]$ is and $B/[B,B]\cong T$ is $\theta$-equivariant. See \cite[Proposition 2.4.2]{NgoLemme} for details. The second statement is immediate from this.	
\end{proof}
The following theorem is the key towards a cocyclic description of $BJ_\theta$ (and thus, of the fibres of the local Hitchin map \eqref{eq lshit}):
\begin{thm}\label{thm Jtheta=Ttheta}
	We have $\mc{T}_\theta\cong J_\theta$.
\end{thm}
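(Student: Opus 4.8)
The plan is to compare $J_\theta$ and $\mc{T}_\theta$ not directly but as two subsheaves of a common ambient group scheme over $\la/W(\la)$, namely $\iota^*J\cong\iota^*\mc{T}$, obtained by pulling back Ng\^o's isomorphism $J\cong\mc{T}$ along the embedding $\iota$ of \eqref{eq embedding GITs}. This isomorphism is $\Theta$-equivariant by Proposition \ref{thm compare sheaves}\eqref{theta equivar J=T}. By Proposition \ref{prop:Jtheta_embeds_in_J} we have $J_\theta\subset\iota^*J$, and by Proposition \ref{prop:Ttheta_representable} we have $\mc{T}_\theta=\iota^*\mc{T}\cap\mathrm{Res}(\la\times\Ttheta)$ inside $\iota^*\mc{T}$; so it suffices to show that, under the identification $\iota^*J=\iota^*\mc{T}$, the two subsheaves coincide.

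First I would make the defining conditions explicit on $S$-points. An $S$-point of $\iota^*\mc{T}$ is a $W$-equivariant map $f\colon\hat{S}\to T$ satisfying \eqref{eq:cond_ram_Ttheta} for reflections; by Proposition \ref{prop:Ttheta_representable} such an $f$ lies in $\mc{T}_\theta$ exactly when its restriction to the real locus $\ol{S}=S\times_{\la/W(\la)}\la$ factors through $\Ttheta=T\cap\Gtheta$. On the other hand, an $S$-point of $\iota^*J$ is a $G$-equivariant centraliser section $\sigma\colon S\times_{\lt/W}\gr\to C$, and by the construction of the embedding in Proposition \ref{prop:Jtheta_embeds_in_J} it lies in $J_\theta$ precisely when its restriction to $S\times_{\la/W(\la)}\mr$ takes values in $C_\theta=C|_{\mr}\cap(\mr\times\Gtheta)$, i.e.\ factors through $\Gtheta$. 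Thus both conditions have the same shape: \emph{the section, restricted to the real locus, takes values in $\Gtheta$}.

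The heart of the argument is then to show these two conditions match under $\iota^*J=\iota^*\mc{T}$. This I would deduce from the $\Theta$- (equivalently $\theta$-) equivariance of the chain $C\to\ol{B}\to\hat{\g}_{reg}\times T$ used in Proposition \ref{thm compare sheaves}\eqref{theta equivar J=T}: the induced map on centralisers is $\Gtheta$-compatible, so it carries $C_\theta$ into $\Ttheta$, and conversely a centraliser section whose image in $T$ lands in $\Ttheta$ must itself land in $\Gtheta$. Over the regular semisimple locus $\lar/W(\la)$ this is immediate and is in fact already recorded in Corollary \ref{cor Jthetas tori}, where both sheaves are identified with $\Hom_{W(\la)}(\ol{S}_b,\Ttheta)$, the condition \eqref{eq:cond_ram_Ttheta} being vacuous because the cameral cover is unramified there.

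The main obstacle I expect is the behaviour over the ramification locus, i.e.\ where $\ol{S}\to S$ ramifies. There the map $C_G(x)\to T$ has a nontrivial (unipotent) kernel, and the matching of ``lands in $\Gtheta$'' with the identity-component condition $f(x)\in(\Ttheta^{w})^0$ of \eqref{eq:cond_ram_Ttheta} is no longer formal: for $x\in\mr$ with $w(x)=x$ one must check that an element of $C_G(x)\cap\Gtheta$ maps into $(\Ttheta^{w})^0$ and that this characterises membership in $\Gtheta$. I would handle this by reducing, as in \cite[\S2.4]{NgoLemme}, to the rank-one Levi computation attached to each reflection, using the decompositions $\g_\theta=\h\oplus\z_\m(\g)$ and $\Gtheta=F\cdot H$ from Lemma \ref{lm exact seq Gtheta}, together with quasi-splitness (so that $\mr\subset\gr$ and the relevant centralisers are abelian, cf.\ Lemma \ref{lm compare orbits}), which control exactly how $F$ and the unipotent parts interact with the component groups of $\Ttheta$. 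Once the two conditions are shown equivalent at every point, the equality $J_\theta=\mc{T}_\theta$ of subsheaves of $\iota^*J=\iota^*\mc{T}$ follows, proving the theorem.
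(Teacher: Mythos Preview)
Your approach is essentially the paper's: embed both $J_\theta$ and $\mc{T}_\theta$ into $\iota^*J\cong\iota^*\mc{T}$ via Propositions \ref{prop:Jtheta_embeds_in_J} and \ref{prop:Ttheta_representable}, then use $\Theta$-equivariance of Ng\^o's isomorphism (Proposition \ref{thm compare sheaves}\eqref{theta equivar J=T}) to identify them as the same subsheaf. The paper's write-up is terser: it observes that $\Theta$-equivariance forces $C_\theta|_{\mr}$ to descend to $\chi^*\iota^*\mc{T}\cap\Gtheta$, rewrites this over $\hat{\m}_{reg}=\mr\times_{\la/W(\la)}\lt$ as the intersection $(\hat{\m}_{reg}\times\Ttheta)\cap\mc{T}|_{\hat{\m}_{reg}}$, and then invokes Proposition \ref{prop:Ttheta_representable} to identify this with $\mc{T}_\theta$.

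Your separate worry about the ramification locus, and the proposed rank-one Levi reduction, is more than the paper does and more than is needed. The point is that condition \eqref{eq:cond_ram_Ttheta} is not an \emph{additional} constraint to be verified against ``lands in $\Gtheta$'': by Proposition \ref{prop:Ttheta_representable}, it is already absorbed into the ambient condition of lying in $\iota^*\mc{T}$, so that $\mc{T}_\theta$ is \emph{exactly} the $\Ttheta$-valued locus of $\iota^*\mc{T}$. Thus once you know the chain $C\to\ol{B}\to\hat{\g}_{reg}\times T$ is $\theta$-equivariant (a group-homomorphism statement, valid fibrewise at every point including ramification), ``$C$-section lands in $\Gtheta$'' and ``$\mc{T}$-section lands in $\Ttheta$'' match on the nose, and you are done without any Levi computation.
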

\begin{proof}
	From Proposition \ref{thm compare sheaves} \eqref{descent theta J} and \eqref{theta equivar J=T}, it is enough to prove that the isomorphism 
	$J\cong\mc{T}$ takes $J_\theta$ to $\mc{T}_\theta$. The isomorphism is $\Theta$ equivariant by Proposition \ref{thm compare sheaves} \eqref{theta equivar J=T}, which implies that that $C_\theta|_{\mr}$ descends to the same group scheme as $\chi^*\iota^*\mc{T}\cap G_\theta$. By tracing back the construction of $\mc{T}$ (see the proof of \cite[Proposition 2.4.2]{NgoLemme}), we see that this is equivalent to having that the restriction of $C_\theta$ to $\hat{\m}_{reg}:=\mr\times_{\la/W(\la)}\lt$ being mapped to the intersection of $\hat{\m}_{reg}\times \Ttheta$ and $\mc{T}|_{\hat{\m}_{reg}}$. But by Proposition \ref{prop:Ttheta_representable},  the latter group scheme and  $\mc{T}_\theta$ match, and so the isomorphism follows.
\end{proof}
\begin{rk}\label{rk regular qsplit}
	If $G_{\R}$ is such that $C_W(x)\subset W(\la)$ for all $x\in\la$, then we can alternatively define $\mc{T}_\theta$ as
	$$
	\mc{T}_\theta(S)=\left\{f: S\times_{\la/W(\la)}\la\to T_\theta\ \left|\begin{array}{l}
	W(\la)-\mathrm{equivariant}\\
	\alpha(f(x))=1\\
	\textrm{ if }s_{\lambda}(x)=x,\lambda=\alpha|_{\la}\in\Sigma(\la).
	\end{array}\right.\right\}
	$$
	There are two main differences with respect to \eqref{eq Ttheta}: on the one hand, condition $\eqref{eq:cond_ram_Ttheta}$ is substituted by a condition involving only restricted roots. The second difference, a consequence of the former, is that the cover $\ol{S}$ need not be seen as a subscheme of $\hat{S}$. It therefore becomes a statement about restricted cameral covers only. 
	Simple groups satisfying the former condition are all the split forms and $\SU(p,p)$.  They can be characterised by the fact that the set $\la_{sing}$ of points on which more than one restricted root $\lambda\in\Sigma(\la)$ vanishes is the intersection $\lt_{sing}\cap\la$ of points of $\la$ on which more than one root vanishes.  In terms of spectral curves, these are the ones with generically smooth spectral curves.
\end{rk}

\begin{rk}\label{rk:cam_cov+inv}
	The content of Proposition \ref{thm compare sheaves} can be reformulated in terms of descent to the stack $\stack{\la/W(\la)/\Z_2}$, where the action of $\Z_2$ on $\la/W(\la)$ is trivial. Indeed, on $\lt$ we can consider the action of the group $W_\Theta=W\rtimes\Theta$, where the action of $\Theta$ on $W$ is given by
	$w^\Theta(x)=\theta(w(-\theta(x)))$. Then $j:\stack{\la/W(\la)/\Z_2}\plonge\stack{\lt/W/\Z_2}$. The schemes
	$J_\theta$ and $J$ descend further to  $\ol{J}_\theta\to\stack{\la/W(\la)/\Z_2}$ and $\ol{J}\to\lt/W_\Theta$ respectively, and Proposition \ref{thm compare sheaves} says precisely that $j^*\ol{J}_{(\theta)}\cong \ol{J}_{(\theta)}$. Note that $\stack{\lt/W/\Z_2}$ parametrizes pairs of cameral covers related by an order two morphism, while  $\stack{\la/W(\la)/\Z_2}$ can be identified with the set of cameral covers
	together with involutions.
\end{rk}

\section{The global situation: twisted Higgs bundles}\label{sec:global}
In this section we study the structure of the Hitchin map $\shit_L$ defined in (\ref{eq hitchin real}) 
from that of $\lshit$ from Equation \eqref{eq lshit} analysed in Section \ref{sec:local_Higgs}.

Let us recall the notation: we fix $X$ a smooth complex projective curve of genus $g\geq 2$, and $L\to X$  an \'etale line bundle, 
that we will assume to be of degree at least $2g-1$ or equal to the canonical bundle $K$.
Let $\Higgsr(G_{\R}):=\stack{\mr\otimes L/H}$ and $\Higgsrt(G_{\R}):=\stack{\mr\otimes L/\Gtheta}$. Recall the Hitchin map $\shit_L$ defined in \eqref{eq hitchin real} and denote by $\shit_{L,\theta}$ its natural extension to
$\Higgsrt(G_{\R})$ (also induced by $\lshit$ as in \eqref{eq lshit} by noticing it is $\Gtheta$ equivariant). Let $J^\theta\to \mathbf{A}$ and $J_\theta\to\la/W(\la)$ be the regular centraliser schemes defined through Lemma \ref{lm gerbe 1} and Proposition \ref{prop Ctheta descends}. Similarly, we have sheaves of tori $\mc{T}_\theta\to \la/W(\la)$ and $\mc{T}\to\lt/W$ defined in \eqref{eq mcTtheta} and \eqref{eq mcT}
\begin{lm}\label{global band from local}
	The group scheme $J^\theta$ (resp. $J_\theta$) descends to group schemes $J^\theta_\C$ (resp. $J_{\theta,\C}$) over 
	$\stack{\mathbf{A}/\C^\times}$ (resp. $\stack{\la/W(\la)/\C^\times}$) whose respective pullbacks by 
	$[L]:X\to B\C^\times$ we denote by $J^\theta_L$ and $J_{\theta,L}$. Similar statements hold for $\mc{T}$ and $\mc{T}_\theta$, which yield group schemes $\mc{T}_{L},\, \mc{T}_{\theta,L}$ over $\mc{A}_L(G_\R)$. 
\end{lm}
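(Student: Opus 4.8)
The plan is to realise each of the four group schemes as a $\C^\times$-equivariant object over its base and then invoke the standard equivalence between $\C^\times$-equivariant group schemes over a space $Y$ and group schemes over the quotient stack $\stack{Y/\C^\times}$; the $L$-twisted versions are obtained afterwards by base change along $[L]\colon X\to B\C^\times$. Under this equivalence the entire content of the first assertion is that $J^\theta$ and $J_\theta$ (and $\mc{T},\mc{T}_\theta$) carry a natural $\C^\times$-equivariant structure compatible with the grading action on the base.

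First I would fix the action. The scaling $t\cdot m=tm$ on $\m$ restricts to $\mr$ (regularity is scale-invariant, since $C_H(tm)=C_H(m)$ for $t\neq 0$), commutes with the linear isotropy action of $H$ and with the $\Gtheta$-action, and descends through $\chi$ to the grading action on $\la/W(\la)$ determined by $t\cdot p_i=t^{d_i}p_i$, as well as to the induced action on $\mathbf{A}$. In particular the quotient maps $\mr\to\mathbf{A}$ and $\chi\colon\mr\to\la/W(\la)$ are $\C^\times$-equivariant; this is exactly the equivariance of the Chevalley morphism recorded in Section \ref{sec:preliminaries}, which is also the structure underlying Ng\^o's stack $\stack{\m/H\times\C^\times}$.

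Next I would lift the action to the centraliser schemes: since scaling commutes with the group actions and fixes the defining equation $h\cdot m=m$, the assignment $(m,h)\mapsto(tm,h)$ makes $C^\theta\to\mr$ and $C_\theta\to\mr$ into $\C^\times$-equivariant group schemes. The point requiring care is then that the descent data producing $J^\theta$ (Lemma \ref{lm gerbe 1}) and $J_\theta$ (Proposition \ref{prop Ctheta descends}) are themselves $\C^\times$-equivariant. For $J_\theta$ this reduces to observing that every morphism appearing in the descent argument of Proposition \ref{prop Ctheta descends} — the projections $p_1,p_2$, the map $f\colon\Gtheta\times\mr\to\mr\times_{\la/W(\la)}\mr$, and the glueing isomorphism $F\colon((m,h),g)\mapsto((m,h),\Ad_h g)$ — is $\C^\times$-equivariant for the diagonal action on fibre products and the trivial action on $\Gtheta$ (note $\Ad_h$ is independent of $t$). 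Hence the equivariant structure descends, and by the cited equivalence it produces $J^\theta_\C$ over $\stack{\mathbf{A}/\C^\times}$ and $J_{\theta,\C}$ over $\stack{\la/W(\la)/\C^\times}$. For $\mc{T}$ and $\mc{T}_\theta$ the argument is identical, the only novelty being that the action is now carried by the cameral covers $\hat S=S\times_{\lt/W}\lt$ and $\ol S=S\times_{\la/W(\la)}\la$: scaling on $\lt$ (resp. $\la$) commutes with the $W$- (resp. $W(\la)$-) action and preserves each ramification locus $\{x:s_\alpha(x)=x\}$, which is linear and hence scale-invariant, so the equivariance and the ramification conditions \eqref{eq:cond_ram_Ttheta} cutting out $\mc{T},\mc{T}_\theta$ are preserved while the target tori carry the trivial action. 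Alternatively one simply transports the $\C^\times$-structure across the isomorphism $J_\theta\cong\mc{T}_\theta$ of Theorem \ref{thm Jtheta=Ttheta} and its complex analogue $J\cong\mc{T}$.

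Finally, the forgetful morphisms $\stack{\mathbf{A}/\C^\times}\to B\C^\times$ and $\stack{\la/W(\la)/\C^\times}\to B\C^\times$ exhibit these as families over $B\C^\times$. Pulling back along $[L]\colon X\to B\C^\times$ replaces the bases by their $L$-twists — respectively $\mathbf{A}_L$ and $\mc{A}_L(G_\R)=tot(\la\otimes L/W(\la))$ over $X$, which is the fibre product $\stack{\la/W(\la)/\C^\times}\times_{B\C^\times}X$ — and defines $J^\theta_L,\,J_{\theta,L},\,\mc{T}_L,\,\mc{T}_{\theta,L}$ by base change. I expect the only genuine obstacle to be the verification in the third step that the glueing isomorphism of Proposition \ref{prop Ctheta descends} (and the faithfully flat descent of Lemma \ref{lm gerbe 1}) respects the $\C^\times$-action; once the action is fixed, everything else is formal.
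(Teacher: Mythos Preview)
Your proposal is correct and follows the same approach as the paper: the paper's proof is the single line ``Descent follows from $\C^\times$-equivariance of $C^\theta\to\mr$, $C_\theta\to\mr$, $\la\times T\to\la$ and $\la\times T_\theta\to\la$,'' and what you have written is a careful unpacking of exactly this equivariance, including the (correct) check that the glueing data in Proposition~\ref{prop Ctheta descends} are $\C^\times$-equivariant and that the twist by $L$ is obtained by base change along $[L]\colon X\to B\C^\times$.
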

\begin{proof}
	Descent follows from $\C^\times$-equivariance of $C^\theta\to\mr$,  $C_\theta\to\mr$, $\la\times T\to\la$ and $\la\times T_\theta\to\la$. 
\end{proof}
A consequence of the above is that the Hitchin map induces a gerbe structure on the corresponding stack:
\begin{thm}\label{gerbe Higgs bundles}
	Let $({G_{\R}},H,\theta,B)<({G},U,\tau,B)$ be a quasi-split real form of a connected complex reductive
	algebraic group. Let $X$ be a smooth complex projective curve, and let  
	$L\to X$ be an \'etale line bundle. Then, 
	\benum
	\item\label{thm Gtheta is gerbe} The Hitchin map
	\begin{equation}\label{eq shittheta}
	\shit_{L,\theta}:\Higgsrt(G_{\R})\to\la\otimes L/W(\la)  
	\end{equation}
	is a gerbe banded by the abelian group scheme $J_{\theta,L}$. 
	\item\label{thm Gtheta is neutral gerbe}  Moreover, $\Higgsrt(G_{\R}) \cong BJ_{\theta,L}$ when the degree of $L$ is even. 
	\item\label{thm G is neutral gerbe}  Likewise $\shit_{L}:\Higgsr(G_{\R})\to \mathbf{A}_L:= \mathbf{A}\times_{[L]}B\C^\times$ is a $J_L^\theta$-banded gerbe which is neutral whenever $L$ has even degree. 
	\eenum
\end{thm}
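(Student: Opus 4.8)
The plan is to reduce the global statement to the local gerbe results already established, using the "all line bundles at once" formalism from Section~\ref{sec:preliminaries} to transfer local structure across the curve. The key observation is that by local triviality of $L$ and of the relevant bundles in the \'etale topology, the Hitchin map $\shit_{L,\theta}$ is, \'etale-locally on $X$, just the local Hitchin map $\stack{\mr/\Gtheta}\to\la/W(\la)$ analysed in Section~\ref{sec:local_Higgs}. So I would begin by recalling the forgetful diagram over $B\C^\times\times\la/W(\la)$ together with Lemma~\ref{global band from local}, which provides the descended band $J_{\theta,L}$ over $\mc{A}_L(G_\R)$ as the pullback by $[L]:X\to B\C^\times$ of $J_{\theta,\C}$.

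For part~\eqref{thm Gtheta is gerbe}, I would argue that the two gerbe axioms — local non-emptiness and local connectedness — hold because they hold for the local model. Corollary~\ref{cor:local_gerbe} shows $\stack{\mr/\Gtheta}\to\la/W(\la)$ is a neutral gerbe banded by $J_\theta$; twisting by $L$ via the $\C^\times$-equivariant structure (Lemma~\ref{global band from local}) preserves local non-emptiness and connectedness because these are \'etale-local conditions on $X$, and \'etale-locally every $L$-twisted object reduces to an untwisted local Higgs bundle. The inertia stack descends to the abelian group scheme $J_{\theta,L}$ over the base, so the stack is locally connected over $\mc{A}_L(G_\R)$; this establishes the $J_{\theta,L}$-banded gerbe structure. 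Part~\eqref{thm G is neutral gerbe} for $\Higgsr(G_\R)$ over $\mathbf{A}_L$ follows identically, replacing Corollary~\ref{cor:local_gerbe} by Lemma~\ref{lm gerbe 1} and $J_\theta$ by $J^\theta$.

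For the neutrality statements in parts~\eqref{thm Gtheta is neutral gerbe} and~\eqref{thm G is neutral gerbe}, the strategy is to exhibit a global section, equivalently a global object of the stack, which is where I expect the real work to lie. The natural candidate comes from the Kostant--Rallis section $s_{KR}$ of the Chevalley morphism used in Corollary~\ref{cor:local_gerbe}, which by Remark~\ref{rk:HKR_factors_through_real} factors through the atlas $\mr$. Globalising this section requires twisting $s_{KR}$ by $L$; because $s_{KR}$ assigns to each invariant the degree-$d_i$ component, the twisted section lives in $L^{d_i}$-valued data, and assembling these into a genuine $L$-twisted $\Gtheta$-Higgs bundle on $X$ requires choosing a square root of $L$ — this is precisely the origin of the parity condition on $\deg L$. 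The hard part will be verifying that a global section exists exactly when $\deg L$ is even: one must show that the obstruction to globalising the Kostant--Rallis section is a class in $H^1(X,\cdot)$ controlled by the existence of a theta-characteristic $L^{1/2}$, which exists on any curve but interacts with the grading of the invariants so that only the even-degree case yields a well-defined twist. I would invoke the global section result of \cite{HKR} (the analogue of the Hitchin section) to conclude neutrality, and note that when $\deg L$ is odd the gerbe may be non-trivial, consistent with the stated hypothesis.
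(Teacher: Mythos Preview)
Your proposal is correct and follows essentially the same route as the paper: reduce to the local model via the $\C^\times$-equivariant formalism and the Cartesian diagram over $B\C^\times$, invoke Corollary~\ref{cor:local_gerbe} (resp.\ Lemma~\ref{lm gerbe 1}) for the gerbe axioms and Lemma~\ref{global band from local} for the band, then cite the Hitchin--Kostant--Rallis section of \cite{HKR} for neutrality. One terminological slip: a square root $L^{1/2}$ is not a theta-characteristic unless $L=K$, and it does \emph{not} exist on any curve---it exists precisely when $\deg L$ is even, which is the parity condition; the paper simply quotes \cite[Theorem~6.13]{HKR} for this without further analysis of the obstruction.
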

\begin{proof}
	For \eqref{thm Gtheta is gerbe}, note that $\stack{\mr/{G_\theta}\times \C^\times}$ is a gerbe banded by $J_{\theta, \C}$.
	Indeed, it is locally non empty over
	$\stack{{\la}/{G_\theta}\times \C^\times}$, since any covering of ${\la}/W({\la})$ 
	over which $\stack{\mr/{H}}$ is non-empty
	is a cover for $\stack{{\la}/W({\la})/{G_\theta}\times \C^\times}$ over which $\stack{\mr/{G_\theta}\times \C^\times}$ is non-empty. Local connectedness follows in the same way. Clearly, inertia descends
	to $J_{\theta,\C}$, by $\C^\times$ equivariance of $C^\theta\to\mr$, which implies that the band is $J_{\theta,\C}$. 
	
	On the other hand, the diagram
	$$
	\xymatrix{
		\Higgsrt(G)\ar[rr]\ar[dr]\ar[ddr]&&\stack{\m/{\Gtheta}\times \C^\times}\ar[dr]\ar[ddr]&\\
		&\la\otimes L/W(\la)\ar[d]\ar[rr]&&\stack{\la/W(\la)/\C^\times}\ar[d]\\
		&X\ar[rr]_{[L]}&&B\C^\times
	} 
	$$
	is Cartesian, so for (\ref{eq shittheta}) to define a gerbe it is enough to check non-emptiness, as
	the remaining structure is preserved by pullback. Local
	non emptiness is a consequence of local triviality of $L$ and Corollary \ref{cor:local_gerbe}. The statement
	about the band follows from Lemma \ref{global band from local}.
	
	Statement \eqref{thm Gtheta is neutral gerbe} follows from 
	Theorem 6.13 in \cite{HKR}, where the existence of a section of the Hitchin map is proved. 
	Note that the construction in \cite{HKR} goes through to the stacky setting.
	
	Finally, \eqref{thm G is neutral gerbe} follows by the same arguments as \eqref{thm Gtheta is gerbe}. Neutrality is a consequence of the fact that the Hitchin--Kostant--Rallis section from \cite{HKR} factors through the common atlas $(\m\otimes L)_{reg}$.
\end{proof}
\subsection{Cameral data}

We next proceed to the description of the fibres of the Hitchin map \eqref{eq shittheta} in terms of cameral data.

\begin{defi}
We define the cameral cover associated with $b\in H^0(X,\laC\otimes L/W(\la))$ to be the ramified $W$-Galois cover  of $X$ fitting in the Cartesian diagram
\begin{equation}\label{eq def cameral cover}
\xymatrix{
	\hat{X}_b\ar[rr]\ar[d]_{p_b}& &\lt\otimes L\ar[d]_{\pi}\\
	X\ar[rr]_{\iota\circ b}&&\lt\otimes L/W.
	}
\end{equation} 
We denote by $\ol{X}_b:=X\times_{b}\la\otimes L$, that is, the subscheme of $\hat{X}_b$ fitting in the Cartesian diagram
\begin{equation}\label{eq def cameral cover real}
\xymatrix{
	\ol{X}_b\ar[rr]\ar[d]_{q_b}& &\la\otimes L\ar[d]\\
	X\ar[rr]_{b}&&\la\otimes L/W(\la)
}
\end{equation} 
\end{defi}
\begin{rk}\label{rk:cam_subcover}
Note that  $\ol{X}_b$ is only a $W(\la)$-subcover if ramification is determined by $W(\la)$, namely, if $C_W(x)\subset W(\la)$ for all $x\in\laC$. By Remark \ref{rk regular qsplit}, in the simple group case this happens for split groups and $\SU(p,p)$. 
\end{rk}
\begin{thm}\label{thm fibres hm for Gtheta}
	The fibre of the Hitchin map $\shit_L$ over $b\in H^0(X,\laC\otimes L/W(\la))$ is given by the subcategory $\mc{C}am_\theta$ of $\mc{C}am$ consisting of weakly $R$-twisted, $N$-shifted principal $T$-bundles over $\hat{X}_b$ admitting a reduction of the structure group to $\Ttheta$ over $\ol{X}_b$.
\end{thm}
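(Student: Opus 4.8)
The plan is to read the fibre off from the gerbe structure of Theorem~\ref{gerbe Higgs bundles} and then translate its band into cameral language via Theorem~\ref{thm Jtheta=Ttheta} and the Donagi--Gaitsgory description of Theorem~\ref{thm DG cameral data}. By Theorem~\ref{gerbe Higgs bundles} the Hitchin map $\shit_{L,\theta}$ realises $\Higgsrt(G_\R)$ as a gerbe banded by $J_{\theta,L}$; restricting along the section $b\colon X\to\la\otimes L/W(\la)$, the fibre over $b$ becomes a gerbe over $X$ banded by $b^*J_{\theta,L}$. Since this gerbe is neutral --- the Hitchin--Kostant--Rallis section of \cite{HKR} (cf.\ Corollary~\ref{cor:local_gerbe} and Theorem~\ref{gerbe Higgs bundles}\eqref{thm Gtheta is neutral gerbe}) provides a global object --- the fibre is equivalent to the groupoid of $b^*J_{\theta,L}$-torsors on $X$.

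I would next identify the band explicitly. By Theorem~\ref{thm Jtheta=Ttheta}, globalised through Lemma~\ref{global band from local}, one has $J_{\theta,L}\cong\mc{T}_{\theta,L}$. Unwinding the defining presheaf \eqref{eq mcTtheta}, a $b^*\mc{T}_{\theta,L}$-torsor on $X$ is exactly a $W(\la)$-equivariant principal $T_\theta$-bundle on the cover $\ol{X}_b$ of \eqref{eq def cameral cover real} whose transition data obey the ramification condition \eqref{eq:cond_ram_Ttheta} at the $W(\la)$-fixed loci. This already furnishes an intrinsic ``real'' cameral description of the fibre, purely in terms of $T_\theta$-bundles over $\ol{X}_b$.

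To land inside $\mc{C}am$ I would use the functor induced by $\kappa$ of \eqref{eq kappa} together with the embedding of bands $\mc{T}_\theta\subset\iota^*\mc{T}$ from Propositions~\ref{prop:Ttheta_representable} and \ref{prop:Jtheta_embeds_in_J}. Since Proposition~\ref{prop:Ttheta_representable} realises $\mc{T}_\theta$ as the intersection of $\iota^*\mc{T}$ with the Weil restriction of $\la\times T_\theta$, a $\mc{T}_\theta$-torsor is precisely a $\mc{T}$-torsor equipped with a reduction of its structure group to $T_\theta$ along $\ol{X}_b$: extending a $T_\theta$-valued section by $W$-equivariance over all of $\hat{X}_b$ (as in the proof of Proposition~\ref{prop:Ttheta_representable}) recovers the ambient $\mc{T}$-torsor, while the reduction records the $T_\theta$-structure over the subcover. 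Under the isomorphism $\mc{T}\cong\mc{C}am$ of Theorem~\ref{thm DG cameral data}, a $\mc{T}$-torsor over $\iota\circ b$ is exactly a weakly $R$-twisted, $N$-shifted $W$-equivariant principal $T$-bundle on $\hat{X}_b$, and the $R$-twist and $N$-shift of the image are inherited verbatim from the ambient complex datum under $\kappa$. Combining the two identifications exhibits the fibre as the subcategory $\mc{C}am_\theta\subset\mc{C}am$ of those cameral data that admit a reduction to $T_\theta$ over $\ol{X}_b$.

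The step I expect to be the main obstacle is the third one: showing that the correspondence is an equivalence onto $\mc{C}am_\theta$, i.e.\ that the pointwise ramification condition \eqref{eq:cond_ram_Ttheta}, which forces $T_\theta$-valued sections to lie in $(T_\theta^w)^0$ at fixed points, is genuinely equivalent to the existence of a reduction of the $T$-bundle to $T_\theta$ over $\ol{X}_b$ compatible with the $N$-shift and $R$-twist. This is exactly where the $\Theta$-equivariance of the isomorphism $J\cong\mc{T}$ from Proposition~\ref{thm compare sheaves}\eqref{theta equivar J=T} is indispensable: it guarantees that a reduction imposed over the $\Theta$-fixed subcover $\ol{X}_b$ propagates $W$-equivariantly and consistently across the full cameral cover $\hat{X}_b$, so that no obstruction to extending the $T_\theta$-structure arises. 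The extension-by-equivariance and injectivity arguments already carried out in Proposition~\ref{prop:Ttheta_representable} then supply the remaining bookkeeping.
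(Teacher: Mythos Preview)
Your overall architecture matches the paper's: view $\Higgsrt(G_\R)$ as a $\mc{T}_{\theta,L}$-banded subgerbe of $\Higgsr(G)\cong\mc{C}am$ via $\kappa$, and identify its image with the subcategory $\mc{C}am_\theta$. However, the execution diverges at the crucial step, and the gap you flag in your last paragraph is real --- but your proposed fix does not close it.

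The issue is that matching bands is not enough to conclude that two subgerbes of $\mc{C}am$ coincide; you also need at least one common object. Your argument operates entirely at the level of automorphism sheaves: Proposition~\ref{prop:Ttheta_representable}, Proposition~\ref{prop:Jtheta_embeds_in_J}, and the $\Theta$-equivariance of $J\cong\mc{T}$ all describe how $\mc{T}_\theta$ sits inside $\iota^*\mc{T}$, i.e.\ how automorphisms of objects in the image of $\kappa$ sit inside automorphisms in $\mc{C}am$. They say nothing about whether the \emph{object} $(P,\gamma,\beta)$ attached to an $N_G(G_\R)$-Higgs bundle actually admits a $T_\theta$-reduction over $\ol{X}_b$. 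Your sentence ``a $\mc{T}_\theta$-torsor is precisely a $\mc{T}$-torsor equipped with a reduction'' conflates the band with the underlying twisted principal bundle: $\mc{C}am$ is a $\mc{T}_L$-gerbe, not literally $B\mc{T}_L$, so identifying subgroups of the band does not by itself pick out a subcategory of objects. (Relatedly, your appeal to neutrality via the Hitchin--Kostant--Rallis section is only available when $\deg L$ is even, cf.\ Theorem~\ref{gerbe Higgs bundles}\eqref{thm Gtheta is neutral gerbe}, whereas the theorem has no such hypothesis; but this detour is in any case unnecessary.)

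The paper closes the gap with a concrete geometric check that you are missing. Using Proposition~\ref{prop:alternative_cam_cover}, the cover $\ol{X}_b$ is \'etale-locally $\HC\to\HN$, and the variety of $\theta$-anisotropic Borels $\Gtheta/\Ttheta$ embeds in $G/B$ (Proposition~\ref{prop:var_min_par}). Over this embedding the universal $T$-bundle $G/U\to G/B$ pulls back to the extension of structure group of the $T_\theta$-bundle $\Gtheta\to\Gtheta/\Ttheta$. Since the universal cameral datum $\Puniv$ is pulled back from $G/U$, this exhibits an explicit $T_\theta$-reduction of the cameral datum of any object in $\Higgsrt(G_\R)$ over $\ol{X}_b$, proving $\kappa(\Higgsrt(G_\R))\cap\mc{C}am_\theta\neq\emptyset$. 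Only then does the band computation (which is essentially your argument, and the paper's last paragraph) finish the proof: two $\mc{T}_{\theta,L}$-banded subgerbes of $\mc{C}am$ with a common object must coincide.
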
 
\begin{proof}
	Since $\Higgsrt(G_{\R})$ is a subgerbe of $\Higgsr(G)$,  by Theorem \ref{thm DG cameral data} it is enough to identify the points of the stack of $\mc{C}am$ corresponding to $N(G_\R)$-Higgs bundles. In order to do this, by Theorem \ref{gerbe Higgs bundles} \eqref{thm Gtheta is gerbe} and Theorem \ref{thm Jtheta=Ttheta}, it is enough to prove that the subcategory $\mc{C}am_\theta$ is a $\mc{T}_{\theta,L}$ banded gerbe that intersects the image of $\Higgsrt(G_{\R})$ inside of $\Higgsr(G)$. 
	
	We will check non emptiness of $\mc{C}am_\theta$ and that the intersection with $\Higgsrt(G_{\R})$ is non empty at once, by directly checking that a cameral datum of an element in $\Higgsrt(G_{\R})$ satisfies the required condition.
	
	In order to do this, we need a different approach to the cameral cover $\ol{X}_b$, which is given by Proposition \ref{prop:alternative_cam_cover}. According to this, a cameral cover is, \'etale locally, the cover 
	$$
	\HN\to\HC,
	$$
	where $\HC$ parametrises regular centralisers and $\HN\subset \HC\times\Gtheta/\Ttheta$ is the incidence variety. We recall that by Proposition \ref{prop:var_min_par}  $\Gtheta/\Ttheta$ parametrises $\theta$-anisotropic Borel subgroups.
	
	Recall now \cite{DG} that locally, the cameral datum is constructed by pullback to $\GT$ of the torus bundle
	$$
	G/U\to G/B.
	$$
Over the subvariety $\Gtheta/\Ttheta$, we have the principal $\Ttheta$-bundle
$$
\Gtheta\to\Gtheta/\Ttheta.
$$
This fits in a commutative diagram
$$
\xymatrix{
	\Gtheta\ar@{^(->}[r]\ar[d]&G/U\ar[d]\\
\Gtheta/\Ttheta\ar@{^(->}[r]_{j}&G/B.	}
$$
Thus $\Gtheta(T)\cong j^*G/U$. Since the universal cameral datum is pullbacked from $G/U$, Proposition \ref{prop:alternative_cam_cover} allows us to conclude that $\Higgsrt(G_\R)\cap\mc{C}am_\theta\neq \emptyset$.
	
	Now, by local connectedness of $\mc{C}am$ it is enough to compute the automorphism sheaf of elements in $\mc{C}am_{\theta}$. These are clearly sections of $\mc{T}_L$ which over the smaller cameral cover reduce to automorphisms of the $T_\theta$-bundle, namely, sections of $\mc{T}_{\theta,L}$.
\end{proof}

 \subsection{Cameral data for fixed points under involutions}
 We have already observed (cf. Proposition \ref{prop local gerbe}) that a natural way to obtain candidates for $G_{\R}$-Higgs bundles in terms of cameral data is to understand the involution on $\mc{C}am$  induced from the involution on $\Higgsr(G)$ sending 
 \begin{equation}\label{eq:inv_Higgs}
 \Theta:(E,\phi)\mapsto (E\times_\theta G,-\theta(\phi)).
 \end{equation}

 We start by some Lie theoretic preliminaries.  Let $B<G$ be a $\theta$-anisotropic Borel subgroup (cf. Definition \ref{def:min_aniso}). Note that $B$ is associated to an ordering of the roots such that $\la^*>i\ld^*$.  
 
 The action of $-\theta$ on $\g$is induces an action:
\begin{align}
\label{eq:Theta_GB}
-\theta: G/B\longrightarrow G/B,&& \lb\mapsto\lb^{-\theta}\\
	-\theta: \GN\longrightarrow\GN,&& \lc\mapsto\lc^{-\theta}\\\label{eq:Theta_GT}
	-\theta: \GT\longrightarrow \GT,&& (\lc,\lb)\mapsto(\lc^{-\theta},\lb^{-\theta}),
\end{align}
which makes the universal cameral cover $\GT\to\GN$ $-\theta$ equivariant.
\begin{lm}\label{lm:fixed_pts_GN}
	We have $G/B^{-\theta}\cong \Gtheta/\Ttheta\cong H/\CH$. In particular:
$$\mr\times_{\HN}\GT^{-\theta}\cong \mr\times_{\HN}\HC.
$$
\end{lm}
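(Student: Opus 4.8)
The plan is to prove the three isomorphisms by an orbit--stabilizer argument on the flag variety $G/B$, and then to deduce the ``in particular'' clause by transporting it fibrewise along the incidence cover $\GT\to\GN$. First I would take as base point the chosen $\theta$-anisotropic Borel $\lb_0=eB\in G/B$ (cf. Definition \ref{def:min_aniso}), and record that it lies in $(G/B)^{-\theta}$ by the way the involution \eqref{eq:Theta_GB} is built together with the anisotropy condition $\theta(B)=B^{op}$. Since $H=G^\theta$ and, up to $Z(G)$, also $\Gtheta$ commute with this involution, both groups preserve the fixed locus $(G/B)^{-\theta}$; by Proposition \ref{prop:var_min_par} this locus is precisely the set of $\theta$-anisotropic Borels, which forms a single $\Gtheta$-orbit $\cong\Gtheta/\Ttheta$.

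The heart of the first part is then the stabilizer computation. If $g\in\Gtheta\cap B$, then $g^\theta=gz$ with $z\in Z(G)\subset T$ and $g^\theta\in\theta(B)=B^{op}$, so $g\in B^{op}z^{-1}=B^{op}$ and hence $g\in B\cap B^{op}=T$; this forces $\Gtheta\cap B=T\cap\Gtheta=\Ttheta$, recovering $(G/B)^{-\theta}\cong\Gtheta/\Ttheta$. Running the same argument with $z=e$ gives $H\cap B=T^\theta$, and quasi-splitness (QS1), which yields $\lc_\h(\la)=\ld$ and therefore $T^\theta=\CH$, identifies this stabilizer with $\CH=C_H(\la)$. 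Finally, $F\subset\Ttheta\subset B$ fixes $\lb_0$, so by $\Gtheta=FH$ (Lemma \ref{lm exact seq Gtheta}) the $\Gtheta$- and $H$-orbits of $\lb_0$ coincide; orbit--stabilizer then gives $\Gtheta/\Ttheta\cong H/\CH$, completing the chain $(G/B)^{-\theta}\cong\Gtheta/\Ttheta\cong H/\CH$.

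For the last assertion I would argue fibrewise over $\GN$. The involution \eqref{eq:Theta_GT} makes $\GT\to\GN$ equivariant, hence restricts to $\GT^{-\theta}\to\GN^{-\theta}$, whose fibre over a regular centraliser $\lc$ is the set of anisotropic Borels containing $\lc$, i.e. a copy of $(G/B)^{-\theta}$ cut out by the incidence condition. Applying the first part fibrewise identifies this fibre with the $W(\la)$-torsor that appears as the fibre of the analogous cover $\HC\to\HN$. It remains to match base points: for $x\in\mr$ the regular centraliser $\lc(x)$ is $\theta$-stable, so the classifying map $\mr\to\GN$ factors through $\GN^{-\theta}$ and agrees, under the identification of $\theta$-fixed regular centralisers with real regular centralisers $\HN\hookrightarrow\GN^{-\theta}$ (Appendix \ref{sec:reg_central}, Proposition \ref{prop:alternative_cam_cover}), with $\mr\to\HN$. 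Pulling the fibrewise identification back along $\mr\to\HN$ then yields $\mr\times_{\HN}\GT^{-\theta}\cong\mr\times_{\HN}\HC$.

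The main obstacle I anticipate is the precise identification $(G/B)^{-\theta}=\{\theta\text{-anisotropic Borels}\}$. The involution on $G/B$ induced by $\phi\mapsto-\theta\phi$ is \emph{not} literally conjugation by $\theta$ (which would instead fix the compact-type, $\theta$-stable Borels), but the twisted involution sending $\lb$ to $(\theta\lb)^{op}$, for which the anisotropic Borels rather than the $\theta$-stable ones are fixed; one checks this already in the model $\SL(2,\R)$, where $\theta$-conjugation fixes the two compact Borels while the twisted involution fixes the two anisotropic Borels containing the split torus. Getting this twist right and verifying that the fixed locus has no spurious components beyond the single anisotropic $\Gtheta$-orbit is the delicate Lie-theoretic point; once it is in place, the stabilizer computation and the fibrewise transport are routine, the only secondary subtlety being the scheme-theoretic matching $\HN\hookrightarrow\GN^{-\theta}$ at non-semisimple (ramification) points, where the compactified description of regular centralisers of Appendix \ref{sec:reg_central} is needed.
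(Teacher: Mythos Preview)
Your approach is essentially the same as the paper's, and your stabilizer computations $\Gtheta\cap B=\Ttheta$ and $H\cap B=T^\theta=\CH$ are correct and in fact more explicit than what the paper writes (the paper defers these to Proposition~\ref{prop:var_min_par}). The one point where the paper is more direct is exactly the ``main obstacle'' you flag: rather than leaving the identification $(G/B)^{-\theta}=\{\theta\text{-anisotropic Borels}\}$ as a delicate check, the paper observes that since $\theta$ sends $\g_\alpha$ to $\g_{\theta\alpha}$ and reverses the chosen order, the induced action on $G/B$ is literally $gB\mapsto g^\theta B$; then $gB$ is fixed iff $g^{-1}g^\theta\in B$, and one checks in two lines that this is equivalent to $g^\theta B^{op}\cap gB=gT$, i.e.\ to $(gBg^{-1})^\theta=(gBg^{-1})^{op}$. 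This coset description simultaneously handles the ``no spurious components'' worry you raise, since the fixed-point condition is visibly a single algebraic condition on cosets with no need to rule out extra orbits. Your fibrewise argument for the ``in particular'' clause is fine and matches the paper's one-line deduction from $\theta$-equivariance of $\gr\to\GN$ together with Proposition~\ref{prop:var_min_par}.
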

\begin{proof}
Note that the second statement is automatic from the first one, $\theta$-equivariance of the morphism $\gr\to\GN$ and Proposition \ref{prop:var_min_par}.

Now, the involution $\theta$ acts on the fixed set of roots by inverting the order. By expressing
	$$
	\lb=\lt\oplus\bigoplus_{\lambda\in \Lambda(\la)^+}\g_\lambda,
	$$
	we may identify the action of $-\theta$ on $G/B$ by
	\begin{equation}\label{eq:ThetaGB}
	-\theta:gB\mapsto g^\theta B.
	\end{equation}
Indeed, this follows from the fact that $\theta$ exchanges $\lt$ root spaces by $\g_\alpha\mapsto\g_{\theta\alpha}$. Thus fixed points of $G/B$ consist of Borel subalgebras corresponding to cosets $gB$ such that $g^{\theta}B=gB$, namely, such that $g^{-1}g^{\theta}\in B$. We need to check that this is equivalent to saying that $(gB)^{\theta}=(gB)^{op}$, i.e., that
$$
g^\theta B^\theta\cap gB=gT.
$$
Now, $B^\theta=B^{op}$, thus $g^\theta B^{op}\cap gB=gT\iff g^{-1}g^\theta B^{op}\cap B=T\iff g^{-1}g^\theta\in B$.
\end{proof}
\begin{rk}\label{rk:borel_and_op}
	From the above proof we deduce that $g^{-1}g^\theta B=B\iff g^{-1}g^\theta\in T$.
\end{rk}
\begin{lm}\label{lm:invol_on_covers}
	Let $(E,\phi)\in\Higgsr(G)$ have associated cameral cover $\hat{X}_b$ for some $b\in H^0(X,\lt\otimes L/W)$. Then, the associated cameral cover to $\Theta(E,\phi)$ is 
	$$
	\hat{X}_{-\theta(b)}= X\times_b({-\theta})^*\lt\otimes L,
	$$
	where $-\theta:\lt\otimes L/W\to \lt\otimes L/W$ is defined by $-\theta\pi(x):=\pi(-\theta x)$ for $x\in\lt\otimes L$, $\pi$ as in \eqref{eq def cameral cover}. In particular, if $b\in H^0(X,\laC\otimes L/W(\la))$ then we have an involutive isomorphism of $W$-covers
		$$
		\hat{X}_{b}\cong  X\times_b({-\theta})^*\lt\otimes L.
		$$
		Equivalently, if $(E,\sigma)$ is the associated abstract Higgs bundle associated with $(E,\phi)$,  with $\sigma:E\to\GN$ given by $\sigma(x,e)=\lc(\phi(x))$, then
		$$
		\hat{X}_{\sigma^{-\theta}}:= X\times_{\sigma^{-\theta}}\GT\cong {X}\times_{\sigma}(-\theta)^*\GT.
		$$
\end{lm}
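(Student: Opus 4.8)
The plan is to derive the whole statement from a single fact: the Hitchin map is equivariant for the involution $-\theta$. First I would check that $-\theta$ genuinely descends to an involution of $\lt\otimes L/W$ via $-\theta\,\pi(x):=\pi(-\theta x)$ as in the statement. Since $\lt$ is $\theta$-invariant and $\theta$ permutes the root spaces $\g_\alpha\mapsto\g_{\theta\alpha}$, the automorphism $\theta$ normalises $W$; the scalar $-1$ commutes with every $w\in W$ acting linearly on $\lt$ and with the $\C^\times$-scaling, so $-\theta$ also normalises $W$ and descends fibrewise to the twisted quotient. Next I would show that the Chevalley morphism \eqref{eq Chevalley r} satisfies $\chi\circ(-\theta)=(-\theta)\circ\chi$: writing the Jordan decomposition $x=x_s+x_n$, linearity of $-\theta$ and its commutation with $\theta$ give $(-\theta x)_s=-\theta x_s$, and if $x_s=\Ad(g)t$ with $t\in\lt$ then $-\theta x_s=\Ad(\theta g)(-\theta t)$ with $-\theta t\in\lt$, whence $\chi(-\theta x)=W(-\theta t)=(-\theta)\chi(x)$. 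Twisting by $L$ and restricting along $H\subset G$ upgrades this to $\shitc\circ\Theta=(-\theta)\circ\shitc$ on $\Higgsr(G)$, with $\Theta$ as in \eqref{eq Theta} and \eqref{eq:inv_Higgs}.

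Granting this, $b(\Theta(E,\phi))=\shitc(\Theta(E,\phi))=-\theta(b)$, so by the defining square \eqref{eq def cameral cover} the cameral cover of $\Theta(E,\phi)$ is $\hat{X}_{-\theta(b)}=X\times_{-\theta(b)}\lt\otimes L$. Reading $(-\theta)^*\lt\otimes L$ as the pullback of $\pi$ along $-\theta$ and unwinding the iterated fibre product gives the tautological identification $\hat{X}_{-\theta(b)}=X\times_b(-\theta)^*\lt\otimes L$, which is the main assertion. For the abstract reformulation I would use that the centraliser map is $(-\theta)$-equivariant, $\lc_\g(-\theta x)=(\lc_\g x)^{-\theta}$, so $\sigma_{-\theta\phi}=\sigma_\phi^{-\theta}$ and the abstract Higgs bundle of $\Theta(E,\phi)$ is $(E\times_\theta G,\sigma^{-\theta})$; since $\GT\to\GN$ is $-\theta$-equivariant by \eqref{eq:Theta_GT}, the cover $\hat{X}_{\sigma^{-\theta}}=X\times_{\sigma^{-\theta}}\GT$ descending from $(E\times_\theta G)\times_{\GN}\GT$ is isomorphic to $X\times_\sigma(-\theta)^*\GT$, which matches the previous description through Fact \ref{fact:springer_res}.

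For the particular case, if $b\in H^0(X,\laC\otimes L/W(\la))$ then $b$ factors through the embedding $\iota$ of \eqref{eq embedding GITs}. Because $-\theta$ is $+1$ on $\la$ and $-1$ on $\ld$, it fixes $\la\otimes L$ pointwise, so $\iota(\laC\otimes L/W(\la))\subseteq(\lt\otimes L/W)^{-\theta}$ (compatibly with $\laC/W(\la)=(\lt/W)^{-\theta}$ used in Proposition \ref{thm compare sheaves}), giving $-\theta(b)=b$. Hence $\hat{X}_{-\theta(b)}=\hat{X}_b$ and the map $(p,v)\mapsto(p,-\theta v)$ is a well-defined involution of $\hat{X}_b$ over $X$, compatible with the $\Theta$-twisted $W$-action; composing with the identification of the previous paragraph yields the asserted involutive isomorphism of $W$-covers $\hat{X}_b\cong X\times_b(-\theta)^*\lt\otimes L$.

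I expect the main obstacle to be the first step. Because $-\theta$ is \emph{not} a Lie algebra automorphism, one cannot invoke $G$-invariance of $\chi$ directly, and the equivariance must be argued through the Jordan decomposition together with $\theta$-invariance of $\lt$ and the fact that $-\theta$ normalises $W$; one must also verify compatibility with the $\C^\times$-action so that $-\theta$ really descends to the twisted base $\lt\otimes L/W$. Once this equivariance is secured, every remaining step is a formal manipulation of Cartesian squares.
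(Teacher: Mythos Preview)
Your proof is correct and follows essentially the same approach as the paper. The paper's proof is extremely terse---it argues that the two formulations (via $\lt\otimes L/W$ and via $\GN,\GT$) are equivalent through Fact~\ref{fact:springer_res} and Lemma~\ref{lm:fixed_pts_GN}, and then asserts that the first ``follows by definition''; you have unpacked precisely what that phrase means (well-definedness of $-\theta$ on the quotient, $(-\theta)$-equivariance of the Chevalley morphism, and the fibre-product identity $\hat{X}_{-\theta(b)}=X\times_b(-\theta)^*\lt\otimes L$), and you also verify the abstract reformulation directly rather than only by appeal to the equivalence. The particular case $b\in H^0(X,\laC\otimes L/W(\la))$ is handled exactly as one would expect, via $-\theta|_{\la}=\mathrm{id}$.
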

\begin{proof}
By Fact \ref{fact:springer_res}, cameral covers can be defined in either of these ways. By Lemma \ref{lm:fixed_pts_GN}, the choice of the involutions is compatible, and so both statements are equivalent, so it is enough to prove one of them, for instance, the first, which follows by definition.
\end{proof}
In a similar way, on the stack of cameral data $\mc{C}am$ over $\mc{A}_L(G)$ (see  Definition \ref{def:mcCD}), we consider the following involution: given an $R$-twisted, $N$-shifted $W$-equivariant principal $T$ -bundles $(P,\gamma,\beta)$ over $\widehat{S}_b$, we assign to it
\begin{equation}\label{eq:inv_CD}
\Theta:(P,\gamma,\beta)\mapsto ((-\tilde{\theta})_b^*P,(-\tilde{\theta})_b^*\gamma,(-\tilde{\theta})_b^*\beta).
\end{equation}
In the above, $(-\tilde{\theta})_b$ is locally given by universal version in the Cartesian diagram
\begin{equation}\label{eq:cartesian_camcov_pulbacked}
\xymatrix{
	(-\theta)^*\GT\ar[r]^{-\tilde{\theta}}\ar[d]&\GT\ar[d]\\
	\GN\ar[r]_{-\theta}&\GN	
}.
\end{equation}
\begin{thm}\label{thm:cam_data_fixed_pts}
 	The equivalence $\Higgsr(G)\cong \mc{C}am$ is $\Theta$-equivariant. In particular, the involution \eqref{eq:inv_CD} is well defined and induces an  equivalence
 	 $\Higgsr(G)^\Theta\cong \mc{C}am^{\Theta}$. We may describe
\begin{equation}
\label{eq:fixed_pts}
\mc{C}am^\Theta\cong\{(P,\gamma,\beta)\,:\, P\times_\theta T\cong \eta^*P\},
\end{equation} 	 
where $\eta:\hat{X}_b\to\hat{X_b}$ is the involution naturally induced from the isomorphism $(E,\phi)\cong\Theta(E,\phi)$.
 \end{thm}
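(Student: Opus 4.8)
The plan is to deduce everything from the $\Theta$-equivariance of the Donagi--Gaitsgory equivalence of Theorem \ref{thm DG cameral data}, and to establish that equivariance by tracking the two involutions through the universal construction recalled in the sketch of that theorem. Once the equivalence $\Higgsr(G)\cong\mc{C}am$ is shown to intertwine the involution \eqref{eq:inv_Higgs} on the left with the prescription \eqref{eq:inv_CD} on the right, two consequences are immediate: first, since $\Theta$ preserves $\Higgsr(G)$ and the equivalence is fully faithful, \eqref{eq:inv_CD} must land inside $\mc{C}am$, so the involution on cameral data is well defined; and second, an equivalence of stacks intertwining two involutions restricts to an equivalence of the associated fixed-point substacks, giving $\Higgsr(G)^\Theta\cong\mc{C}am^\Theta$. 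It then only remains to unwind what a fixed point of \eqref{eq:inv_CD} is.

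For the equivariance, I would proceed step by step along the construction $(E,\phi)\mapsto(P_E,\gamma_E,\beta_E)$. First, applying $\mc{C}$ from \eqref{eq:morph_pairs_abstract} to $\Theta(E,\phi)=(E\times_\theta G,-\theta(\phi))$ and using that $\lc_\g(-\theta(x))=\theta(\lc_\g(x))=(\lc_\g(x))^{-\theta}$, one checks that $\mc{C}\circ\Theta$ agrees with the involution $\sigma\mapsto\sigma^{-\theta}$ on abstract Higgs bundles determined by the maps \eqref{eq:Theta_GB}--\eqref{eq:Theta_GT}. By Lemma \ref{lm:invol_on_covers} this sends the cameral cover $\hat{X}_b$ to $\hat{X}_{-\theta(b)}$; for $b\in H^0(X,\la\otimes L/W(\la))$ the base is $\theta$-fixed and one obtains the declared involution $\eta$ of $\hat{X}_b$, realised locally as the pullback of the universal $-\tilde{\theta}$ of \eqref{eq:cartesian_camcov_pulbacked}.

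The heart of the argument is the behaviour of the universal cameral datum $(\Puniv,\guniv,\buniv)$ under $-\tilde{\theta}$. Writing the universal $T$-bundle as $\Puniv=\GT\times_{G/B}G/U$ and using that, by \eqref{eq:ThetaGB} and Lemma \ref{lm:fixed_pts_GN}, the involution on $G/B$ is $gB\mapsto g^\theta B$, the natural lift to $G/U$ is $gU\mapsto g^\theta U$. Because the right $T$-action satisfies $(gt)^\theta U=g^\theta U\cdot t^\theta$, this lift is $\theta$-semilinear for the $T$-structure, which is exactly the statement that $(-\tilde{\theta})^*\Puniv\cong\Puniv\times_\theta T$, where $\times_\theta$ twists the $T$-action by the automorphism $\theta$ of $T$. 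Pulling this isomorphism back to $\hat{X}_b$ gives $(-\tilde{\theta})^*_bP\cong(\eta^*P)\times_\theta T$. The parallel check for $\guniv$ and $\buniv$ is where I expect the main obstacle to lie: since $-\theta$ permutes the roots by $\alpha\mapsto\theta\alpha$, it permutes the ramification divisors $D_\alpha$ and the associated bundles $\mc{R}_{s_\alpha}=\check\alpha(\mc{O}(D_\alpha))$, and one must verify that the $N$-shift $\gamma$ and the simple-root isomorphisms $\beta_i$ match up under this permutation, keeping careful track of how the sign in $-\theta$ interacts with the coroots $\check\alpha$. This bookkeeping is purely Lie-theoretic and is governed by the fact, used already in Proposition \ref{thm compare sheaves}, that $B/[B,B]\cong T$ is $\theta$-equivariant; alternatively, since both sides are $\mc{T}_L$-gerbes on which $\Theta$ acts through the same involution of the band by Proposition \ref{thm compare sheaves}\eqref{descent theta T}, equivariance on morphisms propagates formally once it is checked on objects.

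Finally, I would unwind the fixed-point condition. A point of $\mc{C}am^\Theta$ is a triple $(P,\gamma,\beta)$ together with an isomorphism to its image $((-\tilde{\theta})^*_bP,\dots)$ under \eqref{eq:inv_CD}. By the computation above this image has underlying bundle $(\eta^*P)\times_\theta T$, so the condition reads $P\cong(\eta^*P)\times_\theta T$; twisting both sides by $\theta$ and using $\theta^2=\mathrm{id}$ turns this into the stated $P\times_\theta T\cong\eta^*P$, yielding the description \eqref{eq:fixed_pts}. The compatibility of the shift and ramification data on both sides is automatic once it has been established for the universal datum in the previous step.
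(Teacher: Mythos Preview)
Your proposal is correct and follows essentially the same route as the paper: both establish $\Theta$-equivariance by tracking the involution through the universal cameral datum, reduce to the key identity $(-\tilde\theta)^*\Puniv\cong\Puniv\times_\theta T$, and then unwind the fixed-point condition using Lemma~\ref{lm:invol_on_covers} and the diagram \eqref{eq:cartesian_camcov_pulbacked}. Your explicit semilinearity argument via $gU\mapsto g^\theta U$ is a bit more concrete than the paper's appeal to $(G\times W,G^\theta\times W^\theta)$-equivariance, and your honest flagging of the $\gamma,\beta$ bookkeeping matches the paper's equally brief treatment of that step; otherwise the two arguments are the same.
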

 
 \begin{proof}
 	In order to check $\Theta$-equivariance, note first that the involution is well defined. Indeed, it is enough to check this on the universal cameral datum \eqref{eq:universal_cameral_datum}. The fact that the principal $T$ bundle transforms in the stipulated way follows by definition. To see the way $\gamma$ and $\beta$ transform, we note that $-\tilde\theta$ is induced from $(-\theta,id)\laction \GN\times G/B$, and thus it is $W$-equivariant and exchanges the action of $g$ and $g^\theta$ on the first factor. Hence
 	$(-\tilde{\theta})^*\gamma$ induces an isomorphism
 	$$
 	\Theta(P)\cong (-\tilde\theta)^*\left(w^*P\times_{w}T\right)\otimes (-\tilde\theta)^*R_{w}=w^*(-\tilde{\theta})^*P\times_{w}T\otimes R_{w}.
 	$$
 	Finally, let $\beta:=(\beta_i)$ with $\beta_i(n_i):\alpha_i(P)|_{D_{\alpha_i}}\cong\mc{O}(D_{\alpha_i)})|_{D_{\alpha_i}}$ for all $n_i\in N$ a lift of the simple root $\alpha_i$. Then, $W$-equivariance (induced from $G$-equivariance on the $G/B$ factor of $\GT$) allows us to conclude.

 	 Now, since cameral data are fully determined by the universal version once an abstract cameral cover has been assigned,  
 	  equivariance is clear from Lemma \ref{lm:invol_on_covers}.

Let us now prove \eqref{eq:fixed_pts}. Note that the cameral cover associated to a fixed point can be obtained in two ways as illustrated by the following diagram:
\begin{equation}\label{eq:cartesian_cam_covs}
\xymatrix{
	\hat{X}_b\ar[rrrr]^{\tilde\sigma^\theta}\ar[dd]\ar[dr]_\eta&&&&\GT\ar[dd]\ar[rd]^{(-\theta,-\theta)}&
	\\
	&\hat{X}_b\ar[dd]\ar[rrrr]^{\tilde{\sigma}}&&&&\GT\ar[dd]\
	\\
	X\ar@{=}[dr]\ar[rr]^{-\theta\phi}&&E_\theta(\g)\otimes L\ar[dr]_{-\theta}\ar[rr]^{\lc}&&\GN\ar[dr]_{-\theta}&
	\\
	&X\ar[rr]_\phi&&E(\g)\otimes L\ar[rr]_{\lc}&&\GN.
	}
\end{equation}
The fact that the rightmost top arrow is given by $(-\theta,-\theta)$ follows from the fact that it is an isomorphism of universal cameral covers making the diagram commute.



Now, we note that $\Puniv=(-\theta,-\theta)^*\Puniv\times_\theta T$. Indeed, this is a consequence of $(G\times W,G^\theta\times W^\theta)$ equivariance of $(-\theta,-\theta)\laction \GN\times G/B$. Thus,  assume  $\Theta(E,\phi)\cong (E,\phi)$. Then, on the one hand, the cameral datum for $\Theta(E,\phi)$ is $\eta^*\tilde{\sigma}^{*}\Puniv$ (because it is a fixed point).  On the other hand, it is simply $\tilde\sigma^{\theta, *}\Puniv=\tilde{\sigma}(-\theta,-\theta)^*\Puniv\times_\theta T$ which by commutativity of the top square in  \eqref{eq:cartesian_cam_covs} equals
$\eta^*\tilde\sigma^{\theta,*}\Puniv\times_\theta T$. Hence the statement follows. 

\end{proof}
As a corollary we obtain some simple consequences:
\begin{cor}\label{cor:split}
	Let $\theta$ be the involution produced from the split real form. Then $\mc{C}am^\Theta$ are the order two points of $\mc{C}am$.
	\end{cor}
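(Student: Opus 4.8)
The plan is to obtain Corollary~\ref{cor:split} as a direct specialization of the general characterization of $\mc{C}am^\Theta$ in Theorem~\ref{thm:cam_data_fixed_pts}, the only real work being to evaluate the two involutions that appear in \eqref{eq:fixed_pts} for a split form. First I would record the Lie-theoretic simplifications. For a split form $\la=\lt$ is a Cartan subalgebra and $W(\la)=W$; moreover $\ld=\lt\cap\h=0$, so $\lt=\la\subset\m$. Since $\theta=+1$ on $\h$ and $-1$ on $\m$, this gives $\theta|_{\lt}=-\mathrm{id}$, hence $-\theta|_{\lt}=\mathrm{id}$. In particular $-\theta$ acts trivially on $\lt\otimes L/W$, so by Lemma~\ref{lm:invol_on_covers} we have $\hat X_{-\theta(b)}=\hat X_b$ and the induced involution $\eta$ on the cameral cover is the identity. (This is consistent with Remark~\ref{rk regular qsplit}, where $C_W(x)\subset W(\la)=W$ forces $\ol{X}_b=\hat X_b$.)

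Next I would compute the involution induced on the structure torus. As $T$ is $\theta$-invariant with $d\theta|_{\lt}=-\mathrm{id}$, the automorphism $\theta|_T$ acts as $-\mathrm{id}$ on the cocharacter lattice, i.e. $\theta|_T$ is the inversion $t\mapsto t^{-1}$. Consequently the change-of-structure-group bundle $P\times_\theta T$ is the inverse $T$-bundle, representing the class $-[P]$, and with $\eta=\mathrm{id}$ the fixed-point condition of \eqref{eq:fixed_pts} collapses to
$$
\mc{C}am^\Theta\cong\{(P,\gamma,\beta)\,:\,P^{-1}\cong P\}.
$$
Thus the whole datum $(\gamma,\beta)$ is carried along by the $\Theta$-equivariance of the equivalence $\Higgsr(G)\cong\mc{C}am$ already established in Theorem~\ref{thm:cam_data_fixed_pts}, and the only surviving constraint is $P\cong P^{-1}$, equivalently $2[P]=0$.

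It then remains to read this as the order-two condition on $\mc{C}am$. Since $\mc{C}am$ is a gerbe banded by the abelian group scheme $\mc{T}_L$ (Theorem~\ref{thm:T_gerbes_DG} and Theorem~\ref{thm DG cameral data}), its isomorphism classes form a torsor under the cohomology of the band, and the computation of the previous paragraph, applied to band sections $s\colon\hat X_b\to T$, shows that $\Theta$ acts on $\mc{T}_L$ by inversion. Fixing a $\Theta$-invariant base point turns the torsor into a group on which $\Theta$ acts as $x\mapsto-x$; such a base point is furnished by the Hitchin--Kostant--Rallis section of \cite{HKR}, which factors through $\Higgsr(G_\R)$ and is therefore $\Theta$-fixed (cf.\ Proposition~\ref{prop local gerbe}). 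Relative to this base point the equation $\Theta(x)=x$ becomes $2x=0$, so $\mc{C}am^\Theta$ is exactly the locus of order-two points, as claimed.

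The one genuinely delicate point, which I expect to be the main obstacle, is this last step: making the group structure on cameral data precise and checking that, with respect to the $\Theta$-fixed Hitchin--Kostant--Rallis base point, $\Theta$ acts by honest inversion. This is what guarantees that the fixed locus is exactly the $2$-torsion and not merely contained in it, and that the shift data $(\gamma,\beta)$ are transported compatibly. Everything preceding it is a routine substitution of $\theta|_{\lt}=-\mathrm{id}$ and $\theta|_T=(\,\cdot\,)^{-1}$ into Theorem~\ref{thm:cam_data_fixed_pts}.
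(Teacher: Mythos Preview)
Your argument is correct and follows the same route as the paper: for the split form $\la=\lt$, hence $-\theta|_{\lt}=\mathrm{id}$ forces $\eta=\mathrm{id}$, and $\theta|_T$ is inversion, so the fixed-point condition \eqref{eq:fixed_pts} collapses to $P\cong P^{-1}$. The paper's own proof is two lines and stops exactly there, citing Proposition~\ref{prop:alternative_cam_cover} for $\eta=\mathrm{id}$ rather than Lemma~\ref{lm:invol_on_covers}, and simply declaring that $P^{-1}\cong P$ \emph{is} the order-two condition.

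Your final paragraph, invoking the Hitchin--Kostant--Rallis section as a $\Theta$-fixed base point to turn the torsor into a group on which $\Theta$ acts by inversion, is extra work the paper does not do. It is not wrong, and it does make the phrase ``order-two points of $\mc{C}am$'' honest in the torsor sense; but in the paper the statement is evidently meant pointwise on cameral data (the $T$-bundle $P$ satisfies $P\cong P^{-1}$), where no base point is needed. So what you flag as the ``genuinely delicate point'' is a precision the authors did not intend to supply, not a gap in the argument.
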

	\begin{proof}
		Note that for split real forms one has $\la=\lt$, so that $\eta=id$ (by Proposition \ref{prop:alternative_cam_cover}) and $\theta$ is ${-1}$ on the split torus $T$. Hence, fixed points are principal bundles on $\hat{X}_b$ (for all $b$) such that
		$
		P^{-1}=P\times_\theta T=P.
		$	\end{proof}
\begin{prop}\label{prop:stable_fixed_points}
	Let $G$ be a simple Lie group. Let $$(E,\phi)\in\Higgsr(G)^\Theta$$ be a stable Higgs bundle. Then, $(E,\phi)\in\Higgsr(G_{\R})$. 
\end{prop}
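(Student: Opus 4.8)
The plan is to follow the strategy for involutions on moduli of Higgs bundles of García-Prada--Ramanan \cite{GRInvol}, exploiting that a stable object is simple. First I would record that since $(E,\phi)$ is stable it is simple, so $\Aut(E,\phi)=Z(G)$, and as $G$ is simple this is a finite central group, each automorphism acting as a constant central gauge transformation. The hypothesis $(E,\phi)\in\Higgsr(G)^\Theta$ supplies an isomorphism $f\colon(E,\phi)\xrightarrow{\sim}\Theta(E,\phi)=(E\times_\theta G,-\theta\phi)$; equivalently a $\theta$-semilinear automorphism of $E$ intertwining $\phi$ with $-\theta\phi$. Composing with its $\theta$-twist yields $c:=f^\theta\circ f\in\Aut(E,\phi)=Z(G)$, and $c$ is exactly the obstruction to upgrading $f$ to a genuine real structure on $E$.

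Next I would analyse $c$. A direct computation, using that $c$ is central and $f$ is $\theta$-semilinear, gives $\theta(c)=f\circ f^\theta=c$, so $c\in Z(G)^\theta$. The only freedom in $f$, by simplicity, is to rescale $f\mapsto\lambda f$ with $\lambda\in Z(G)$, which replaces $c$ by $\lambda^\theta\lambda\,c$; hence the genuine invariant is the class of $c$ in $Z(G)^\theta/\{\lambda^\theta\lambda\colon\lambda\in Z(G)\}$. When this class vanishes one can choose $f$ with $f^\theta\circ f=1$, i.e.\ an honest involution, which is precisely a reduction of the structure group of $E$ from $G$ to $H=G^\theta$, say $E=E_H\times_H G$. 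Under this reduction the compatibility $-\theta\phi=\phi$ says exactly that $\phi$ lies in the $(-1)$-eigenspace of $\theta$, namely $\phi\in H^0(X,E_H(\m)\otimes L)$, so $(E_H,\phi)$ is an $L$-twisted $G_\R$-Higgs bundle. Finally, everywhere-regularity of $(E,\phi)$ means $\phi(x)\in\gr$ for all $x$; together with $\phi(x)\in\m$ and the quasi-split identity $\mr=\m\cap\gr$ (Remark \ref{rk mr c gr}) this gives $\phi(x)\in\mr$, so $(E_H,\phi)\in\Higgsr(G_\R)$.

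I expect the vanishing of the obstruction class in $Z(G)^\theta/\{\lambda^\theta\lambda\}$ to be the crux, and this is where simplicity of $G$ and quasi-splitness should enter: for $G$ simple the finite group $Z(G)$ and the involution $\theta$ can be listed case by case, and I would show that the norm map $\lambda\mapsto\lambda^\theta\lambda$ is onto $Z(G)^\theta$, forcing $[c]=0$ automatically. Quasi-splitness is what makes this uniform: since $\Gtheta=FH$ with $F=\{a\in\exp(\la)\colon a^2\in Z(G)\}$ (Lemma \ref{lm exact seq Gtheta}), the maximally split torus already meets every relevant central coset, which is exactly what expresses the offending central class as a norm.

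Alternatively—and this is the route I would take if the direct central computation proves awkward—I would argue through cameral data. By Theorem \ref{thm:cam_data_fixed_pts} the fixed point corresponds to a datum $(P,\gamma,\beta)$ with $P\times_\theta T\cong\eta^*P$; regularity together with Theorem \ref{thm fibres hm for Gtheta} then forces a reduction of $P$ to $T_\theta$ over $\ol{X}_b$, placing $(E,\phi)$ in the essential image of $\Higgsrt(G_\R)=\stack{\mr\otimes L/\Gtheta}$, i.e.\ it is an $N_G(G_\R)$-Higgs bundle (Lemma \ref{lm:mGtheta_are_NGR}). The remaining step is to descend the reduction from $\Gtheta$ to $H$ across the extension $1\to H\to\Gtheta\to F^2\to0$ of Lemma \ref{lm exact seq Gtheta}; stability, via simplicity, trivialises the induced $F^2\subset Z(G)$-bundle, which is again the obstruction computation above. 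Either way the single genuine obstacle is the vanishing of a class valued in the finite central group $Z(G)$, and the role of the hypothesis that $G$ is simple is precisely to make that class vanish.
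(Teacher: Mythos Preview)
Your framework is the right one---this is exactly the Garc\'{\i}a-Prada--Ramanan setup of \cite{GRInvol}---but the decisive step is left unproved, and the specific claim you rely on is false. You assert that for $G$ simple and $\theta$ quasi-split the norm map $\lambda\mapsto\lambda^\theta\lambda$ is onto $Z(G)^\theta$, forcing $[c]=0$. Take $G=\SL(2,\C)$ with the split involution $\theta(g)=(g^T)^{-1}$: then $Z(G)=\{\pm I\}$, $\theta$ fixes it pointwise, and the norm is $\lambda\mapsto\lambda^2=I$, so the Tate cohomology group $Z(G)^\theta/\{\lambda^\theta\lambda\}$ is $\Z/2$, not trivial. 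Your appeal to $\Gtheta=FH$ does not rescue this: since $F\subset\exp(\la)$ and $\theta|_{\la}=-1$, every $a\in F$ satisfies $a^\theta=a^{-1}$ and hence $a^\theta a=1$, so $F$ contributes nothing to the image of the norm. The alternative cameral-data route has a parallel gap: the condition $P\times_\theta T\cong\eta^*P$ of Theorem~\ref{thm:cam_data_fixed_pts} does not by itself produce a reduction of $P$ to $T_\theta$ over $\ol X_b$; you would again need to kill an obstruction.

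The paper's argument is genuinely different and avoids this computation altogether. Stability gives simplicity, and then \cite[Proposition~7.5]{GRInvol} yields a reduction of $(E,\phi)$ to \emph{some} real form $G_\R'$ whose involution lies in the inner class $\mathrm{Int}(G)\cdot\theta$---with no claim that $G_\R'=G_\R$. The everywhere-regularity hypothesis is now used in an essential way, not just as a tag at the end: since $\phi$ takes values in $\m'\cap\gr$, the form $G_\R'$ must itself be quasi-split. Finally one invokes the classification of quasi-split forms of a simple group: there is at most one in each inner class (outer class gives the split form, inner class at most one other), so $G_\R'\cong G_\R$. In your language, regularity is what forces $[c]$ to land on the coset corresponding to $G_\R$; this is deduced from Lie-theoretic classification rather than from any vanishing of the obstruction group, which, as the $\SL(2)$ example shows, need not vanish.
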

\begin{proof}
	
	Let $(E,\phi)\in\Higgsr(G)^\Theta$ be a stable bundle. By stability , it is simple and by \cite[Proposition 7.5]{GRInvol} the structure group reduces to a real form with involution in the $\mathrm{Int}(G)$ orbit of $\theta$. By regularity, the form must be quasi-split. But there are at most two non-isomorphic quasi-split real forms, corresponding to an outer involution (the split real form, which always exists) and an inner involution (which may or may not exist). Since both cases are exclusive, the result follows.
	
\end{proof}

\appendix
\section{Lie theory}\label{sec:Lie_th}
In this section we summarize the main results about real reductive groups. The references are \cite{K, KR71, HKR}.

A reductive real Lie group $G_\R$ is a Lie group in the sense of \cite[\S VII.2, p.446]{K},  that is, a tuple
$(G_\R,H_\R,\theta,\langle\,\cdot\,,\,\cdot\,\rangle)$, where $H_\R \subset G_\R$ is a maximal compact subgroup, 
$\theta\colon \g_\R
\to \g_\R$ is a Cartan involution and $\langle\,\cdot\,,\,\cdot\,\rangle$ is a
non-degenerate bilinear form on $\g_\R$, which is $\Ad(G_\R)$-
and $\theta$-invariant, satisfying natural compatibility conditions. 
\begin{defi}\label{def:strongly_red}
	A \textbf{real reductive group} is a $4$-tuple $(G_\R,H_\R,\theta,\langle\,\cdot\,,\,\cdot\,\rangle)$ where
	\benum
	\item $G_\R$ is a real Lie group with reductive Lie algebra $\g_\R$.
	\item $H_\R<G_\R$ is a maximal compact subgroup.
	\item $\theta$ is a Lie algebra involution of $\g_\R$ inducing an eigenspace decomposition
	$$\g_\R=\h_\R\oplus\m_\R$$
	where $\h_\R=\mathrm{Lie}(H_\R)$ is the $(+1)$-eigenspace for the action of $\theta$, and $\m_\R$ is the 
	$(-1)$-eigenspace.
	\item\label{bilinear form} $\langle\,\cdot\,,\,\cdot\,\rangle$ is a $\theta$- and $\Ad(G_\R)$-invariant non-degenerate bilinear form,
	with
	respect to which $\h_\R\perp_{\langle\,\cdot\,,\,\cdot\,\rangle}\m_\R$
	and $\langle\,\cdot\,,\,\cdot\,\rangle$ is negative definite  on $\h_\R$ and positive definite on $\m_\R$.
	\item\label{diffeo} The multiplication map $H_\R\times \exp(\m_\R)\to G_\R$ is a diffeomorphism.
		\item\label{SR}  $G_\R$ acts by  inner automorphisms on the complexification $\g$ of its Lie algebra via the adjoint representation 
	\eenum
		\end{defi}
		 The group $H_\R$ acts linearly on $\m_\R$ through the adjoint
representation of $G_\R$ --- this is the isotropy representation that we
complexify to obtain a representation (also referred as isotropy representation)
\begin{math}
	\rho_i\colon H \to \GL(\m).    
\end{math} 
Let $\g:=\g_{\R}^\C$, and similarly for $\m$ and $\h$. Let $\mr$ denote the set of regular elements of $\m$, namely, elements with maximal isotropy orbits, and by $\mrs$ the set of regular semisimple elements.

 Let  $\lt=\ld\oplus\la$ be a $\theta$-invariant Cartan subalgebra of $\g$ (where $\theta$ denotes the extension by complex linarization of the Cartan involution of $\g_\R$), with  $\la=\lt\cap\m$ maximal and $\ld=\lt\cap\h$.
\begin{prop}{\cite[Theorem 1]{KR71}}\label{prop ss conjugate to a}
 Let $x\in\m$ be semisimple. Then $x$ is $H$-conjugate to an element of $\la$.
\end{prop}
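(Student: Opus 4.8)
The plan is to deduce the statement from the Chevalley restriction theorem together with the fact that semisimple elements of $\m$ have closed $H$-orbits. Recall that $H$ is reductive, being the complexification of the compact group $H_\R$ (equivalently, the fixed-point group $G^\theta$ of an involution of the reductive group $G$), so the affine GIT quotient $\m\sslash H$ is available, and by Chevalley restriction ($\C[\m]^H\cong\C[\la]^{W(\la)}$) the Chevalley morphism $\chi\colon\m\to\la/W(\la)$ realizes it. The first step is to produce a candidate element of $\la$: since the projection $\la\to\la/W(\la)$ is surjective, there is some $a\in\la$ with $\chi(a)=\chi(x)$.

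The second step is to observe that both $x$ and $a$ lie on closed orbits. Elements of $\la\subset\lt$ are semisimple, so $a$ is semisimple, and $x$ is semisimple by hypothesis; the key input is then that a semisimple element of $\m$ has closed $H$-orbit. Granting this, I would invoke the basic property of affine GIT quotients that each fibre of $\m\to\m\sslash H$ contains a unique closed orbit. Since $\chi(x)=\chi(a)$ and the orbits $H\cdot x$ and $H\cdot a$ are both closed, they must coincide with the unique closed orbit in that fibre, whence $x\in H\cdot a\subset H\cdot\la$, which is exactly the assertion.

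The main obstacle is the closed-orbit fact: a semisimple $x\in\m$ has closed $H$-orbit. To establish it I would use that semisimplicity of $x$ makes $\ad x$ a semisimple endomorphism, so the adjoint orbit $G\cdot x\subset\g$ is closed; then $(G\cdot x)\cap\m$ is a closed, $H$-invariant subvariety of $\m$, and a minimal-dimension argument (a closed invariant set always contains a closed orbit, namely one of minimal dimension) combined with the analysis of $(G\cdot x)\cap\m$ yields that $H\cdot x$ is itself closed. An alternative, more classical route avoids GIT altogether: first show that $x$ lies in a Cartan subspace $\la'$ of $\m$ --- using that $\lc_\g(x)$ is a $\theta$-stable reductive subalgebra containing $x$ in its centre, so that adjoining $x$ to a maximal abelian subspace of semisimple elements of $\lc_\m(x)$ produces such a $\la'$ --- and then prove that all Cartan subspaces are $H$-conjugate by a density and dimension count based on the regular semisimple locus $\mrs$. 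On this second route the conjugacy of Cartan subspaces is the crux, replacing the closed-orbit fact as the principal difficulty.
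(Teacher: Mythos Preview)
The paper does not supply a proof; the proposition is quoted directly from \cite[Theorem~1]{KR71}. So the relevant comparison is with Kostant--Rallis's original argument.

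Your second, ``classical'' route --- place $x$ in a Cartan subspace $\la'\subset\m$ using the $\theta$-stable reductive centraliser $\lc_\g(x)$, then prove that all Cartan subspaces of $\m$ are $H$-conjugate --- is essentially what Kostant--Rallis do, and is the correct line.

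Your first (GIT) route carries a genuine circularity risk. In the Kostant--Rallis development, the Chevalley restriction isomorphism $\C[\m]^H\cong\C[\la]^{W(\la)}$ is proved \emph{after} and \emph{using} Theorem~1: injectivity of the restriction map requires $H\cdot\la$ to be dense in $\m$, and this density is obtained precisely from the fact that every semisimple element lies in $H\cdot\la$. So you cannot take Chevalley restriction as a black-box input without supplying an independent proof of it. Separately, the closed-orbit step is incomplete as written: from $(G\cdot x)\cap\m$ being closed and $H$-invariant you only get that it \emph{contains} a closed $H$-orbit (one of minimal dimension), not that $H\cdot x$ is that orbit. The missing ingredient --- that $(G\cdot x)\cap\m$ is a finite union of $H$-orbits all of the same dimension, hence each closed --- is itself a substantial theorem of \cite{KR71} sitting at the same depth as the proposition you are proving. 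So the GIT argument, while morally sound once the whole theory is in place, does not actually shortcut the work of the classical approach.
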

The above proposition fails to be true for non semisimple elements of $\m$. 

\begin{prop}\label{prop orbits in laW}\cite[Theorem 9]{KR71}
	The space $\la/W(\la)$ classifies $(\Ad(G))^\theta$ orbits in $\mr$. 
\end{prop}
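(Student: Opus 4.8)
The plan is to exhibit the Chevalley morphism $\chi\colon\m\to\la/W(\la)$ of \eqref{eq Chevalley r}, restricted to the regular locus $\mr$, as descending to a bijection between $(\Ad G)^\theta$-orbits in $\mr$ and points of $\la/W(\la)$. The first step I would take is to replace $(\Ad G)^\theta$ by the more tractable group $\Gtheta$ of Definition \ref{defi Gtheta}. Indeed, lifting any $\theta$-fixed class $\bar g\in(\Ad G)^\theta$ to an element $g\in G$ yields $g^{-1}g^\theta\in Z(G)$, so that $g\in\Gtheta$; hence $\Gtheta$ surjects onto $(\Ad G)^\theta$, with kernel $Z(G)$. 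Since $Z(G)$ acts trivially on $\g$ through $\Ad$, the groups $\Gtheta$ and $(\Ad G)^\theta$ have exactly the same orbits on $\m$, and it suffices to prove that $\chi|_{\mr}$ both separates and exhausts the $\Gtheta$-orbits.

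For well-definedness and surjectivity: the map $\chi|_{\mr}$ is $\Gtheta$-invariant, since the local Hitchin map $\stack{\m/\Gtheta}\to\la/W(\la)$ of \eqref{eq lshittheta} is obtained precisely by $\Gtheta$-equivariance of \eqref{eq Chevalley r} (see also Proposition \ref{prop Ctheta descends} and Corollary \ref{cor:local_gerbe}), so $\chi|_{\mr}$ descends to a well-defined map out of the orbit set. To see it is surjective, I would invoke the Kostant--Rallis section of \cite[Theorem 11]{KR71}, which by Remark \ref{rk:HKR_factors_through_real} factors through $\mr$: it furnishes a set-theoretic section of $\chi|_{\mr}$, so that every point of $\la/W(\la)$ is hit by a regular element.

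Injectivity, i.e.\ orbit separation, is the core of the argument. Given $x,y\in\mr$ with $\chi(x)=\chi(y)$, I would push forward along the embedding $\iota\colon\la/W(\la)\plonge\lt/W$ of \eqref{eq embedding GITs} (Lemma \ref{lm compare orbits}, part 2), so that $x$ and $y$ have a common image in $\lt/W$. Quasi-splitness provides $\mr\subset\gr$ (Remark \ref{rk mr c gr}), and Kostant's theorem \cite[Theorem 2]{Kos} asserts that $\lt/W$ classifies $G$-orbits on the whole regular locus $\gr$, crucially including the non-semisimple regular elements. Consequently $x$ and $y$ are $G$-conjugate, and Lemma \ref{lm compare orbits}, part 3, upgrades this to $\Gtheta$-conjugacy. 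Together with the previous paragraph, this shows $\chi|_{\mr}$ induces the asserted bijection between $\Gtheta$-orbits (equivalently $(\Ad G)^\theta$-orbits) and $\la/W(\la)$.

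The main obstacle I anticipate lies in this last step for \emph{non-semisimple} regular elements: for regular semisimple $x,y$ one already gets $H$-conjugacy directly from Proposition \ref{prop ss conjugate to a}, whereas the general case genuinely requires that $\lt/W$ separate regular (not merely semisimple) $G$-orbits, which is exactly where the quasi-split hypothesis $\mr\subset\gr$ and the enlargement of $H$ to $\Gtheta$ become indispensable. By contrast, the reduction from $(\Ad G)^\theta$ to $\Gtheta$ and the $\Gtheta$-invariance of $\chi$ are purely formal and should cost no real work.
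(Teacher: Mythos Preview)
The paper does not supply a proof of this proposition at all: it is simply quoted from Kostant--Rallis \cite[Theorem 9]{KR71}, so there is nothing to compare on that front. Your argument, however, is not self-contained and in fact circular as written.

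The issue is your appeal to Lemma~\ref{lm compare orbits}, part~3, for the injectivity step. If you look at how that lemma is established in the paper, its proof reads: ``Follows from \textit{2.}\ above and \cite[Theorem 7]{KR71}, which implies that $\la/W(\la)$ parametrizes $\Gtheta$-orbits\ldots''. In other words, the very fact you are trying to prove is invoked to justify the lemma you are using. Nor is Lemma~\ref{lm compare orbits}(3) a triviality one can supply on the spot: if $g\cdot x=y$ with $x,y\in\mr$, one finds $g^{-1}g^\theta\in C_G(x)$, but upgrading this to $g^{-1}g^\theta\in Z(G)$ (i.e.\ $g\in\Gtheta$) amounts to a vanishing statement for $H^1(\Z/2,C_G(x))$ which is not automatic and is precisely what the orbit classification encodes.

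A clean fix is already implicit in what you wrote for surjectivity. Proposition~\ref{prop KR} (i.e.\ \cite[Theorem~11]{KR71}) asserts that the Kostant--Rallis section $s\colon\la/W(\la)\to\mr$ meets each $\Gtheta$-orbit in \emph{exactly one} point. This single statement gives both directions at once: surjectivity because $\chi\circ s=\mathrm{id}$, and injectivity because if $\chi(x)=\chi(y)$ then $s(\chi(x))$ lies in both $\Gtheta\cdot x$ and $\Gtheta\cdot y$, forcing the orbits to coincide. Your reduction from $(\Ad G)^\theta$ to $\Gtheta$ is correct and can be kept; the detour through $\lt/W$, Kostant's theorem, and Lemma~\ref{lm compare orbits}(3) should be replaced by this direct appeal to Proposition~\ref{prop KR}.
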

\begin{prop}\label{prop KR}
	There exists a section of $\mr\to \la/W(\la)$ intersecting each $G_\theta$-orbit at exactly one point.
\end{prop}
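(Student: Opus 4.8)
The plan is to take the desired section to be the Kostant--Rallis section and then deduce the transversality to $\Gtheta$-orbits from the orbit classification already available in this section.

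First I would recall the Kostant--Rallis section $s_{KR}\colon\la/W(\la)\to\m$ of the Chevalley morphism \eqref{eq Chevalley r}, constructed in \cite[Theorem 11]{KR71} for adjoint groups and extended to strongly reductive real groups in \cite[Theorem 4.6]{HKR} (this is exactly the section used in Corollary \ref{cor:local_gerbe}). By Remark \ref{rk:HKR_factors_through_real} its image lies in the regular locus $\mr$, so $s_{KR}$ is in fact a section of the restriction $\chi|_{\mr}\colon\mr\to\la/W(\la)$, which is the map appearing in the statement. This settles the existence half of the claim, and reduces the problem to showing that the image of $s_{KR}$ meets each $\Gtheta$-orbit exactly once.

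The key is to identify the fibres of $\chi|_{\mr}$ with single $\Gtheta$-orbits. The main input is Proposition \ref{prop orbits in laW}, which asserts that $\la/W(\la)$ classifies $(\Ad G)^\theta$-orbits in $\mr$. To rephrase this in terms of $\Gtheta$ I would note that $Z(G)$ acts trivially on $\m\subset\g$ through the adjoint representation, and that $\Ad(\Gtheta)=(\Ad G)^\theta$: for $g\in G$ the induced involution on $\Ad(G)$ sends $\Ad(g)\mapsto\Ad(g^\theta)$, so $\Ad(g)$ is $\theta$-fixed precisely when $g^{-1}g^\theta\in Z(G)$, i.e. when $g\in\Gtheta$ in the sense of Definition \ref{defi Gtheta}. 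Consequently the $(\Ad G)^\theta$-orbits and the $\Gtheta$-orbits in $\mr$ coincide, so the fibres of $\chi|_{\mr}$ are exactly the $\Gtheta$-orbits. (Alternatively, this identification can be obtained directly from Lemma \ref{lm compare orbits}: since $\mr=\m\cap\gr$ by quasi-splitness, two regular elements with equal Chevalley invariant have equal image in $\lt/W$ under the embedding \eqref{eq embedding GITs}, hence are $G$-conjugate by \cite[Theorem 2]{Kos}, and therefore $\Gtheta$-conjugate by part $3$ of that lemma; conversely $\Gtheta$-conjugate elements share their invariant because $\chi$ is $\Gtheta$-invariant.)

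Combining the two steps finishes the argument: $s_{KR}$ is a section of $\chi|_{\mr}$, each fibre of $\chi|_{\mr}$ is a single $\Gtheta$-orbit, and the orbit-to-$\la/W(\la)$ correspondence is a bijection, so $s_{KR}$ selects exactly one representative in every $\Gtheta$-orbit of $\mr$. I expect the only genuinely delicate point to be the bookkeeping identity $\Ad(\Gtheta)=(\Ad G)^\theta$ together with the triviality of the $Z(G)$-action, since this is what bridges the orbit statement of \cite{KR71} with the group $\Gtheta$ used throughout this section; the rest is a direct appeal to cited results.
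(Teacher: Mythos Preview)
Your proposal is correct and takes essentially the same approach as the paper: both invoke \cite[Theorem 11]{KR71} for the adjoint case and \cite[Theorem 4.6]{HKR} for the general strongly reductive case to produce the Kostant--Rallis section. The paper's proof is just these two citations, whereas you additionally spell out why the section meets each $\Gtheta$-orbit exactly once via the orbit classification of Proposition \ref{prop orbits in laW} together with the identification $\Ad(\Gtheta)=(\Ad G)^\theta$; this extra detail is sound and not circular, though the forward references to Corollary \ref{cor:local_gerbe} and Remark \ref{rk:HKR_factors_through_real} from the appendix are stylistically unnecessary since the regularity of the image is already part of the cited theorems.
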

\begin{proof}
	This is Theorem 11 in \cite{KR71} (adjoint group case) and a consequence of the former
	and Theorem 4.6 in \cite{HKR} for the general case.
\end{proof}

We include a here more careful study of $N_{G}(G_\R)$.
\begin{lm}\label{lm KA_uK}
Let $(G_{\R},H,\theta,B)<(G,U,\tau,B)$ be a real form of a complex reductive Lie group. Let $A_u=e^{i\la}\subset U$.
Then, we have a short exact sequence
$$
 1\to H^0A_uH^0\to U\to \pi_0(U)\to 1
$$
where $H^0$ denotes the connected component of $H$ and $\pi_0(U)$ is the group of connected components of $U$.
\end{lm}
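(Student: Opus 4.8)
The plan is to recognise $(U,H)$ as a compact symmetric pair and to deduce the displayed sequence from the polar ($KAK$) decomposition of the associated compact symmetric space; the only genuine work is the bookkeeping of connected components.

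First I would reduce to proving $U^0=H^0A_uH^0$, where $U^0$ denotes the identity component. Since $U^0=\Ker(U\to\pi_0(U))$ is a subgroup, the sequence $1\to H^0A_uH^0\to U\to\pi_0(U)\to1$ is exact precisely when the subset $H^0A_uH^0$ coincides with $U^0$ (in particular it is then automatically a subgroup). Here $H=U^\theta$ is the maximal compact subgroup of $G_\R$, so that $H^0\subset U$ and the product $H^0A_uH^0$ does lie in $U$.

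Next I would record the symmetric-pair structure. With $\theta=\sigma\tau$ as in Remark \ref{rk:complex_red_is_red}, the involution $\theta$ preserves $U=G^\tau$ and restricts there to $\sigma|_U$, so that $H=U^\theta=U\cap G_\R$ is the fixed-point group of an involution of $U$. On $\u=\Lie(U)=\h_\R\oplus i\m_\R$ the map $\theta$ is $+1$ on $\h_\R$ and $-1$ on $i\m_\R$; hence its $(-1)$-eigenspace is $\mathfrak p:=i\m_\R$, and $A_u=\exp(i\la_\R)$ is the torus whose Lie algebra $i\la_\R$ (with $\la_\R=\la\cap\g_\R$) is a maximal abelian subspace of $\mathfrak p$. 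I would also note the identity $\big((U^0)^\theta\big)^0=H^0$, which is immediate from $H^0=(U^\theta)^0$ together with the inclusions $(U^\theta)^0\subset(U^0)^\theta\subset U^\theta$; this lets me work inside the connected compact group $U^0$.

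Finally I would apply the two classical facts about the connected compact symmetric space $M:=U^0/H^0$ (see Knapp \cite{K}): namely (a) every element of $\mathfrak p$ is $\Ad(H^0)$-conjugate into $i\la_\R$, so $\mathfrak p=\Ad(H^0)(i\la_\R)$ (conjugacy of maximal abelian subspaces, the compact analogue of Proposition \ref{prop ss conjugate to a}); and (b) the Riemannian exponential $\exp_o\colon\mathfrak p\to M$ at the base point $o=eH^0$ is surjective, since $M$ is compact and hence complete, with geodesics through $o$ of the form $t\mapsto\exp(tX)o$. Surjectivity in (b) gives $U^0=\exp(\mathfrak p)H^0$, while (a) gives $\exp(\mathfrak p)=\bigcup_{h\in H^0}hA_uh^{-1}\subset H^0A_uH^0$; combining the two yields $U^0=H^0A_uH^0$, as required. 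The step I expect to be most delicate is not the geometry but the component bookkeeping: one must ensure that the conjugation in (a) can already be achieved within the identity component $H^0$ (and not merely within $H$), and that the base-point stabiliser in (b) is exactly $H^0$. These two points are what legitimise the two occurrences of $H^0$ in $H^0A_uH^0$; granting them, the remainder is the standard polar decomposition of a compact symmetric space.
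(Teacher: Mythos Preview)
Your proof is correct and follows essentially the same approach as the paper: both reduce the statement to $U^0=H^0A_uH^0$ and invoke the conjugacy $\m=\bigcup_{h\in H^0}\Ad(h)\la$ (Knapp, Proposition~7.29), which is exactly your point~(a). Your surjectivity-of-the-exponential argument on the compact symmetric space $U^0/H^0$ supplies explicitly the global step that the paper compresses into the phrase ``both subgroups have the same Lie algebra''.
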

\begin{proof} To see that $H^0A_uH^0=U^0$, we note that both subgroups have the same Lie algebra, as
$\lu=\h\oplus  i\m$ and $\m=\cup_{h\in H^0} \Ad(h)\la$ by Proposition 7.29 in \cite{K}, and clearly $H^0A_uH^0\subseteq U^0$.
\end{proof}
 

\section{The geometry of regular centralisers}\label{sec:reg_central}
{In this section we explain the main features about regular centraliser schemes for real groups. This yields to a natural description of cameral covers in terms of anisotropic Borel subgroups (cf. Proposition \ref{prop:alternative_cam_cover}).}

 Let  $\g_\R$ be a real reductive Lie subalgebra with complexification $\g$. Let $\theta$ denote the Cartan subalgebra of $\g_\R$ or its extension by complex linearization.  Let $\lt=\ld\oplus\la\subset \g$ be a $\theta$-invariant Cartan subalgebra of $\g$, with  $\la=\lt\cap\m$ maximal and $\ld=\lt\cap\h$. We let $S\subset \Delta=\Delta(\g,\lt)$ be the associated sets of (simple) roots, $W$ the Weyl group. Similarly, we can define $\Sigma(\la)$ the set of restricted roots associated to $\la$ (which is the image of the restriction map $res|_{\la}:\Delta\to\la^*$). This is a root system (possibly non reduced). Reflection with respect to simple such roots generates a group $W(\la)$ called the restricted Weyl group. This group is also isomorphic to
 $N_H(\la)/C_H(\la)$ and $N_{H_\R}(\la_{\R})/C_{H_\R}(\la_{\R})$, where $\la_\R=\la\cap\g_\R$. Moreover, any root $\alpha$ can be expressed as $\alpha=\lambda+i\beta$ with $\lambda\in\la_\R^*, \beta\in \ld_\R^*$.  
 
 Let $a=\dim\la$. Denote by $Ab^a(\m)$ the closed subvariety of $Gr(a,\m)$ whose points are abelian subalgebras
of $\m$. Define the incidence variety
\begin{equation}\label{eq mureg}
 \mu_{reg}=\{(x,\lc)\in \mr\times Ab^a(\m)\ :\ x\in\lc\}.
\end{equation}
We have the following.
\begin{prop}\label{basis}
The map 
\[
\psi: \mr\to Ab^a(\m)\ x\mapsto\lie{z}_{\m}(x)
\]
is smooth with smooth image and its graph is $\mu_{reg}$.
\end{prop}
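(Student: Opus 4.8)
The plan is to reduce everything to a single constant-rank computation for the differential of $\psi$, after disposing of well-definedness and the graph identification, which are formal. First I would check that $\psi$ lands in $Ab^a(\m)$. Since $G_\R$ is quasi-split, Remark \ref{rk mr c gr} gives $\mr=\m\cap\gr$, so every $x\in\mr$ is regular in $\g$ and hence $\lc_\g(x)$ is abelian (the centraliser of a regular element is abelian, exactly as used in the proof of Lemma \ref{C is smooth}). As $\lc_\m(x)=\lc_\g(x)\cap\m$ is a subspace of the abelian algebra $\lc_\g(x)$, it is an abelian subalgebra of $\m$, and $\dim\lc_\m(x)=\dim\la=a$ again by Remark \ref{rk mr c gr}; thus $\psi(x)\in Ab^a(\m)$.

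Next I would identify the graph of $\psi$ with $\mu_{reg}$, which also records that $\mu_{reg}$ is smooth. The inclusion of the graph in $\mu_{reg}$ is clear, since $[x,x]=0$ gives $x\in\lc_\m(x)$. Conversely, if $(x,\lc)\in\mu_{reg}$ then $\lc$ is abelian and contains $x$, so $\lc\subseteq\lc_\m(x)$, and equality follows from $\dim\lc=a=\dim\lc_\m(x)$. Hence the first projection $p_1\colon\mu_{reg}\to\mr$ is an isomorphism with inverse $x\mapsto(x,\psi(x))$, so $\mu_{reg}$ is smooth and $\psi=p_2\circ p_1^{-1}$. It remains to prove that $\psi$ is a smooth morphism with smooth image.

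For this I would view $\psi$ as the classifying morphism of the kernel subbundle of the bundle map $A(x)=\ad(x)|_\m\colon\m\to\h$ over $\mr$; its rank is constant because $\dim\ker A(x)=\dim\lc_\m(x)=a$ for all $x\in\mr$, so $\psi$ is a genuine morphism. With $T_x\mr=\m$ and $T_{\lc_\m(x)}Gr(a,\m)=\Hom(\lc_\m(x),\m/\lc_\m(x))$, differentiating a local kernel section shows that $d\psi_x(v)$ is the homomorphism sending $u\in\lc_\m(x)$ to the class of any $w\in\m$ with $A(x)w=-[v,u]$ (such $w$ exists precisely because the kernel is a subbundle, and its class modulo $\lc_\m(x)$ is well defined). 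Reading off the kernel, if $d\psi_x(v)=0$ then for each $u$ one may take $w\in\lc_\m(x)=\ker A(x)$, whence $[v,u]=-A(x)w=0$; therefore $\ker d\psi_x=\lc_\m(\lc_\m(x))=\lc_\m(x)$, the last equality because $\lc_\m(x)$ is abelian and contains $x$. Thus $d\psi_x$ has constant rank $\dim\m-a$ over the smooth variety $\mr$, and the constant-rank theorem in characteristic zero yields that the image $\psi(\mr)$ is a smooth locally closed subvariety of $Ab^a(\m)$ and that $\psi$ is a smooth submersion onto it, with fibres the open subsets $\lc\cap\mr$ of the affine spaces $\lc\in\psi(\mr)$. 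The main obstacle is this differential computation, specifically the identification $\ker d\psi_x=\lc_\m(x)$ giving constant rank; everything afterward is formal, and the whole argument parallels the complex case \cite[Proposition 11.2]{DG}.
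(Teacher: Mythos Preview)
Your well-definedness argument and your identification of $\mu_{reg}$ with the graph of $\psi$ are correct, and in fact cleaner than the paper's: the paper computes the tangent space of the fibre of $p_1$ to show it is a reduced point, whereas your direct inclusion $\lc\subset\lc_\m(x)$ plus dimension count already gives a set-theoretic bijection, and since $\mu_{reg}$ is reduced and $(id,\psi)$ is a morphism, this suffices.

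The gap is in the final smoothness step. Constant rank of $d\psi_x$ as a map into $T_{\psi(x)}Gr(a,\m)$ does \emph{not} imply that the image $\psi(\mr)$ is a smooth subvariety, nor that $\psi$ is smooth as a morphism to $Ab^a(\m)$. A concrete counterexample to the principle you invoke is the normalisation of the nodal cubic, $t\mapsto(t^2-1,t^3-t)$: the differential has constant rank $1$ on all of $\mathbb{A}^1$, yet the image is singular at the node. What your computation gives is that $\mathrm{Im}(d\psi_x)\subset T_{\psi(x)}Ab^a(\m)$ has dimension $\dim\m-a$; what is missing is the \emph{upper} bound $\dim T_{\psi(x)}Ab^a(\m)\le\dim\m-a$, without which $\psi(x)$ could be a singular point of $Ab^a(\m)$ and $\psi$ would not be smooth there.

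The paper supplies precisely this missing half via the evaluation map $ev_x\colon T_{\psi(x)}Ab^a(\m)\to\m/\lie{z}_\m(x)$, $T\mapsto T(x)$, and shows it is injective: if $T(x)=0$, then the derivation condition defining $T_{\psi(x)}Ab^a(\m)$ (namely $[T(y),z]+[y,T(z)]=0$ for $y,z\in\lie{z}_\m(x)$) applied with $z=x$ gives $[T(y),x]=0$, so any lift of $T(y)$ lies in $\lie{z}_\m(x)$ and $T=0$. Together with your kernel computation (equivalently, with $ev_x\circ d\psi_x$ being the canonical projection $\m\to\m/\lie{z}_\m(x)$, which is surjective), one concludes that $ev_x$ is an isomorphism, so $Ab^a(\m)$ is smooth at $\psi(x)$ and $d\psi_x$ surjects onto its Zariski tangent space. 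Your computation of $\ker d\psi_x=\lc_\m(x)$ is thus the right half of the argument; the injectivity of $ev_x$ is the other half, and it cannot be replaced by a bare appeal to constant rank.
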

\begin{proof}
First of all, note that the map is well defined: indeed, it is clear for regular and semsimimple elements in
$\m$. By Theorem 20 and Lemma 21 of \cite{KR71}, it extends to the whole of $\mr$.
As for smoothness, the proof of \cite[Proposition 1.3]{DG} adapts as follows.
\\We check that $\psi$ is well defined and has graph $\mu_{reg}$ by proving that $\mu_{reg}\to\mr$ is an embedding (hence, by properness  and surjectivity, an isomorphism).
To see this, as $\m_{reg}$ is reduced and irreducible (being a dense open set of a vector space), if the fibres are reduced points we will be done.
We have that
\[
T_{(x,\lie{b}_{ \m})}(\mu_{reg}\cap\{x\}\times Ab^a(\m))\cong\]
\[
\left\{T:\lie{b}\to\m/{\lie{b}}\left|\begin{array}{l} [T(y),x]=0\mbox{ for any }y\in\lie{b}\\
                                              T[y,z]=[Ty,z]+[y,Tz]\mbox{ for any }y,z\in\lie{b}
                                                \end{array}\right.
\right\}.
\]
By definition, the only $T$ satisfying those conditions is $T\equiv 0$, 
so the map is well defined. 
\\\\For smoothness, given a closed point $x\in\mr$, as $\mr\subset\m$ is open and dense, it follows that $T_x\mr\cong\m$. Consider
\[
\xymatrix{
T_x\mr\cong\m\ar[r]^{d_x\psi}&T_{\lie{z}_{\m}}Ab^a(\m)\ar[r]^{ev_x}&\m/\lie{z}_{\m}(x)\\
y\ar[r]&\{T:[T(z),x]=[-z,y]\}\ar[r]&T(x)=[y].
}
\]
Namely, $d_x\psi$ sends $y$ to the only map satisfying $[T(z),x]=[-z,y]$. Now, clearly $ev_x\circ d_x\psi$ is the projection map 
$\m\to\m/\lie{z}_{\m}(x)$. Also, $ev_x$ is surjective. We will prove it is injective, so it will follow that $Im(\psi)$ is contained in
 the smooth locus of $Ab^a(\m)$. The same fact proves that $d_x\psi$ must be surjective, and so we will be done.
\\Suppose $T(x)=T'(x)$ for some $T,\ T'\in T_{\lie{z}_{\m}(x)}Ab^a(\m)$. Then:
\[
0=[T(x)-T'(x),y]=[-x,T(y)]-[-x,T'(y)]=[-x,T-T'(y)] 
\]
for all $y\in\z_\m(x)$,  and hence $ev$ is injective.
\end{proof}
\begin{defi}\label{def:HN}
 We will call the image of $\psi$ the variety of regular centralisers, and denote it by $\ol{H/\NH}$.
\end{defi}
\begin{rk}\label{rk:open_HN}
$H/\NH\subset \ol{H/\NH}$ is an open subvariety consisting of the image of $\mrs$. This coincides with $\Gtheta/\Ntheta$, where $\Ntheta$ denotes the normaliser of $\laC$ in $\Gtheta$, since the latter group normalizes $\mr$ and semisimple orbits are the same for $H$ and $\Gtheta$.
\end{rk}
\begin{rk}
Note that $\psi$ is $\Gtheta$-equivariant for the isotropy representation on $\mr$ and conjugation on $Ab^a(\mr)$. 
\end{rk}
In what follows we present an alternative approach to cameral covers, following \cite{DG}.
\begin{defi}\label{def:min_aniso}
A parabolic subalgebra $\lp\subset \g$ is called minimal $\theta$-anisotropic if it is opposed to $\theta(\lp)$ and contains a $\theta$-invariant Cartan subalgebra with maximal intersection with $\m$. Namely, such objects are $\Gtheta$ or $H$ conjugate to parabolic subalgebras of the form
$$
\p= \laC\oplus\lc_{\h}(\la)\oplus\bigoplus_{\lambda\in \Lambda(\la)^+}\g_\lambda,
$$
where $\Lambda(\la)^+$ denotes a set of positive restricted roots. When $\g_{\R}\subset\g$ is quasi-split, since minimal $\theta$-anisotropic parabolic subalgebras are Borel subalgebras.
\end{defi}
\begin{prop}\label{prop:var_min_par}
The variety $H/\CH\cong\Gtheta/\CH_\theta$ parametrises minimal $\theta$-anisotropic parabolic subalgebras of $\g$.
\end{prop}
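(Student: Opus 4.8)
The plan is to realise the variety as a single homogeneous orbit and then compute point stabilisers. Since $G_\R$ is quasi-split, condition (QS1) forces $\lc_\h(\la)=\ld$, so the standard minimal $\theta$-anisotropic parabolic of Definition \ref{def:min_aniso} collapses to the $\theta$-anisotropic Borel subalgebra $\lb=\lt\oplus\bigoplus_{\lambda\in\Lambda(\la)^+}\g_\lambda$; by that same definition every minimal $\theta$-anisotropic parabolic is $H$- (equivalently $\Gtheta$-) conjugate to $\lb$. Thus both $H$ and $\Gtheta$ act transitively on the set in question, and the whole content is to identify the stabiliser of $\lb$ with $\CH$ (resp. $\CH_\theta$) and to invoke the orbit--stabiliser identification, which is an isomorphism of varieties in characteristic zero.

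First I would compute $N_H(\lb)$. A Borel subalgebra is self-normalising, so $N_G(\lb)=B$ and therefore $N_H(\lb)=B\cap H=B^\theta$. Anisotropy means $\theta(B)=B^{op}$, so any $\theta$-fixed $b\in B$ lies in $B\cap\theta(B)=B\cap B^{op}=T$, whence $B^\theta=T^\theta=T\cap H$. It then remains to see $T\cap H=\CH$: the inclusion $T\cap H\subseteq\CH$ is immediate since $T$ centralises $\lt\supseteq\la$, and for the converse I would use quasi-splitness via Remark \ref{rk mr c gr}, namely that a generic $a_0\in\la$ is regular semisimple in $\g$, so its centraliser is the maximal torus $T$; hence $C_G(\la)=C_G(a_0)=T$ and $\CH=T\cap H$. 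This gives $H/\CH\cong H\cdot\lb$.

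For $\Gtheta$ I would use the factorisation $\Gtheta=FH$ of Lemma \ref{lm exact seq Gtheta}, with $F\subseteq\exp(\la)\subseteq T\subseteq B$. Since $F$ fixes $\lb$, the $\Gtheta$-orbit of $\lb$ coincides with its $H$-orbit, and the stabiliser is $N_{\Gtheta}(\lb)=\Gtheta\cap B=F\cdot(B\cap H)$. A one-line check (writing $fh\in T$ with $f\in F$, $h\in H$ forces $h\in T\cap H$) identifies this with $\Gtheta\cap T=C_{\Gtheta}(\la)=\CH_\theta=\Ttheta$. Orbit--stabiliser then yields $\Gtheta/\CH_\theta\cong\Gtheta\cdot\lb=H\cdot\lb\cong H/\CH$; equivalently, the inclusion $H\hookrightarrow\Gtheta$ induces the stated isomorphism, since $H\cap\CH_\theta=H\cap T=\CH$ gives injectivity and $\Gtheta=\CH_\theta H$ gives surjectivity.

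The main obstacle I expect is the stabiliser identification $N_H(\lb)=\CH$, and in particular the reduction $C_G(\la)=T$: this is precisely where quasi-splitness is essential (a purely imaginary root would enlarge $\lc_\h(\la)$ beyond $\ld$ and break the identification $\p=\lb$), and where one must guard against possible disconnectedness of the centralisers. A secondary point I would make explicit is that these orbit maps are genuine isomorphisms of varieties rather than mere bijections; in characteristic zero this is standard, the relevant orbit being the smooth fixed-point locus of the involution induced by $-\theta$ on the flag variety.
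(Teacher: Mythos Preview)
Your argument is correct. The isomorphism $H/\CH\cong\Gtheta/\CH_\theta$ is handled in essentially the same way as in the paper: the paper records the commutative diagram of short exact sequences
\[
\begin{array}{ccccc}
\CH & \hookrightarrow & \CH_\theta & \twoheadrightarrow & F^2\\
\downarrow & & \downarrow & & \parallel\\
H & \hookrightarrow & \Gtheta & \twoheadrightarrow & F^2
\end{array}
\]
and reads off the isomorphism of cokernels, which is exactly your ``$H\cap\CH_\theta=\CH$ for injectivity, $\Gtheta=\CH_\theta H$ for surjectivity'' unpacked.

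Where you genuinely diverge is in the parametrisation statement itself. The paper simply cites \cite[Proposition~5]{Vust} for the fact that $H/\CH$ parametrises minimal $\theta$-anisotropic parabolics, whereas you compute the stabiliser $N_H(\lb)$ by hand: $N_G(\lb)=B$ by self-normalisation, $B\cap H=B^\theta\subseteq B\cap B^{op}=T$ by anisotropy, and $T\cap H=\CH$ by the quasi-split regularity of a generic $a_0\in\la$. Your route is more elementary and self-contained, and it makes transparent exactly where quasi-splitness enters (collapsing $\lp$ to a Borel and forcing $C_G(\la)=T$); the paper's citation covers the general symmetric-space case but hides this. Since Definition~\ref{def:min_aniso} already builds in $H$-transitivity, your reduction to a stabiliser computation is entirely legitimate, and the disconnectedness concerns you flag are harmless here because $G$ is connected so $N_G(\lb)=B$ on the nose.
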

\begin{proof}
	The fact that $H/\CH$ parametrises minimal $\theta$-anisotropic parabolic subalgebras of $\g$ follows from \cite[Proposition 5]{Vust}. The isomorphism is a consequence of the morphism  exact sequences defined by the following commutative diagram:
	$$
	\xymatrix{
		\CH\ar[r]\ar[d]&\CH_\theta\ar[r]\ar[d]&F^2\ar@{=}[d]\\
		H\ar[r]&\Gtheta\ar[r]&F^2}
	$$
\end{proof}
\begin{prop}\label{prop:ref_cent_in_aniso}
	Any element $x\in\mr$ satisfies that $\lc_\m(x)\subset\p$ for some $\p\in H/\CH$.
\end{prop}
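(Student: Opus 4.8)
The plan is to reduce the statement to the case of a regular nilpotent element inside the centraliser of the semisimple part, and then build the anisotropic parabolic from a principal normal triple. First I would normalise the semisimple part: by Proposition \ref{prop ss conjugate to a} the semisimple part $x_s$ of the Jordan decomposition $x=x_s+x_n$ (with $x_s,x_n\in\m$, cf. \cite[Proposition 3]{KR71}) is $H$-conjugate into $\la$, and since both the assertion and the $H$-orbit $H/\CH$ of minimal $\theta$-anisotropic parabolics are $H$-equivariant, I may assume $x_s\in\la$. Then $\lie{l}:=\lc_\g(x_s)$ is a $\theta$-stable Levi subalgebra containing $\lt$, and because $\la\subset\lie{l}$ stays maximal abelian in $\lie{l}\cap\m$ while $\lc_{\lie{l}\cap\h}(\la)\subseteq\lc_\h(\la)=\ld$ is abelian, the pair $(\lie{l},\lie{l}\cap\h)$ is again quasi-split with the same maximal split torus $\la$. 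Writing $\lc_\m(x)=\lc_{\lie{l}\cap\m}(x_n)$ and noting that regularity of $x$ in $\g$ forces $x_n$ to be regular nilpotent in $\lie{l}$, the problem reduces to showing that the $\m$-centraliser of a regular nilpotent of a quasi-split symmetric pair lies in one of its minimal $\theta$-anisotropic Borel subalgebras.

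For this reduced case I would attach to $x_n$ a principal normal $\mathfrak{sl}_2$-triple $\{x_n,H_0,y\}$ with $x_n,y\in\lie{l}\cap\m$ in the sense of Kostant--Rallis. The grading defined by $\ad(H_0)$ yields a parabolic $\lie{q}=\bigoplus_{j\geq 0}\lie{l}_j$ of $\lie{l}$, and by $\mathfrak{sl}_2$-representation theory the centraliser of the principal nilpotent is spanned by highest-weight vectors, so $\lc_{\lie{l}}(x_n)\subset\lie{q}$ and hence $\lc_{\lie{l}\cap\m}(x_n)\subset\lie{q}\cap\m$. The remaining point is to realise this containment inside a \emph{minimal $\theta$-anisotropic} Borel of $\lie{l}$ rather than merely inside $\lie{q}$: I would choose an anisotropic ordering of the restricted roots of $(\lie{l},\la)$ compatible with the normal triple and verify, using Definition \ref{def:min_aniso}, \cite[Proposition 5]{Vust} and the explicit normal form of \cite[Theorem 11]{KR71}, that the associated anisotropic Borel $\lb_{\lie{l}}$ absorbs $\lc_{\lie{l}\cap\m}(x_n)$. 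Finally I would inflate back to $\g$ by taking $\p=\la\oplus\lc_\h(\la)\oplus\bigoplus_{\lambda>0}\g_\lambda$, choosing the positive system on the restricted roots not vanishing on $x_s$ so that $\theta(\p)=\p^{op}$ and $\p\cap\lie{l}=\lb_{\lie{l}}$; this is a genuine minimal $\theta$-anisotropic parabolic, i.e. a point of $H/\CH$ by Proposition \ref{prop:var_min_par}, and it satisfies $\lc_\m(x)=\lc_{\lie{l}\cap\m}(x_n)\subset\lb_{\lie{l}}\subset\p$.

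The hard part will be the middle step. The characteristic $H_0$ of a normal triple necessarily lies in $\h$, so $\ad(H_0)$ commutes with $\theta$ and the grading parabolic $\lie{q}$ is $\theta$-\emph{stable}, whereas the target $\p$ must be $\theta$-anisotropic, i.e. $\theta(\p)=\p^{op}$. Reconciling this tension — producing an anisotropic Borel of $\lie{l}$ that still contains the whole regular-nilpotent centraliser $\lc_{\lie{l}\cap\m}(x_n)$, rather than only the $\theta$-stable one coming directly from $\ad(H_0)$ — is the delicate technical point, and it is where I would expect the argument to require genuine input from the Kostant--Rallis structure of the restricted root string through $x_n$; the semisimple and the split cases, by contrast, are immediate because there $\lc_\m(x)$ is the maximal split torus and is contained in every anisotropic Borel through $\la$.
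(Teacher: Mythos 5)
Your proposal stalls exactly where you say it does, and the obstruction is not a delicate point that a better choice of ordering can fix: the reduced statement you aim at in the middle step is false as you have formulated it. If $x_n$ is a regular nilpotent of the quasi-split pair $(\lie{l},\theta|_{\lie{l}})$, then $x_n\in\lc_{\lie{l}\cap\m}(x_n)$, so any Borel subalgebra of $\lie{l}$ containing that centraliser must contain $x_n$ itself; a regular nilpotent lies in a \emph{unique} Borel subalgebra $\lb_{x_n}$; and since $\theta(x_n)=-x_n$, this unique candidate satisfies $\theta(\lb_{x_n})=\lb_{x_n}$. A $\theta$-stable Borel containing a nonzero nilpotent can never be $\theta$-anisotropic, since being opposed to its own $\theta$-image would force it to intersect itself in a Cartan. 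Concretely, in the coordinates of Example \ref{ex orbits sl2} (split $\SL(2,\R)$, $\h$ the diagonal torus, $\m$ the off-diagonal matrices): $\lc_\m(E_{12})=\C E_{12}$ lies only in the upper-triangular Borel, which is $\theta$-stable, whereas the minimal $\theta$-anisotropic Borels are exactly the stabilisers of lines $[a:b]$ with $ab\neq 0$. This also contradicts your closing remark that the split case is immediate: the split form with nilpotent Higgs field is precisely where the strategy breaks. So no ``anisotropic ordering compatible with the normal triple'' exists, and the proposal does not prove the proposition.

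The paper's own proof takes a genuinely different route: no Jordan decomposition or Levi reduction, but the Kostant--Rallis maximal split subalgebra $\tilde\g\subset\g$, in which every $x\in\mr$ lies with unchanged centraliser; one then takes a $\theta$-invariant Borel of $\tilde\g$ containing $\lc_\m(x)$ and ``promotes'' it to a minimal $\theta$-anisotropic parabolic of $\g$. Note, however, that this promotion meets exactly the degeneration you identified (in the split case $\tilde\g=\g$ and the promotion does nothing), and the only reading of the statement consistent with its later use (Definition \ref{def:HC}, Proposition \ref{prop:alternative_cam_cover}) is that $\p$ is allowed to run over the partial compactification $\HC$, i.e.\ over \emph{limits} of minimal $\theta$-anisotropic parabolics, which over nilpotent points of $\mr$ are $\theta$-stable parabolics such as $\lb_{x_n}$. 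In other words, the tension you spotted is real and is the heart of the matter, but it cannot be resolved by a cleverer choice inside the orbit $H/\CH$ of honest anisotropic parabolics; it is resolved by compactifying that family, and any complete argument --- by your route through the Levi $\lc_\g(x_s)$ or by the paper's route through $\tilde\g$ --- has to make that limiting interpretation explicit.
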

\begin{proof}
	By \cite{KR71}, any such element belongs to $\tilde{\g}$, where $\tilde{\g}\subset\g$ is a maximal split subalgebra, inside of which its centraliser remains the same. Thus, for some  Borel subalgebra $\lb\subset\tilde{\g}$, which may be easily chosen to be $\theta$-invariant, $\lc_\m(x)\subset\lb$. Since any such can be promoted to a minimal $\theta$-anisotropic parabolic subalgebra, we are done.
\end{proof}
\begin{defi}\label{def:HC}
	Define the group scheme $\HC$ to be the incidence variety inside $\HN\times H/\CH$. 
\end{defi}
\begin{prop}\label{prop:alternative_cam_cover}
	Let $G_{\R}<G$ be a quasi-split real form. Then, the choice of a $\theta$-anisotropic Borel subalgebra $\laC\subset\lb_0$ determines an isomorphism
	$$
\mr\times_{\HN}\HC\cong\mr\times_{\la/W(\la)}\laC.
	$$
\end{prop}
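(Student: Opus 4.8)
The plan is to realise both sides as the $-\theta$-fixed loci of the corresponding complex (Grothendieck--Springer) objects and to deduce the isomorphism from the complex statement, Fact \ref{fact:springer_res}, by passing to fixed points. Recall that $-\theta$ acts on $\gr$, on $\GN$ by $\lc\mapsto\lc^{-\theta}$, on $\GT$ as in \eqref{eq:Theta_GT}, and on $\lt$ and $\lt/W$ through $x\mapsto -\theta x$; on $\m$, and hence on $\mr$, it acts as the identity because $\theta|_{\m}=-1$. Its fixed loci on the torus side are
$$
(\lt)^{-\theta}=\laC,\qquad (\lt/W)^{-\theta}=\la/W(\la),
$$
the first since $\theta=+1$ on $\ld$ and $-1$ on $\laC$, and the second as recorded in the proof of Proposition \ref{thm compare sheaves} \eqref{descent theta J}.

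First I would restrict Fact \ref{fact:springer_res} from $\gr$ to the $-\theta$-stable subscheme $\mr$. Using the embedding $\iota\colon\la/W(\la)\hookrightarrow\lt/W$ of Lemma \ref{lm compare orbits} to factor the Chevalley map of $\mr$, this produces a canonical isomorphism $\mr\times_{\GN}\GT\cong\mr\times_{\lt/W}\lt$ over $\mr$. Since it is built functorially from the centraliser map $c\colon\gr\to\GN$ and the Grothendieck--Springer resolution, it is $-\theta$-equivariant for the actions just described. I would then take $-\theta$-fixed points on both sides. As $-\theta$ fixes $\mr$ pointwise and fixed points commute with fibre products, the right-hand side becomes
$$
\left(\mr\times_{\lt/W}\lt\right)^{-\theta}\;\cong\;\mr\times_{(\lt/W)^{-\theta}}(\lt)^{-\theta}\;=\;\mr\times_{\la/W(\la)}\laC,
$$
which is exactly the target of the proposition.

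For the left-hand side the same principle gives $\left(\mr\times_{\GN}\GT\right)^{-\theta}\cong\mr\times_{\GN^{-\theta}}\GT^{-\theta}$, where $\mr\to\GN^{-\theta}$ sends $x$ to the $\theta$-stable centraliser $\lc_{\g}(x)$. The assignment $\lc_{\g}(x)\mapsto\lc_{\m}(x)=\lc_{\g}(x)\cap\m$ identifies $\GN^{-\theta}\cong\HN$, turning this into $\mr\times_{\HN}\GT^{-\theta}$, and Lemma \ref{lm:fixed_pts_GN} together with Proposition \ref{prop:var_min_par} then yields $\mr\times_{\HN}\GT^{-\theta}\cong\mr\times_{\HN}\HC$, the fibrewise content being $(G/B)^{-\theta}\cong H/\CH$, the variety of $\theta$-anisotropic Borel subalgebras. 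The choice of the anisotropic Borel $\lb_0\supset\laC$ is precisely what rigidifies these identifications and the embedding $\iota$. Chaining the three isomorphisms gives $\mr\times_{\HN}\HC\cong\mr\times_{\la/W(\la)}\laC$.

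The main obstacle is to legitimise the step ``fixed points commute with fibre products'' at the scheme level over the non-regular-semisimple locus, where $\GT\to\GN$ is genuinely ramified and fixed-point schemes can carry subtle structure. Set-theoretically the matter is immediate, since a $-\theta$-fixed pair $(x,\xi)$ forces $\xi\in\GT^{-\theta}$ with image in $\GN^{-\theta}$, so the real content is reducedness and flatness. In characteristic zero the fixed locus of the $\Z_2$-action is smooth, and both covers are finite over the normal scheme $\mr$; I would therefore verify the isomorphism on the dense open semisimple locus $\mrs$ — where Proposition \ref{prop ss conjugate to a} exhibits both sides as the $W(\la)$-torsor of conjugates of $x$ into $\laC$, with deck group $N_H(\la)/C_H(\la)=W(\la)$ — and extend across the ramification by normality, using the Kostant--Rallis section of Proposition \ref{prop KR} to control the boundary.
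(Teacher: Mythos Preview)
Your approach is correct in outline and genuinely different from the paper's. The paper does not pass through the complex Grothendieck--Springer isomorphism at all: it writes down an explicit morphism
\[
\mr\times_{\HN}\HC\ni(x,\lb)\longmapsto\bigl(x,\pi_{\lb}(x)\bigr)\in\mr\times_{\la/W(\la)}\laC,
\]
where $\pi_{\lb}$ is the composite $\lb\to\lb/[\lb,\lb]\cong\laC\oplus\lc_{\h}(\la)\twoheadrightarrow\laC$ (the isomorphism being fixed by the choice of $\lb_0$), and then observes that over the semisimple locus $\mrs$ both sides are $W(\la)$-torsors, so the map is an isomorphism there; density and regularity finish the argument. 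Your route instead realises the statement as the $-\theta$-fixed locus of Fact~\ref{fact:springer_res}, which is conceptually attractive and fits the paper's broader theme of recovering real objects as fixed points of complex ones. What the paper's argument buys is an explicit formula and no need to control fixed-point schemes over the ramification; what yours buys is a direct explanation of \emph{why} the real and complex cameral pictures match.

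One point to tighten: the assertion ``$\GN^{-\theta}\cong\HN$'' is stronger than what you need and is not obviously true as stated, since $\theta$-stable regular centralisers in $\g$ can arise from regular elements of $\h$ just as well as from $\mr$. You only use that the composite $\mr\to\GN$ lands in $\GN^{-\theta}$ and that over this image the fibre of $\GT^{-\theta}$ agrees with that of $\HC$; this is exactly the content of Lemma~\ref{lm:fixed_pts_GN}, so you can drop the global identification and invoke that lemma directly. With that adjustment, your scheme-theoretic justification (smoothness of fixed loci in characteristic zero, both sides finite over the normal base $\mr$, isomorphism on the dense open $\mrs$) is sound and parallels the paper's own extension step.
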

\begin{proof}
	By regularity of $\mr$ it is enough to prove that both schemes admit a morphism which makes them isomorphic over a dense open set. Define the morphism
	$$
	\mr\times_{\HN}\HC\ni(x,\lb)\mapsto (x,\pi_{\lb}(x))\in \mr\times_{\la/W(\la)}\laC
	$$
	where $\pi_{\lb}(x)\in\laC$ is the image of the class of $x$ in $\lb/[\lb,\lb]$ under the canonical isomorphism $\lb/[\lb,\lb]\cong\laC\oplus\lc_{\h}(\la)$, which is $\theta$ equivariant and so well defined.

	By Remark \ref{rk:open_HN}, over $H/\NH$, the above morphism is an isomorphism, as both schemes are $W(\la)$-principal bundles over $\mrs$.
	
\end{proof}

\end{document}